\documentclass[pdflatex,sn-mathphys-num]{sn-jnl}

\usepackage{graphicx}%
\usepackage{multirow}%
\usepackage{amsmath,amssymb,amsfonts}%
\usepackage{amsthm}%
\usepackage{mathrsfs}%
\usepackage[title]{appendix}%
\usepackage{xcolor}%
\usepackage{textcomp}%
\usepackage{manyfoot}%
\usepackage{booktabs}%
\usepackage{algorithm}%
\usepackage{algorithmicx}%
\usepackage{algpseudocode}%
\usepackage{listings}%

\newtheorem{theorem}{Theorem}
\newtheorem{proposition}{Proposition}%
\newtheorem{example}{Example}%
\newtheorem{remark}{Remark}%

\newtheorem{definition}{Definition}%
\newtheorem{lemma}{Lemma}%

\begin{document}

\title[Article Title]{On Optimality Conditions for Mathematical Programming Problems Based on Strong Subdifferentials}

\author[1]{\fnm{Felipe} \sur{Lara}}\email{flarao@academicos.uta.cl}

\author*[2]{\fnm{Alberto} \sur{Ramos}}\email{aramosf@academicos.uta.cl}

\equalcont{These authors contributed equally to this work.}

\affil[1]{\orgdiv{Instituto de Alta Investigación (IA)}, \orgname{Universidad de Tarapacá}, \state{Arica}, \country{Chile}}

\affil*[2]{\orgdiv{Facultad de Ciencias}, \orgname{Universidad de Tarapacá}, \state{Arica}, \country{Chile}}

\abstract{We develop refined Karush-Kuhn-Tucker (KKT) and Fritz-John (FJ)-type optimality conditions for nonsmooth, nonconvex mathematical pro\-gra\-mming problems. We pay special attention in the case that the functional constraint belongs to a specific class of generalized convex functions known as strongly quasiconvex functions. After analyzing a specialized sub\-di\-ffe\-ren\-tial, named the strong subdifferential, we compute the normal cone of the supremum function in terms of such subdifferentials, and apply this result to the mathematical programming problem.  
We illustrate our important results by examples.}

\keywords{Nonconvex nonsmooth optimization; generalized convexity; KKT optimality; FJ optimality}



\maketitle

\section{Introduction}

Quasiconvex functions are characterized by having convex sublevel sets, a fundamental property with significant implications in economic theory. This concept is particularly relevant in consumer preference theory, notably due to Gerard Debreu's seminal work \cite{D-1959}, where quasiconvexity mathematically formalizes the natural assumption of a {\it tendency to diversification} in consumer behavior.  

From a mathematical perspective, functions with convex sublevel sets play a crucial role in ensuring existence of minimizers, deriving optimality conditions, and designing iterative algorithms. 
The key insight is that minimizing a function over the entire space is equivalent to minimizing it over any of its sublevel sets, provided that these sets are convex. This property simplifies optimization problems by restricting the search domain while preserving global optimality.   

In the differentiable case, quasiconvex functions exhibit another outstanding property: they are decreasing on their sublevel sets. This characteristic has significant implications, one of the most notable being that KKT optimality conditions become sufficient under quasiconvexity assumptions on both the constraint and objective functions. These foundational results were first established in the seminal work of Arrow and Enthoven \cite{AE} and later refined by various authors, including Mangasarian in \cite{Manga}.  

For decades, quasiconvexity, KKT-type optimality conditions, and economic applications have been closely intertwined, leading to widespread applications across diverse fields such as engineering, finance, management, and computer science, among others. 
However, a major limitation of quasiconvex functions is that they may exhibit flat regions outside their set of minimizers, posing challenges for optimization algorithms. In such cases, iterative methods often stagnate at critical points that are far from the true minimizers.   
To address this issue, several subclasses of quasiconvex functions have been proposed in the literature, but until now, linear convergence of standard first-order methods, such as gradient descent \cite{LMV} and proximal-point-type algorithms \cite{IL-7,ILMY,Lara-9}, has only been guaranteed for strongly quasiconvex functions. This class, introduced by Polyak \cite{P} in the context of minimizing sequences for extremum problems, has gained considerable attention in recent years (see, e.g., \cite{NS,Pi,Tran}).  

In this paper, we develop KKT and FJ optimality conditions for nonsmooth nonconvex mathematical programming problems under the assumption that the constraint functions are strongly quasiconvex. 
Our KKT and FJ optimality conditions are described in terms of the strong subdifferential, a specialized tool introduced to analyzing nonsmooth strongly quasiconvex functions see \cite{Kab-Lara-2, Lara-9},  
since it can be nonempty in situations where classical subdifferentials (as limiting, or regular) are empty sets, and thus provides some useful information. 
 


The structure of the paper is as follows. In Section \ref{sec:2}, we recall basic notions from variational analysis and generalized convexity. In Section \ref{sec:03}, we present new properties and calculus rules for the strong subdifferential and its relationship with other well-known subdifferentials for strongly quasiconvex functions. 
In Section \ref{sec:04}, we establish a friendly description of the normal cone of the 0-level set of the supremum function, in terms of strong subdifferentials. 
In Section \ref{sec:FJKKT}, we give our KKT and FJ type optimality conditions. Several examples are given to compare our results with the existing literature. 
Finally, conclusions and future research directions are described in Section \ref{sec:conclusion}.

\section{Preliminaries}\label{sec:2}

\subsection{Basic tools on variational analysis}\label{subsec:variationalanalysis}

The inner product in $\mathbb{R}^{n}$ and the Euclidean norm are denoted by $\langle \cdot,\cdot \rangle$ and $\lVert \cdot \rVert$, respectively. The set $]0, + \infty[$ is denoted by $\mathbb{R}_{++}$. 
For $a\in \mathbb{R}^{n}$, we denote $a^+:=\max\{a,0\}$ its non-negative part, where the maximum is taken componentwise. 
Set $I$ a finite index set, the sum $\sum_{j \in I}b_{j}=0$ if $I=\emptyset$.
Given $A$ and $B$ subsets of $\mathbb{R}^{n}$, $A+B$ is the Minkowski sum. 
We set $A+\emptyset=\emptyset$.

For a set $C \subset \mathbb{R}^{n}$, we use $\text{int}\,C$, $\text{cl}\,C$ and $\text{co}\,C$ to denote its interior, closure and convex hull respectively. 
We denote by $\delta_{C}$ the indicator function, i. e, $\delta_{C}(x)=0$, if $x \in C$ and $\delta_{C}(x)=\infty$ otherwise.
For a cone $K$, its {\it polar} is $K^{\circ}$ is $\{z \in \mathbb{R}^{n} \mid \, \langle z, y \rangle \leq 0, \, \forall ~ y \in K\}$.
For a given subset $C \subset \mathbb{R}^{n}$ not necessarily convex, we set $C^{\infty} := \{v \in \mathbb{R}^{n} \mid \, \exists \, t_{k} \rightarrow 0^{+}, \, \exists \, \{v^{k}\}_{k} \subset C ~ \text{ with} ~ t_{k} v^{k} \rightarrow v\}$.

Given an extended-valued function $h: \mathbb{R}^{n} \rightarrow \overline{\mathbb{R}} := \mathbb{R} \cup \{\pm \infty\}$, the effective domain of $h$ is ${\rm dom}\,h := \{x \in \mathbb{R}^{n} \mid h(x) < + \infty \}$, and $h$ is proper if ${\rm dom}\,h$ is nonempty and $h(x) > - \infty$ for all $x \in \mathbb{R}^{n}$. 
We set by ${\rm argmin}_{\mathbb{R}^{n}} h$ the set of all minimal points of $h$. 


For a given set-valued mapping $\mathcal{K}: \mathbb{R}^s\rightrightarrows\mathbb{R}^n$, the \textit{sequential Painlevé-Kuratowski outer/upper limit} of $\mathcal{K}(z)$ as $z\rightarrow z^*$ is defined as 
\begin{equation*}
\limsup_{z \to z^*} \mathcal{K}(z) = \{y^*\in \mathbb{R}^n \mid \exists \, (z^k, y^k) \to (z^*, y^*)  \text{ with } y^k \in \mathcal{K} (z^k), ~ \forall ~ k \in \mathbb{N} \}, 
\end{equation*}
and the \textit{sequential Painlevé-Kuratowski inner limit} of $\mathcal{K}(z)$ as $z\rightarrow z^*$ by
\begin{equation*}
\liminf_{z \to z^*} \mathcal{K}(z) = \{y^*\in \mathbb{R}^n \mid \forall \, z^k \to z^*, \exists \, y^k \to y^*  \text{ such that } y^k \in \mathcal{K} (z^k), \ \forall ~ k \in \mathbb{N} \}.
\end{equation*}

In the following, we recall some basic concepts in variational analysis. 
Given a lower semicontinuous (lsc henceforth) function $h$, the {\it regular subdifferential} of $h$ at $\bar{x} \in \text{dom }h$ is the set
\begin{equation}
 \widehat{\partial} h(\overline{x}) := \{ v \in \mathbb{R}^{n} \mid h(y) \geq h(\overline{x}) + \langle v, y - \overline{x} \rangle+o(\|y-\overline{x}\|), ~ \forall ~ y \in \mathbb{R}^{n}\},
\end{equation}
with $\partial h(x) = \emptyset$ if $x \not \in \mathrm{dom}\,h$.
The {\it Fenchel-Moreau subdifferential} of $h$ at $\bar{x}$ is given by 
   \begin{align*}
\partial^{FM} h(\overline{x}):=
\{ v \in \mathbb{R}^{n}: h(y) \geq h(\overline{x}) + \langle v, y - \bar{x} \rangle, \ \forall y\in \mathbb{R}^{n}\},     
   \end{align*} 
   with $\partial^{FM} h(x) = \emptyset$ if $x \not \in \mathrm{dom}\,h$.
Clearly, $\partial^{FM}h(x)\subset \widehat{\partial}h(x)$, $\forall x$.  
The {\it limiting subdifferential} of $h$ at $\bar{x}$ is defined as 
\begin{align*}
 \partial h(\bar{x}) = \{ v \in \mathbb{R}^n \mid \, \exists \, x^{k} \rightarrow_{h} \bar{x}, \exists \, v^{k} \rightarrow v \text{ with } v^{k} \in \widehat{\partial} h(x^{k}) \}      
\end{align*}
and the {\it horizon subdifferential } of $h$ at $\bar{x}$ is defined by
\begin{align*}
 \partial^{\infty} h(\bar{x}) = \{ v \in \mathbb{R}^n \mid \, \exists \, x^{k} \rightarrow_{h} \bar{x}, \exists \, v^{k} \rightarrow v, \exists \, t_{k} \rightarrow 0^{+} \text{ with } v^{k}\in \widehat{\partial} h(x^{k}); \ t_{k} v^k \rightarrow v \},       
\end{align*}
where $x^{k} \rightarrow_{h} \bar{x}$ means that $x^{k} \rightarrow \bar{x}$ and $h(x^{k})  \rightarrow h(\bar{x})$.

We point out that a lower lsc function $h$ is locally Lipschitz continuous at $\bar{x}$ if and only if $\partial^{\infty} h(\bar{x})=\{0\}$, see \cite[Theorem 9.13]{rwets}. 
If $h$ is strict differentiable function at $\bar{x}$, we get $\partial h(\bar{x})=\widehat{\partial} h(\bar{x})=\{\nabla h(\bar{x})\}$. 
Furthermore, we say that a lsc function $h$ is {\it subdifferential regular} at $\bar{x}$ when $\partial h(\bar{x}) = \widehat{\partial} h(\bar{x})$.

Given a closed set $\mathcal{X} \subset \mathbb{R}^n$ and 
$z \in \mathcal{X}$, the {\it tangent cone}
to $\mathcal{X}$ at $z$ is 
     \begin{equation*}
 T(\mathcal{X}, z):=\{d \in \mathbb{R}^{s} \mid
\exists \, t_{k} \downarrow 0, \
d_{k}\to d \text{ with } z+t_{k}d_{k} \in \mathcal{X}, k \in \mathbb{N}\}.
     \end{equation*}

The {\it regular normal cone} to $\mathcal{X}$ at $z$ is $ \widehat{N}(\mathcal{X}, z):=\widehat{\partial} \delta_{\mathcal{X}}(z)$ and the {\it limiting normal cone } to $\mathcal{X}$ at $z$ is $N(\mathcal{X}, z):=\partial \delta_{\mathcal{X}}(z)$. Furthermore, by \cite{rwets}, we have $\widehat{N}(\mathcal{X}, z)=T(\mathcal{X}, z)^{\circ}$.
The set of feasible directions is $D(\mathcal{X}, z):=\mathbb{R}_{+}(\mathcal{X}-z)$.
  
\subsection{Generalized Convexity}\label{subsec:strongquasiconvex} 

In this subsection, we recall some basic definitions of generalized convexity, including the notion of quasiconvex functions. 
We pay special attention to strongly quasiconvex functions, since they form an important family of functions where numerical methods such as gradient and proximal point algorithms can be applied properly (see \cite{Lara-9,LMV}).

Consider a function $h$ with convex domain. We say that $h$ is 
\begin{itemize}
 \item[$(a)$] convex if $h(\lambda x + (1-\lambda)y) \leq \lambda h(x) + (1 - \lambda) h(y)$ for all $\lambda \in [0, 1]$ and all $x, y \in \mathrm{dom}\,h$.

 \item[$(b)$] strongly convex on ${\rm dom}\,h$ with modulus $\gamma >0$ if for all $x, y \in \mathrm{dom}\,h$ and all
 $\lambda \in[0, 1]$, we have $ h(\lambda y + (1-\lambda)x) \leq \lambda h(y) + (1-\lambda) h(x)-\lambda (1 - \lambda) \frac{\gamma}{2} \lVert x - y \rVert^{2}$.

 \item[$(c)$] quasiconvex if $h(\lambda x + (1-\lambda) y) \leq \max \{h(x), h(y)\}$ for all $\lambda \in [0, 1]$ and all $x, y \in \mathrm{dom}\,h$.

 \item[$(d)$] strongly quasiconvex \cite{P} on ${\rm dom}\,h$ with modulus
 $\gamma \geq 0$ if for all $x, y \in \mathrm{dom}\,h$
 and all $\lambda \in[0, 1]$, we have
 \begin{equation}\label{strong:quasiconvex}
  h(\lambda y + (1-\lambda)x) \leq \max \{h(y), h(x)\} - \lambda(1 -
  \lambda) \frac{\gamma}{2} \lVert x - y \rVert^{2}. 
 \end{equation}
 \noindent It is said that $h$ is strictly convex (resp. strictly quasiconvex) if the inequality in the definition is strict whenever $x \neq y$.
 \end{itemize}
 The relationship between all these notions is summarizing below:
 \begin{align}\label{scheme}
  \begin{array}{ccccccc}
  {\rm strongly ~ convex} & \Longrightarrow & {\rm strictly ~ convex} &
  \Longrightarrow & {\rm convex} \notag \\
  \Downarrow & \, & \Downarrow & \, & \Downarrow \\
  {\rm strongly ~ qcx}\ (\gamma>0) & \Longrightarrow & {\rm strictly ~ qcx} &
  \Longrightarrow & {\rm qcx} 
  \end{array}, 
 \end{align}

where, we denote quasiconvex by qcx.
All the reverse statements may not hold. 
Indeed, $h_{1} (x) = \sqrt{\lVert x \rVert}$ is strongly quasiconvex on any bounded convex set on $\mathbb{R}^{n}$ (see \cite[Theo\-rem 17]{Lara-9}) without being convex  and $h_{2} (x) = \frac{x}{1 + \lvert x \rvert}$ is strictly quasiconvex without being strongly quasiconvex on $\mathbb{R}$. 
Other counterexamples are well-known (see \cite{CM-Book,HKS}).


\begin{remark}\label{rem:exam}
An important family of strongly quasiconvex is given by the quotient of quadratic functions with applications in fractional mathematical programming problems. 
We have the following statement as consequence of \cite[Proposition 4.1]{ILMY}.

 Let $A, B \in \mathbb{R}^{n\times n}$, $a, b \in \mathbb{R}^{n}$, $\alpha, \beta \in \mathbb{R}$, and $h: \mathbb{R}^{n} \rightarrow \mathbb{R}$ be the function given by:
  \begin{equation}
   h(x) = \frac{f(x)}{g(x)} = \frac{\frac{1}{2} \langle Ax, x \rangle + \langle a, x \rangle + \alpha}{\frac{1}{2} \langle Bx, x \rangle + \langle b, x \rangle + \beta}.
  \end{equation}
  Take $m, M>0$ such that $0 < m < M$. We set $K := \{x \in \mathbb{R}^{n}: ~ m \leq g(x) \leq M\}$. 
   
   Suppose that $A$ is positive definite and at least one of the following conditions holds:
  \begin{enumerate}
   \item[$(a)$] $B = 0$ (the null matrix),
   \item[$(b)$] $f$ is nonnegative on $K$ and $B$ is negative semidefinite,
   \item[$(c)$] $f$ is nonpositive on $K$ and $B$ is positive semidefinite,
 \end{enumerate}
  then $h$ is strongly quasiconvex on $K$ with modulus $\gamma =\lambda_{\min} (A)/M$ where $\lambda_{\min} (A)$ is the minimum eigenvalue of $A$.
\end{remark}

We recall that $\bar{x}$ is a $\gamma$-strong minimum if $h(x)\geq h(\bar{x})+\gamma\|x-\bar{x}\|^{2}$ for all $x \in K$ \cite[Definition 22]{Kab-Lara-2}. 
For a further study on generalized convexity and strongly quasiconvex functions, we refer to \cite{AE,ADSZ,CM-Book,HKS,Kab-Lara-2,Lara-9,LMV,Manga} and references therein.

Following \cite{AusselSVVA2015}, we proceed by defining the limiting sublevel sets and limiting normal operator maps, useful tools to study quasiconvex functions. 
For a proper function $g: \mathbb{R}^{n} \rightarrow \overline{\mathbb{R}}$ and $x \in {\rm dom}\,h$, the sublevel set of $g$ at $g(x)$, the strict sublevel of $g$ at $g(x)$ and the {\it limiting sublevel} of $g$ at $g(x)$ are defined by $S_{g} (x) := \{y \in \mathbb{R}^{n}: \, g(y) \leq g (x)\}$, $S^{<}_{g} (x) := \{y \in \mathbb{R}^{n}: \, g(y) < g (x)\}$ and $S^{lim}_{g}(\bar{x}) := \liminf_{y \rightarrow \bar{x}} S_{g}(y)$, respectively.  
 
The {\it normal} operator and {\it limiting normal} operator of $g$ at $g(x)$ defined by  
\begin{align*}
 N_{g} (x) := (S_{g} (x)- x)^{\circ} ~~ \text{ and } ~~ N_{g}^{lim} (x) := (S^{lim}_{g}(x) - x)^{\circ}. 
\end{align*} 

Finally, we mention other useful subdifferentials suitable for the study of optimality conditions for quasiconvex mappings. 
We start with the {\it Greenberg-Pierskalla} subdifferential $\partial^{GP}h$, given by 
   \begin{equation}
\partial^{GP} h(x) := \{ v \in \mathbb{R}^{n}: \text{ if } \langle v, y-x \rangle \geq 0 \implies h(y) \geq h(x)\}.
  \end{equation}
For instance, it holds that $0\in \partial^{GP}h(\bar{x})$ whenever $\bar{x}$ is a global minimizer of $h$. 
For more details and applications, see \cite{suzuki, yaoeuro} and references therein. 
  
Another important subdifferential is the {\it quasiconvex subdifferential} $\partial^{q} h$, defined for all $x\in \text{dom} h$ as follows:
    \begin{align}
   \partial^{q}h(x):=
       \left\{
            \begin{array}{cr}
\partial^{FM}h(x)\cap N(S_{h}(x), x), & \text{if }  N(S^{<}_{h}(x), x)\neq \{0\};  \\
\emptyset, & \text{if }  N(S^{<}_{h}(x), x)=\{0\};  \\ 
            \end{array}
       \right.
    \end{align}
It has many important properties, it is always cyclically quasimonotone and coincides with Fenchel-Moreau subdifferential whenever the function is convex. 
See \cite{appropriatesubqc} for more results about the quasiconvex subdifferential. 

\section{Properties of the Strong Subdifferential}\label{sec:03} 

Recently, to analyze the behavior of quasiconvex functions and the relations with the proximal point methods in the nonconvex case, a new notion of subdifferential had been considered the so-called {\it strong subdifferential}, see \cite{Kab-Lara-2}. 



\begin{definition}\label{def:strongdifferentiable}
Given a nonempty set $K \subseteq \mathbb{R}^{n}$, $\beta > 0$ and $\gamma \geq 0$. The  $(\beta, \gamma, K)$-strong subdifferential of $h$ at $\overline{x} \in \mathrm{dom}\,h\cap K$ is the set $\partial^{K}_{\beta, \gamma} h(\overline{x}) $ defined by the following:

A vector $\xi \in \partial^{K}_{\beta, \gamma} h(\overline{x})$ if 
   \begin{align}\label{def:subd}
\max\{h(y), h(\overline{x})\} \geq h(\overline{x}) + \frac{\lambda}{\beta} \langle \xi, y - \overline{x}\rangle + \frac{\lambda}{2} \left(\gamma - \frac{\lambda}{\beta} - \lambda \gamma \right) \lVert y - \overline{x} \rVert^{2}, 
  \end{align}
for every $y \in K$ and for every $\lambda \in [0, 1]$.   

If $K = S_{h}(\overline{x})$, we define the $(\beta, \gamma)$-SS (strong sublevel) subdifferential of $h$ by 
  \begin{align} \label{eq:SS}
  \partial_{\beta, \gamma} h(\overline{x}) := \{\xi \in \mathbb{R}^{n} \mid \, \langle \xi, y - \overline{x}\rangle \leq & - \frac{\beta \gamma}{2} \lVert y - \overline{x} \rVert^{2}, ~ \forall ~ y \in S_{h} (\bar{x})\}. 
 \end{align} 
\end{definition}

We would like to mention that if $\gamma > 0$, the subdifferential $\partial^{K}_{\beta, \gamma} h(\overline{x})$ is motivated for strongly quasiconvex functions, while if $\gamma = 0$, then $\partial^{K}_{\beta, 0} h(\overline{x})$ is motivated for quasiconvex functions. 
Note that the strong subdifferential may provide useful information on strongly quasiconvex functions but other subdifferentials  not, \cite[Remark 20]{Kab-Lara-2}. 


Further properties for the strong subdifferential are listed below (see \cite{Kab-Lara-2}):
 \begin{itemize}

 \item[$(P1)$] For $x \!\in\! {\rm dom}\,h \!\cap\! K$, $\beta \!>\! 0, \gamma \!\geq \!0$ we have $\partial^{K}_{\beta, \gamma} h(x)$ is closed and convex.
  

 \item[$(P2)$] $\partial^{K}_{\beta, \gamma} h(\overline{x})$ is compact for 
 all $\overline{x} \in {\rm int\,(dom}\,h\cap K{\rm )}$ by \cite[Proposition 7$(d)$]{Kab-Lara-2}.



  \item[$(P3)$] If $h$ is strongly quasiconvex with modulus $\gamma > 0$ and lsc, then $\partial^{K}_{\beta, \gamma} h(x) \neq \emptyset$ for all $x \in {\rm dom}\,h \cap K$ by \cite[Corollary 38$(a)$]{Kab-Lara-2}.

  \item[$(P4)$] If $h$ is quasiconvex such that $ \liminf_{\lVert x \rVert \rightarrow + \infty} \frac{h(x)}{\lVert x \rVert^{2}} \geq 0$, then $\partial^{K}_{\beta, \gamma} h(x) \neq \emptyset$ for all $x \in {\rm dom}\,h \cap K$ by \cite[Corollary 38$(b)$]{Kab-Lara-2}.

  \item [$(P5)$] If $x \in {\rm dom}\,h$ and $S_{h(x)} (h) \subseteq K$ then $\partial^{K}_{\beta, \gamma} h(x) \, \subset \, \partial_{\beta, \gamma} h(x)$ for all $\beta > 0$, $\gamma \geq 0$. 

   \item [$(P6)$] If $K_{1} \subset K_{2}$, then  $\partial^{K_{2}}_{\beta, \gamma} h(x) \subset \partial^{K_{1}}_{\beta, \gamma} h(x)$.
 \end{itemize}
 See \cite{Kab-Lara-2} for further properties on the strong subdifferentials.

\subsection{Relation with other subdiferentials} 
\label{subsec:regularhorizonsubdiferential}

We discuss some relation between the regular, the limiting and the horizon subdifferentials with the strong subdifferential. 
Relations with the Greenberg-Pierskalla and quasiconvex subdifferentials are also discussed. 

\begin{proposition}\label{lem:inclusion}
 Let $h$ be a proper function, $\bar{x} \in {\rm dom}\,h$ and $V$ be a convex neighborhood of $\bar{x}$. Suppose that $h$ is strongly quasiconvex function with modulus $\gamma \geq 0$ on $V$. If $v \in \widehat{\partial} h(\bar{x})$, then $v \in \partial^{K}_{1, \gamma} h(\overline{x})$, where $K := S_{h}(\bar{x}) \cap V$.
\end{proposition}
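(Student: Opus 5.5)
The plan is to reduce the defining inequality \eqref{def:subd} with $\beta=1$ to one quadratic estimate on $\langle v, y-\bar{x}\rangle$. I will obtain that estimate by testing the regular subgradient inequality along the segment from $\bar{x}$ to $y$, and bounding $h$ on that segment by strong quasiconvexity.

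First, fix $y\in K=S_h(\bar{x})\cap V$ and write $d:=y-\bar{x}$. Since $h(y)\le h(\bar{x})$, we have $\max\{h(y),h(\bar{x})\}=h(\bar{x})$. Hence \eqref{def:subd} with $\beta=1$ is equivalent to
\begin{equation*}
\lambda\langle v,d\rangle+\frac{\lambda}{2}\left(\gamma-\lambda-\lambda\gamma\right)\|d\|^2\le 0 \quad \text{for all } \lambda\in[0,1].
\end{equation*}
The case $d=0$ is trivial. Otherwise, it suffices to prove the key estimate
\begin{equation*}
\langle v,d\rangle\le-\frac{\gamma}{2}\|d\|^2 .
\end{equation*}
Plugging it in, the left-hand side above is at most $-\frac{\lambda^2}{2}(1+\gamma)\|d\|^2\le 0$, which finishes the proof.

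To get the key estimate, set $x_t:=\bar{x}+td$ for $t\in\,]0,1[$. Both $\bar{x}$ and $y$ lie in the convex set $V$, so $x_t\in V$. Both points also lie in ${\rm dom}\,h$, since $h(y)\le h(\bar{x})<+\infty$. Strong quasiconvexity on $V$, applied with the weight $t$ on $y$, gives
\begin{equation*}
h(x_t)\le h(\bar{x})-t(1-t)\frac{\gamma}{2}\|d\|^2 .
\end{equation*}
On the other hand, $v\in\widehat{\partial}h(\bar{x})$ gives $h(x_t)\ge h(\bar{x})+t\langle v,d\rangle+o(t\|d\|)$. Combining the two bounds, dividing by $t>0$ and letting $t\to0^+$ yields $\langle v,d\rangle\le-\frac{\gamma}{2}\|d\|^2$.

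The only delicate point is the limiting step. The $o(\|x_t-\bar{x}\|)$ term must vanish after division by $t$, which holds because $d$ is fixed and $\|x_t-\bar{x}\|=t\|d\|$. We also need $x_t\in V$ so that strong quasiconvexity is available along the whole segment, and this is exactly why $V$ is assumed convex. Everything else is algebra.
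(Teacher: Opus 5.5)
Your proof is correct and follows the paper's argument. You test the regular subgradient inequality at $x_t=\bar{x}+t(y-\bar{x})\in V$, bound $h(x_t)$ from above by strong quasiconvexity using $h(y)\le h(\bar{x})$, then divide by $t$ and let $t\to 0^{+}$ to get $\langle v,y-\bar{x}\rangle\le-\frac{\gamma}{2}\|y-\bar{x}\|^{2}$; your only addition is the explicit check that this estimate gives the defining inequality of $\partial^{K}_{1,\gamma}h(\bar{x})$ for every $\lambda\in[0,1]$ (the left side is at most $-\frac{\lambda^{2}}{2}(1+\gamma)\|y-\bar{x}\|^{2}$), which the paper leaves implicit.
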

    \begin{proof}
Take $v \in \widehat{\partial}h(\bar{x})$. Then $h(y)\geq h(\bar{x})+\langle v, y-\bar{x} \rangle +o(\|y-\bar{x}\|)$ for all $y$. 
Since $h$ is strongly quasiconvex on $V$, we have
\begin{equation*}
  h(\lambda z + (1-\lambda)x) \leq \max \{h(z), h(x)\} - \lambda(1-\lambda) \frac{\gamma}{2} \lVert z - x \rVert^{2}, ~ \forall ~ z, x \in V, \lambda\in [0,1].
 \end{equation*}
Take $y \in K = S_{h} (\bar{x}) \cap V$ and $t \in (0, 1]$. 
Set $z_{t}:=(1-t)\bar{x}+ty \in V$. Thus,
\begin{align*}
\langle v, z_{t}-\bar{x} \rangle +o(\|z_{t}-\bar{x}\|) +h(\bar{x}) \leq  h(z_{t}) \leq \max \{h(y), h(\bar{x})\} - t(1 -  t) \frac{\gamma}{2} \lVert y - x \rVert^{2}.
 \end{align*}

As $h(y) \leq h(\bar{x})$, the above yields $t \langle v, y-\bar{x} \rangle + o (t\|y-\bar{x}\|) \leq - t(1 - t) \frac{\gamma}{2} \lVert y - x \rVert^{2}$. Thus, dividing by $t>0$ and taking limit when $t\rightarrow 0^{+}$, we get 
\begin{align*}
 \langle v, y - \bar{x} \rangle \leq -\frac{\gamma}{2} \lVert y - \bar{x} \rVert^{2}, ~ \forall ~ y \in S_{h(\bar{x})}(h) \cap V.
\end{align*}
The above yields $v \in \partial^{K}_{1, \gamma} h(\overline{x})$ with $K := S_{h}(\bar{x}) \cap V$.
\end{proof}

Following the proof of Proposition \ref{lem:inclusion}, it is not difficult to show the next statement.
  \begin{proposition}\label{propo:convexq}
      We always have  $\partial^{q}h(\bar{x}) \subset \partial_{1, \gamma}^{S_{h}(\bar{x})\cap V}h(\bar{x})$, if $h$ is strongly quasiconvex with modulus $\gamma \geq 0$ over a convex set $V$. 
  \end{proposition}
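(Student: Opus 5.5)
The plan is to reduce Proposition \ref{propo:convexq} to the mechanism in the proof of Proposition \ref{lem:inclusion}, using $\partial^{FM}h(\bar{x}) \subset \widehat{\partial}h(\bar{x})$. Take $v \in \partial^{q}h(\bar{x})$. If $N(S^{<}_{h}(\bar{x}),\bar{x})=\{0\}$, then $\partial^{q}h(\bar{x})=\emptyset$ and there is nothing to prove. Otherwise $v \in \partial^{FM}h(\bar{x})\cap N(S_{h}(\bar{x}),\bar{x})$, and in particular
\begin{equation*}
h(y)\geq h(\bar{x})+\langle v, y-\bar{x}\rangle \quad \text{for all } y \in \mathbb{R}^{n}.
\end{equation*}
This global inequality is stronger than what Proposition \ref{lem:inclusion} used, so we do not even need $V$ to be a neighborhood of $\bar{x}$. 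We only need $\bar{x}\in V$ and $V$ convex, so that the segment between $\bar{x}$ and any $y\in V$ stays in $V$.

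Next we argue as before. Fix $y \in K:=S_{h}(\bar{x})\cap V$ and $t\in(0,1]$, and put $z_t=(1-t)\bar{x}+ty\in V$. The Fenchel--Moreau inequality at $z_t$, combined with strong quasiconvexity on $V$ and $h(y)\le h(\bar{x})$, gives
\begin{equation*}
t\langle v,y-\bar{x}\rangle \le h(z_t)-h(\bar{x}) \le -t(1-t)\tfrac{\gamma}{2}\|y-\bar{x}\|^2 .
\end{equation*}
Here no $o(\cdot)$ term appears. Dividing by $t$ and letting $t\to0^+$ yields
\begin{equation*}
\langle v,y-\bar{x}\rangle\le -\tfrac{\gamma}{2}\|y-\bar{x}\|^2 \quad \text{for all } y\in K.
\end{equation*}

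The main obstacle is that this last inequality is the SS-type condition \eqref{eq:SS} with $\beta=1$, not literally the defining inequality \eqref{def:subd}. To conclude membership in $\partial^{K}_{1,\gamma}h(\bar{x})$, I would check \eqref{def:subd} directly, as follows. For $y\in K$ we have $\max\{h(y),h(\bar{x})\}=h(\bar{x})$, so \eqref{def:subd} with $\beta=1$ reads
\begin{equation*}
0\ge \lambda\langle v,y-\bar{x}\rangle+\tfrac{\lambda}{2}(\gamma-\lambda-\lambda\gamma)\|y-\bar{x}\|^2 \quad\text{for all } \lambda\in[0,1].
\end{equation*}
Substituting the bound on $\langle v,y-\bar{x}\rangle$, the right-hand side is at most
\begin{equation*}
\lambda\|y-\bar{x}\|^2\Bigl(-\tfrac{\gamma}{2}+\tfrac{\gamma}{2}-\tfrac{\lambda}{2}-\tfrac{\lambda\gamma}{2}\Bigr)\le 0,
\end{equation*}
which verifies \eqref{def:subd} for every $\lambda\in[0,1]$ and every $y\in K$. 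This closes the argument.

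Alternatively, one could invoke whatever identification of the two subdifferentials the proof of Proposition \ref{lem:inclusion} implicitly used. The direct computation above makes the inclusion self-contained.

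Note that the normal-cone part $N(S_h(\bar{x}),\bar{x})$ of $\partial^{q}h(\bar{x})$ is not needed at all. Only the Fenchel--Moreau inequality is used, so the argument in fact shows $\partial^{FM}h(\bar{x})\subset\partial^{K}_{1,\gamma}h(\bar{x})$, which contains $\partial^{q}h(\bar{x})$.
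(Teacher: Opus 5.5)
Your proof is correct and takes the route the paper indicates: the paper only says to repeat the argument of Proposition \ref{lem:inclusion}, and you do this by using $\partial^{q}h(\bar{x})\subset\partial^{FM}h(\bar{x})$, the segment $z_t=(1-t)\bar{x}+ty$, strong quasiconvexity on $V$, and the limit $t\to0^{+}$. Your explicit check that $\langle v,y-\bar{x}\rangle\le-\frac{\gamma}{2}\|y-\bar{x}\|^{2}$ on $K\subset S_h(\bar{x})$ implies the defining inequality \eqref{def:subd} with $\beta=1$ fills in a step the paper leaves implicit, and you are right that only $\bar{x}\in V$ is needed, not that $V$ be a neighborhood of $\bar{x}$.
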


For general convex sets, the above subdifferentials can be very different. For example, consider the strongly quasiconvex function $h(x)=\sqrt{|x|}$.
In this case, from \cite{Kab-Lara-2}, one has $\partial_{\beta, \frac{1}{2}}^{[-1, 1]}h(0)=[-\beta-\frac{1}{2}, \beta+\frac{1}{2}]$, and by \cite{appropriatesubqc}, we get $\partial^{q}h(0)=\mathbb{R}$.
On the other hand, for the function $g_{1}$ defined in Example \ref{ex:strictnormalcone}, one has $\partial_{1, 1}^{\mathbb{R}}g_{1}(0)=(-\infty, -\frac{1}{2}]$ and $\partial^{q}g_{1}(0)=\emptyset$. 

Now, we proceed by analyzing the relation with the limiting and horizon subdifferential. Since the above subdifferentials take into account the limit of certain sequences, we need to study the limit behaviour of the level sets. 
We have the following proposition. 
         
\begin{proposition}\label{propo:limitinghorizon}
Let $h$ be a proper function, $\bar{x} \in {\rm dom}\,h$ and $V$ be a convex neigh\-bor\-hood of $\bar{x}$ with $\bar{x}\in \text{int }V$. Suppose that $h$ is strongly quasiconvex with modulus $\gamma \geq 0$ on $V$ and that $S_{h} (\cdot) \cap V$ is inner semicontinuous at $\bar{x}$, that is, $S_{h}(\bar{x})\cap V \subset \liminf_{ y \rightarrow \bar{x}, y \in \text{dom}(h)}  S_{h}(y)\cap V$. Then, the following statements hold:
\begin{itemize}
 \item[$(a)$] $\partial h(\bar{x}) \subset \partial^{S_{h}(\bar{x})}_{1, \gamma} h(\overline{x})$; 
    
 \item[$(b)$] $\partial^{\infty} h(\bar{x}) \subset (S_{h} (\bar{x}) \cap V - \bar{x})^{\circ}$.
  \end{itemize}
\end{proposition}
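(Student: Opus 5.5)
Both parts will be proved the same way: represent a limiting (or horizon) subgradient through regular subgradients at nearby points, apply Proposition \ref{lem:inclusion} at those points, and pass to the limit using the inner semicontinuity of $S_h(\cdot)\cap V$. Note that, as in Proposition \ref{lem:inclusion}, the conclusion of $(a)$ is the inequality $\langle v, y-\bar{x}\rangle \le -\frac{\gamma}{2}\|y-\bar{x}\|^2$ for $y$ in the relevant sublevel set. Since $V$ is a neighborhood, I will read the target in $(a)$ as $S_h(\bar{x})\cap V$, consistent with $(b)$ and with the proof of Proposition \ref{lem:inclusion}.

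For $(a)$, take $v\in\partial h(\bar{x})$. There are $x^k\to_h\bar{x}$ and $v^k\to v$ with $v^k\in\widehat{\partial}h(x^k)$. Since $\bar{x}\in\operatorname{int}V$, for $k$ large $x^k\in\operatorname{int}V$, so $V$ is a convex neighborhood of $x^k$ and $x^k\in\operatorname{dom}h$. Proposition \ref{lem:inclusion} applied at $x^k$ gives
\begin{equation*}
\langle v^k, z-x^k\rangle \le -\tfrac{\gamma}{2}\|z-x^k\|^2 \quad \forall\, z\in S_h(x^k)\cap V.
\end{equation*}
Now fix $y\in S_h(\bar{x})\cap V$. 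By inner semicontinuity along $x^k\to\bar{x}$ in $\operatorname{dom}h$, there are $y^k\in S_h(x^k)\cap V$ with $y^k\to y$. Plugging $z=y^k$ into the display and letting $k\to\infty$ gives $\langle v,y-\bar{x}\rangle\le-\frac{\gamma}{2}\|y-\bar{x}\|^2$. This is the required membership.

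For $(b)$, take $v\in\partial^\infty h(\bar{x})$. There are $x^k\to_h\bar{x}$, $v^k\in\widehat{\partial}h(x^k)$ and $t_k\downarrow0$ with $t_kv^k\to v$. The same argument gives $\langle v^k,y^k-x^k\rangle\le-\frac{\gamma}{2}\|y^k-x^k\|^2\le 0$. Multiplying by $t_k>0$ and passing to the limit, the right-hand side times $t_k$ tends to $0$. Hence $\langle v,y-\bar{x}\rangle\le 0$ for all $y\in S_h(\bar{x})\cap V$, that is, $v\in(S_h(\bar{x})\cap V-\bar{x})^\circ$.

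The main obstacle is the limit step: it needs recovery sequences $y^k\in S_h(x^k)\cap V$ converging to each $y\in S_h(\bar{x})\cap V$ along the specific sequence $x^k$. This is exactly what the inner semicontinuity hypothesis supplies, since $x^k\in\operatorname{dom}h$. The only other care needed is that $x^k$ eventually lies in $\operatorname{int}V$, so that Proposition \ref{lem:inclusion} applies at $x^k$. The condition $h(x^k)\to h(\bar{x})$ is not otherwise needed.
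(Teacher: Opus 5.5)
Your proof is correct and is essentially the paper's argument: approximate by regular subgradients $v^k\in\widehat{\partial}h(x^k)$ with $x^k\to_h\bar{x}$, apply Proposition~\ref{lem:inclusion} at $x^k$ on $S_h(x^k)\cap V$ once $x^k\in V$, and pass to the limit along the recovery sequences $y^k\in S_h(x^k)\cap V$ supplied by inner semicontinuity. Your reading of the target in $(a)$ as $S_h(\bar{x})\cap V$ is exactly what the paper's proof establishes (with $S_h(\bar{x})$ alone the inclusion can fail, since $h$ is only strongly quasiconvex on $V$), and your explicit $t_k$-scaling argument for $(b)$ fills in the case the paper dismisses as ``similar''.
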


\begin{proof}
 $(a)$ Take $v \in \partial h(\bar{x})$. Then there exist sequences $\{x^{k}\}, \{v^{k}\} \subset \mathbb{R}^{n}$ with $x^{k} \rightarrow_{h} \bar{x}$ and $v^{k} \rightarrow v$ such that $v^{k} \in \widehat{\partial} h(x^{k})$, $\forall k$. 
 As $\bar{x} \in \text{int }V$, for $k$ large enough, we get $x^{k}\in V$.
 Now, set $K_{k} := S_{h}(x^{k}) \cap V$, $\forall k \in \mathbb{N}$.  
 Using Proposition \ref{lem:inclusion}, for $k$ large enough, one has $v^{k} \!\in\! \partial^{K_k}_{1, \gamma} h(x^{k})$. 

By the inner semicontinuity of $S_{h} (\cdot) \cap V$ restricted to $\text{dom }h$, we see that for every $y \in S_{h} (\bar{x}) \cap V$ there exists a sequence $\{y^{k}\} \subset \mathbb{R}^{n}$ with $y^{k} \in S_{h}(x^{k}) \cap V$ and $y^{k}\in \text{dom }h$ such that $y^{k} \rightarrow y$.
Now, we observe that $v^{k} \in \partial^{K_k}_{1, \gamma} h(x^{k})$ for all $k$ implies $\langle v^{k}, y^{k}-x^{k} \rangle \leq -(\gamma/2)\lVert y^{k} - x^{k} \rVert^{2}$ for all $k$. 
By taking limit in the last expression, we have $\langle v, y-\bar{x} \rangle \leq -(\gamma/2) \lVert y-\bar{x} \rVert^{2}$ for all $y \in S_{h} (\bar{x}) \cap V$, that is, $v \in \partial^{K}_{1, \gamma} h(\bar{x})$, thus $(a)$ holds.  
                
The proof for (b) is similar.    
\end{proof}

\begin{proposition}\label{propo:interior}
 Let $h$ be a continuous function at $\bar{x} \in {\rm dom}\,h$ with $h(\bar{x}) < 0$ and set $[h \leq 0] := \{x \in \mathbb{R}^{n}: h(x) \leq 0\}$. Then the following assertions hold:
\begin{itemize}
 \item[$(a)$] $\partial^{[h \leq 0]}_{\beta, \gamma} h(\bar{x}) = \emptyset$ (if $\beta>0$ and $\gamma>0$).
 
 \item[$(b)$] $\partial^{[h \leq 0]}_{\beta, \gamma} h(\bar{x}) = \{0\}$ (if $\beta > 0$ and $\gamma = 0$). 
\end{itemize} 
\end{proposition}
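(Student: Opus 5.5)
Because $h$ is continuous at $\bar{x}$ with $h(\bar{x})<0$, I will first fix $r>0$ such that $h(y)<0$ for every $y\in B(\bar{x},r)$. Then $B(\bar{x},r)\subset[h\leq 0]=:K$, so $\bar{x}\in\operatorname{int}K$. For every $y\in K$, every $\lambda\in[0,1]$ and every candidate $\xi$, the defining inequality \eqref{def:subd} compares $\max\{h(y),h(\bar{x})\}$ with the right-hand side. The plan is to test \eqref{def:subd} only at points $y$ close to $\bar{x}$ and with $\lambda$ small. On such points the left-hand side is squeezed against $h(\bar{x})$, and this should force $\xi$ to vanish.

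The main step uses nearby points with $h(y)\le h(\bar x)$ in one direction and arbitrary nearby points otherwise; the cleanest route avoids this split. Fix $y=\bar{x}+sd$ with $\|d\|=1$ and $s\in(0,r)$. For any $\epsilon>0$, continuity gives $\max\{h(y),h(\bar{x})\}\le h(\bar{x})+\epsilon$ once $s$ is small. Inequality \eqref{def:subd} then gives
\[
\frac{\lambda}{\beta}s\langle \xi,d\rangle+\frac{\lambda}{2}\Big(\gamma-\frac{\lambda}{\beta}-\lambda\gamma\Big)s^{2}\le \epsilon .
\]
This is not yet enough, since $\epsilon$ and $s$ are coupled. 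Instead I exploit that $\lambda$ is free. Divide by $\lambda>0$ and let $\lambda\to0^+$ with $y$ fixed: since the left-hand side of \eqref{def:subd} does not depend on $\lambda$, we get $\max\{h(y),h(\bar x)\}-h(\bar x)\ge \lambda(\cdots)$. Letting $\lambda\to 0$ only yields $\max\{h(y),h(\bar{x})\}\geq h(\bar{x})$, which is trivial, so $\lambda$ alone does not decide. The useful choice is $y$ with $h(y)\le h(\bar x)$, which makes the left-hand side exactly $h(\bar{x})$; this gives $\frac{1}{\beta}\langle\xi,y-\bar{x}\rangle+\frac12(\gamma-\frac{\lambda}{\beta}-\lambda\gamma)\|y-\bar{x}\|^2\le 0$, and letting $\lambda\to0$ gives $\langle\xi,y-\bar{x}\rangle\le-\frac{\beta\gamma}{2}\|y-\bar{x}\|^2$ on $S_h(\bar{x})\cap K$. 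I expect the main obstacle to be that $S_h(\bar{x})$ need not be a neighborhood of $\bar{x}$. The statement implicitly treats \eqref{def:subd} as constraining all $y\in K$, which near $\bar{x}$ is a full ball. So I will read the inequality as in the SS-form \eqref{eq:SS}, which the paper uses interchangeably (cf.\ (P5) and the proof of Proposition \ref{lem:inclusion}): $\langle \xi, y-\bar{x}\rangle\le -\frac{\beta\gamma}{2}\|y-\bar{x}\|^2$ for all $y\in K$. I will flag this reading explicitly in the proof.

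Under this reading, the conclusion follows on the ball $B(\bar{x},r)\subset K$. For (a), take $y=\bar{x}+sd$ and $y'=\bar{x}-sd$. Adding the two inequalities gives $0\le -\beta\gamma s^2<0$ when $\gamma>0$ and $\beta>0$, a contradiction, so the subdifferential is empty. For (b), with $\gamma=0$, the same pair gives $\langle\xi,d\rangle\le0$ and $\langle\xi,-d\rangle\le0$ for all unit $d$, hence $\xi=0$. Conversely, $\xi=0$ satisfies \eqref{def:subd} with $\gamma=0$: the right-hand side is $h(\bar{x})-\frac{\lambda^2}{2\beta}\|y-\bar{x}\|^2\le h(\bar{x})\le\max\{h(y),h(\bar{x})\}$. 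So the set is exactly $\{0\}$.
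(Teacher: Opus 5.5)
Your argument is the paper's argument. The paper's proof also passes straight to $\langle v,y-\bar x\rangle\le-\frac{\beta\gamma}{2}\|y-\bar x\|^2$ for all $y\in[h\le0]$, and then tests this on a small sphere around $\bar x$ inside $[h\le0]$; your pair $\bar x\pm sd$ is the same test. Your check that $0$ lies in the set in (b) fills in what the paper leaves as ``similar''.

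However, the step you flagged is a genuine gap, and the paper's proof makes exactly the same unjustified step. From \eqref{def:subd} you only get the SS-type inequality for $y\in S_h(\bar x)\cap[h\le0]$. It cannot be extended to all of $[h\le 0]$, because with the literal Definition \ref{def:strongdifferentiable} both (a) and (b) are false. Take $h(x)=|x|-1$ and $\bar x=0$. Then $[h\le0]=[-1,1]$, and $\max\{h(y),h(0)\}=|y|-1$ there, so \eqref{def:subd} becomes
\[
|y|\ \ge\ \tfrac{\lambda}{\beta}\,\xi y+\tfrac{\lambda}{2}\bigl(\gamma-\tfrac{\lambda}{\beta}-\lambda\gamma\bigr)y^{2}\qquad\text{for all } y\in[-1,1],\ \lambda\in[0,1].
\]
For $\beta=\gamma=1$, $\xi=0$ satisfies it, since $\frac{\lambda}{2}(1-2\lambda)y^2\le\frac{1}{16}y^2\le|y|$; so the set is nonempty, contradicting (a). For $\beta=1$, $\gamma=0$, every $\xi\in[-1,1]$ satisfies it; so the set is not $\{0\}$, contradicting (b). Similarly, $h(x)=x^2-1$ is lsc and strongly quasiconvex with modulus $2$, so the paper's own (P3) gives $\partial^{[-1,1]}_{\beta,2}h(0)\neq\emptyset$, again contradicting (a).

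So what you, and the paper, actually prove is the statement for the set
\[
\{\xi:\langle\xi,y-\bar x\rangle\le-\tfrac{\beta\gamma}{2}\|y-\bar x\|^2\ \ \forall\,y\in[h\le0]\},
\]
that is, \eqref{eq:SS} with $S_h(\bar x)$ replaced by $[h\le 0]$. For that set your proof is complete and correct.

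You should state this modified definition explicitly as the one being used, rather than claiming the paper treats the two forms interchangeably. (P5) and Proposition \ref{lem:inclusion} rely only on two valid implications:
\begin{itemize}
\item the SS-type inequality on $K$ implies \eqref{def:subd} on $K$;
\item \eqref{def:subd} on $K$ implies the SS-type inequality on $K\cap S_h(\bar x)$.
\end{itemize}
In both of those places the points involved lie in $S_h(\bar x)$. Here $h(\bar x)<0$, so $[h\le0]$ is in general strictly larger than $S_h(\bar x)$, and the implication you need is exactly the one that fails.
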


\begin{proof}
The proofs are similar, we just prove $(a)$: By contradiction, assume that $\beta>0$ and $\gamma>0$ and that $\partial^{[h \leq 0]}_{\beta, \gamma} h(\bar{x}) \neq \emptyset$. Take $v \in \partial^{[h \leq 0]}_{\beta, \gamma} h(\bar{x})$. Since $h$ is con\-ti\-nuous at $\bar{x}$, there exists $\varepsilon_{0} > 0$ such that $\bar{x} + \varepsilon w \in [h \leq 0]$ for every $w \in \mathbb{R}^{n}$ with $\|w\|=1$ and every $0 < \varepsilon \leq \varepsilon_{0}$. 
Thus, $\langle v, y- \bar{x} \rangle \leq -(\beta \gamma/{2}) \lVert y - \bar{x} \rVert^{2}$ for all $y \in [h\leq 0]$. Then, by taking $y = \bar{x} + \epsilon w$, we have 
$$\langle v, \varepsilon w \rangle \leq - (\beta \gamma/{2}) \varepsilon^{2} \|w\|^{2} = - (\beta \gamma/{2}) \varepsilon^{2}.$$
As a consequence $\|v\|_{\infty} = \sup_{w: \|w\|=1} \langle v, w \rangle \leq - (\beta \gamma/{2}) \varepsilon < 0$, a contradiction. 
Therefore, $\partial^{[h \leq 0]}_{\beta, \gamma} h(\bar{x}) = \emptyset$.
\end{proof}

We continue with the relation with the Greenberg-Pierskalla subdifferential. We have the following statement.

 \begin{proposition}\label{propo:gpsubd}
We always have $\partial_{\beta, \gamma}^{\mathbb{R}^{n}}h(\bar{x}) \subset \partial^{GP}h(\bar{x})$ for every $\gamma, \beta>0$.     
 \end{proposition}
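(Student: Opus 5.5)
We argue by contraposition, working directly from inequality \eqref{def:subd} with $K=\mathbb{R}^{n}$. Fix $\xi \in \partial^{\mathbb{R}^{n}}_{\beta,\gamma} h(\bar{x})$ and $y \in \mathbb{R}^{n}$ with $\langle \xi, y-\bar{x}\rangle \geq 0$. We must show $h(y)\geq h(\bar{x})$. So suppose instead that $h(y) < h(\bar{x})$; we will derive a contradiction.

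Under this assumption the left-hand side of \eqref{def:subd} equals $\max\{h(y),h(\bar{x})\}=h(\bar{x})$. The defining inequality then reduces, for every $\lambda\in[0,1]$, to
\begin{equation*}
0 \;\geq\; \frac{\lambda}{\beta}\langle \xi, y-\bar{x}\rangle + \frac{\lambda}{2}\Big(\gamma - \frac{\lambda}{\beta} - \lambda\gamma\Big)\|y-\bar{x}\|^{2}.
\end{equation*}
We divide by $\lambda>0$ and let $\lambda\to 0^{+}$. This is the same limiting device as in the proof of Proposition \ref{lem:inclusion}, and it gives
\begin{equation*}
\langle \xi, y-\bar{x}\rangle \leq -\frac{\beta\gamma}{2}\|y-\bar{x}\|^{2}.
\end{equation*}
This is the SS-type inequality \eqref{eq:SS} on the strict sublevel set; it could equally be read off from (P5) and (P6).

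We now combine this with $\langle \xi, y-\bar{x}\rangle\geq 0$ to get $\frac{\beta\gamma}{2}\|y-\bar{x}\|^{2}\leq 0$. Since $\beta,\gamma>0$, this forces $y=\bar{x}$. That is impossible, because $h(y)<h(\bar{x})$. Hence $h(y)\geq h(\bar{x})$, and therefore $\xi\in\partial^{GP}h(\bar{x})$.

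The only delicate point is where strict positivity of $\beta\gamma$ enters. With $\gamma=0$ the argument only yields $\langle\xi,y-\bar{x}\rangle\le 0$, which together with $\langle\xi,y-\bar{x}\rangle\geq 0$ gives no contradiction. This explains why the hypothesis $\gamma>0$ is needed. Everything else is a routine limit, so no real obstacle is expected.
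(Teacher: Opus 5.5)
Your proof is correct and uses the same mechanism as the paper's: for small $\lambda$ the quadratic term in \eqref{def:subd} has a positive coefficient because $\beta\gamma>0$. The paper fixes a single $\lambda\in(0,\frac{\gamma\beta}{1+\gamma\beta})$ and reads off $\max\{h(y),h(\bar{x})\}\geq h(\bar{x})+\alpha\|y-\bar{x}\|^{2}$ with $\alpha>0$ directly, whereas you argue by contradiction and let $\lambda\to 0^{+}$; this difference is only cosmetic.
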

   \begin{proof}
Take $\xi \in \partial_{\beta, \gamma}^{\mathbb{R}^{n}}h(\bar{x})$.
To show that $\xi \in \partial^{GP}h(\bar{x})$, we suppose that $\langle \xi, y-\bar{x}\rangle \geq 0$ for some $y$. 
Let $\lambda \in (0, \frac{\gamma \beta}{1+\gamma \beta})$ and set $\alpha:=\frac{\lambda}{2}(\gamma-\frac{\lambda}{\beta}-\gamma\lambda)$. This choice of $\lambda$ implies $\alpha>0$.  
Hence, from \eqref{def:subd}, one has 
    \begin{align*}
        \max\{h(y), h(\bar{x})\}\geq h(\bar{x})+\alpha \|y-\bar{x}\|^{2}. 
    \end{align*}
The above expression yields $h(y)\geq h(\bar{x})$. 
   \end{proof}
  The inclusion in the above proposition can be strict, in fact, in Example \ref{ex:strictnormalcone} and using the corresponding notation, we have $\partial_{1, 1}^{\mathbb{R}}g_{1}(\bar{x})=(-\infty, -1/2]$ and $\partial^{GP}g_{1}(\bar{x})=(-\infty, 0)$.
For general convex set $K$, both subdifferentials can be very different. Indeed, from Remark \ref{remark:fhregular}, one has $\partial_{1, 1}^{[0, 1]}g(0)=(-\infty, -1/2]$ meanwhile $\partial^{GP}g(0)=(0, \infty)$. 



\subsection{Basic calculus rules for the strong subdifferential}
\label{subsec:basiccalculusrules}

We continue by providing some basic calculus rules for the strong subdifferential of the supremum of a finite number of functions, an important operation in variational analysis.
We start with the following proposition. 

\begin{proposition}\label{propo:inclusionmax}
 Let $\{g_{j}\}_{j \in I}$ be a finite number of extended-valued functions on $\mathbb{R}^{n}$ and set $g := \sup_{j \in I} g_{j}$. Then, for every $\bar{x} \in {\rm dom}\,g$, we have
\begin{align}\label{eqn:subdmaximum}
 \overline{co \left( \bigcup_{j\in I(\bar{x})} \partial_{\beta, \gamma_{j}}^{K_{j}} g_{j}(\bar{x}) \right)} \subset \partial_{\beta, \gamma_{m}}^{K} g(\bar{x}); 
\end{align} 
 where $I(\bar{x}) := \{j \in I: g(\bar{x}) = g_{j}(\bar{x})\}$, $K \subset \cap_{j \in I(\bar{x})}K_{j}$ and $\gamma_{m} := \min_{j \in I(\bar{x})} \gamma_{j}$.
 \end{proposition}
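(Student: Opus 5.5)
The plan is to show three things in turn: each single subdifferential $\partial^{K_j}_{\beta,\gamma_j} g_j(\bar{x})$ with $j\in I(\bar{x})$ is contained in $\partial^{K}_{\beta,\gamma_m} g(\bar{x})$; the latter set is convex; and it is closed. Convexity and closedness come directly from property $(P1)$ applied to $g$ with the set $K$. Once these hold, the convex hull of the union sits inside $\partial^{K}_{\beta,\gamma_m} g(\bar{x})$, and then so does its closure. If $I(\bar{x})$ is empty, or every $\partial^{K_j}_{\beta,\gamma_j} g_j(\bar x)$ is empty, the left-hand side is empty and there is nothing to prove. Note also that $\bar{x}\in{\rm dom}\,g$ together with $j\in I(\bar{x})$ gives $g_j(\bar{x})=g(\bar{x})$ finite, so the terms appearing below are well defined.

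For the single-index inclusion, fix $j\in I(\bar{x})$ and $\xi\in\partial^{K_j}_{\beta,\gamma_j} g_j(\bar{x})$. Take $y\in K\subset K_j$ and $\lambda\in[0,1]$. Inequality \eqref{def:subd} for $g_j$ then gives
$$\max\{g_j(y),g_j(\bar{x})\}\ \ge\ g_j(\bar{x})+\tfrac{\lambda}{\beta}\langle\xi,y-\bar{x}\rangle+\tfrac{\lambda}{2}\bigl(\gamma_j-\tfrac{\lambda}{\beta}-\lambda\gamma_j\bigr)\|y-\bar{x}\|^2 .$$
We now compare each side with the corresponding expression for $g$. On the left, $g_j(y)\le g(y)$ and $g_j(\bar{x})=g(\bar{x})$, so the left side is at most $\max\{g(y),g(\bar{x})\}$. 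On the right, $g_j(\bar{x})=g(\bar{x})$, and the quadratic coefficient equals $\tfrac{\lambda}{2}\bigl(\gamma_j(1-\lambda)-\tfrac{\lambda}{\beta}\bigr)$. Since $1-\lambda\ge0$ and $\gamma_j\ge\gamma_m$, this coefficient is nondecreasing in $\gamma_j$ and hence at least $\tfrac{\lambda}{2}\bigl(\gamma_m-\tfrac{\lambda}{\beta}-\lambda\gamma_m\bigr)$. Chaining these two comparisons yields exactly \eqref{def:subd} for $g$ with modulus $\gamma_m$ on $K$. Therefore $\xi\in\partial^{K}_{\beta,\gamma_m} g(\bar{x})$.

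The one point needing care is convexity, since \eqref{def:subd} has a max on the left and is not linear in $\xi$. I will verify it directly rather than lean entirely on $(P1)$. For fixed $y$ and $\lambda$, the right-hand side of \eqref{def:subd} is affine in $\xi$ while the left-hand side does not depend on $\xi$. So the set of admissible $\xi$ for each pair $(y,\lambda)$ is a closed half-space, or all of $\mathbb{R}^n$ when $\lambda=0$. The strong subdifferential is the intersection of these sets over all $(y,\lambda)$, hence closed and convex, which confirms $(P1)$. The monotonicity in $\gamma$ used above, which relies on $\lambda\le1$, is the other step to check carefully; beyond that the argument is routine.
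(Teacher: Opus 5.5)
Your proof is correct and follows essentially the same route as the paper. The paper also reduces to the single-index inclusion using that $\partial^{K}_{\beta,\gamma_m} g(\bar{x})$ is closed and convex, then compares the defining inequalities termwise via $K\subset K_j$, $g_j\le g$, $g_j(\bar{x})=g(\bar{x})$ and $\gamma_m\le\gamma_j$ (using $\lambda(1-\lambda)\ge 0$); the only difference is that you check closedness and convexity directly, as an intersection of closed half-spaces, rather than citing $(P1)$.
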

     \begin{proof}
Since $\bar{x} \in \text{dom }g\cap K$, by \cite[Proposition 7]{Kab-Lara-2} we get that $\partial_{\beta, \gamma_{m}}^{K} g(\bar{x})$ is convex and closed set. So, it suffices to show that if $w \in \bigcup_{j \in I(\bar{x})} \partial_{\beta, \gamma_{j}}^{K_{j}} g_{j} (\bar{x})$, then $w \in \partial_{\beta, \gamma_{m}}^{K} g(\bar{x})$. Indeed, take $j \in I(\bar{x})$ and $w \in \partial_{\beta, \gamma_{j}}^{K_{j}} g_{j}(\bar{x})$. Thus, for every $y \in K_{j}$ and every $\lambda \in [0, 1]$, we have
\begin{align}\label{eqn:wj}
 \frac{\lambda}{\beta}\langle w, y - \bar{x} \rangle \leq \max\{g_{j}(y), g_{j} (\bar{x})\} -\frac{\lambda (1 - \lambda) \gamma_{j}}{2} \|y-\bar{x} \|^{2} + \frac{\lambda^{2}}{2 \beta} \|y-\bar{x}\|^{2}.
\end{align}

Since $j \in I(\bar{x})$, $g_{j} (\bar{x}) = g(\bar{x})$, and as $K \subset \cap_{i\in I(\bar{x})} K_{i}$, $\gamma_{m} \leq \gamma_{j}$, $g_{j} (y) \leq g(y)$, expression \eqref{eqn:wj} implies that
$$\frac{\lambda}{\beta} \langle w, y-\bar{x} \rangle \leq \max\{g(y), g(\bar{x})\} - \frac{\lambda (1-\lambda) \gamma_{m}}{2} \|y-\bar{x}\|^{2} + \frac{\lambda^{2}}{2 \beta} \|y - \bar{x}\|^{2},$$ 
for all $y \in K$ and all $\lambda \in [0, 1]$, thus $w \in \partial_{\beta, \gamma_{m}}^{K} g(\bar{x})$.
\end{proof}

The inclusion in \eqref{eqn:subdmaximum} may be strict even in some simple situations.  

\begin{example}(The inclusion in \eqref{eqn:subdmaximum} may be strict).
Consider the extended-valued functions $g_{1}: \mathbb{R} \rightarrow \mathbb{R}$ and $g_{2}: \mathbb{R} \rightarrow \mathbb{R}$ defined as 
\begin{align*}
 g_{1} (x) = \left\{
 \begin{array}{ll}
  0, & {\rm if} ~ x \geq 0,\\
  \frac{1}{2} x^{2}, & {\rm if} ~ x < 0.
  \end{array}
 \right.
 \text{ and }\
 g_{2} (x) = \left\{
 \begin{array}{cl}
  \frac{1}{2}x^{2} & {\rm if} ~ x \geq 0,  \\
  0 & {\rm if} ~ x<0. \\
 \end{array}
 \right.  
\end{align*} 

Set $g := \max\{g_{1}, g_{2}\}$, $K := K_{1} = K_{2} = \mathbb{R}$, $\gamma = \gamma_{1} = \gamma_{2}=1$ and $\beta=1$. Take $\bar{x} = 0$. Then, $\partial_{1, 1}^{K} g_{1} (\bar{x}) = \partial_{1, 1}^{K} g_{1} (\bar{x}) = \emptyset$, and since $g(x) = \frac{1}{2}x^{2}$ is strongly quasiconvex, by using $(P3)$ we get $\partial_{1, 1}^{K} g(\bar{x}) \neq \emptyset$. 
Hence, the inclusion in \eqref{eqn:subdmaximum} is strict. 
\end{example}

By Proposition \ref{propo:inclusionmax}, we see that there is some freedom in the choice of $K_{j}$ meanwhile $K\subset \cap_{j \in I(\bar{x})}K_{j}$. 
A particular case choice is $K_{j}:=S_{g_{j}}(\bar{x})$, $\forall j$ and $K=S_{g}(\bar{x})$. Clearly, $K\subset \cap_{j \in I(\bar{x})}K_{j}$ and as a consequence of Proposition \ref{propo:inclusionmax}, we get
\begin{align*}
 \overline{ co \left(\bigcup_{j \in I(\bar{x})} \partial_{\beta, \gamma_{j}} g_{j} (\bar{x}) \right)} \subset \partial_{\beta, \gamma_{m}} g(\bar{x}). 
\end{align*} 

Now, we focus on a special case where \eqref{eqn:subdmaximum} holds as equality. 
First, let us recall some definitions (see \cite[Section 3]{Kab-Lara-2}). 
\begin{definition}\label{def:fregular}
 Let $g: \mathbb{R}^{n} \rightarrow \overline{\mathbb{R}}$ be a proper function, $K \subset \mathbb{R}^{n}$, $\beta>0$, $\gamma \geq 0$ and $x \in ({\rm dom}\,g) \cap K$. 
 The upper Dini directional derivative of $g$ at $x \in {\rm dom}\,g$ in the direction $d \in \mathbb{R}^{n}$ is defined by 
 \begin{align*}
 g^{D+} (x; d) := \limsup_{t \rightarrow 0^{+}} \frac{g(x + td) - g(x)}{t}.
\end{align*}
Furthermore, we say that $g$ is $F$-regular at $x$ on $(K, \beta, \gamma)$, if for every $d \in \mathbb{R}^{n}$, we have 
$$g^{D+} (x; d) \geq 0 \, \implies \, \sigma (\partial_{\beta, \gamma}^{K} g(x); d) \geq 0.$$
\end{definition}

For more results of the above definition, see \cite[Section 3]{Kab-Lara-2}. 
We proceed with the next result. 

\begin{proposition}\label{propo:reverseinclusionmax}
Let $\{g_{j}\}_{j \in I}$ a finite number of extended-valued functions on $\mathbb{R}^{n}$, $g:=\sup_{j \in I} g_{j}$, $\bar{x} \in {\rm dom}\,g$ and $K$ be a convex set such that $K \subset \cap_{j\in I(\bar{x})}K_{j}$. 
Assume that $\cap_{j \in I(\bar{x})}[\text{cone } \partial_{\beta, \gamma_{j}}^{K_{j}} g_{j}(\bar{x})]^{\circ} \subset \text{cone}(K-\bar{x})$ and that $\partial_{\beta, \gamma_{j}}^{K_{j}} g_{j} (\bar{x})$ is compact and $g_{j}$ is F-regular at $\bar{x}$ for all $j \in I(\bar{x})$. 

Furthermore, we assume that the following Slater-type condition holds:
\begin{align}\label{eqn:slatertype}\tag{$\mathbb{S}$}
 \text{ there exists }\widehat{d}\in D(K, \bar{x}) \text{ such that } \sigma( \partial_{\beta, \gamma_{j}}^{K_{j}} g_{j}(\bar{x}), \widehat{d}) < 0 \text{ for all } j\in I(\bar{x}). 
\end{align}
Then
\begin{align}\label{eqn:subdmaximumreverse}
 \overline{co \left(\bigcup_{j\in I(\bar{x})} \partial_{\beta, \gamma_{j}}^{K_{j}} g_{j}(\bar{x}) \right)} = \partial_{\beta, \gamma_{m}}^{K} g(\bar{x}). 
\end{align} 
\end{proposition}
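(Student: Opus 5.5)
The inclusion ``$\subset$'' is Proposition \ref{propo:inclusionmax}, so only the reverse inclusion $\partial_{\beta, \gamma_{m}}^{K} g(\bar{x}) \subset C$ needs proof, where
\[
C := \overline{co \left(\bigcup_{j\in I(\bar{x})} \partial_{\beta, \gamma_{j}}^{K_{j}} g_{j}(\bar{x}) \right)}.
\]
I will argue by contradiction with a separation argument. Since $I(\bar{x})$ is finite and each $\partial_{\beta, \gamma_{j}}^{K_{j}} g_{j}(\bar{x})$ is compact, the convex hull of their union is already compact, so the closure is superfluous and $C$ is a compact convex set. Its support function is $\sigma(C; d)=\max_{j\in I(\bar{x})} \sigma(\partial_{\beta, \gamma_{j}}^{K_{j}} g_{j}(\bar{x}); d)$. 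The Slater condition \eqref{eqn:slatertype} guarantees that every $\partial_{\beta, \gamma_{j}}^{K_{j}} g_{j}(\bar{x})$ is nonempty; otherwise its support function would be $-\infty$, and the conclusion is easier to handle separately in that case.

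Suppose $\xi \in \partial_{\beta, \gamma_{m}}^{K} g(\bar{x})\setminus C$. Strict separation gives $d$ with $\langle \xi, d\rangle > \sigma(C;d)$. The aim is to force $d$ to be a direction of $K$ along which $g$ does not decrease to first order, and then contradict the defining inequality \eqref{def:subd} for $\xi$. I will proceed in two cases.

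\emph{Case $\sigma(C;d)<0$.} Then $\sigma(\partial_{\beta,\gamma_j}^{K_j} g_j(\bar{x});d)<0$ for every $j\in I(\bar{x})$, so $d$ lies in the interior of $\bigcap_{j}[\text{cone}\,\partial_{\beta,\gamma_j}^{K_j} g_j(\bar{x})]^{\circ}$. The cone hypothesis then gives $d\in \text{cone}(K-\bar{x})$, i.e.\ $d=s(y-\bar{x})$ with $y\in K$ and $s\geq 0$. If $s=0$ then $d=0$, which contradicts $\langle \xi,d\rangle>\sigma(C;0)=0$, so we may rescale and take $d=y-\bar{x}$. However, $\langle \xi, d\rangle$ need not be negative here, so this case alone does not immediately contradict \eqref{def:subd}. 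I therefore want to reduce to the second case.

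\emph{Case $\sigma(C;d)\geq 0$.} To reduce to this case, perturb $d$ to $d_\epsilon=d+\epsilon \widehat d$ for small $\epsilon>0$, keeping the strict separation $\langle \xi,d_\epsilon\rangle>\sigma(C;d_\epsilon)$. Now consider some $j$ with $\sigma(\partial_{\beta,\gamma_j}^{K_j} g_j(\bar{x});d)\ge 0$. The contrapositive of F-regularity is what I need to use. F-regularity only states $g_j^{D+}\ge 0 \Rightarrow \sigma\ge 0$, which runs in the wrong direction, so the plan here has to be the following. Pick a direction $d\in D(K,\bar{x})$ along which $\xi$ is ``too large,'' and use the Slater direction $\widehat d$ to get $\sigma(\cdot;\widehat d)<0$. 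F-regularity then gives $g_j^{D+}(\bar{x};\widehat d)<0$ for all active $j$; together with continuity of the inactive $g_j$, this yields $g(\bar{x}+t\widehat d)<g(\bar{x})$ for small $t$. Combining a convex combination of $d$ and $\widehat d$, which stays in $D(K,\bar{x})$ by convexity of $K$, produces a point $y=\bar{x}+t(d+\epsilon\widehat d)\in K$ with $g(y)\le g(\bar{x})$. At this $y$, \eqref{def:subd} with small $\lambda$ gives, after dividing by $\lambda$ and letting $\lambda\to0$, the inequality $\langle \xi, y-\bar{x}\rangle \leq -\tfrac{\beta\gamma_m}{2}\|y-\bar{x}\|^2\le 0$. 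This contradicts the strict separation, since $\langle \xi, y-\bar{x}\rangle > t\,\sigma(C; d_\epsilon)$, once $\sigma(C;d_\epsilon)\ge0$ or the quadratic term is controlled.

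The main obstacle is the middle step: converting the separating direction into a feasible direction of $K$ along which $g$ does not increase, which is exactly where the polar-cone inclusion, F-regularity and the Slater direction must interact. I would first try the perturbation $d+\epsilon\widehat d$ and track the signs of $\sigma(C;\cdot)$ carefully. I expect some sign bookkeeping, and possibly the extra use of the inactive indices through semicontinuity.
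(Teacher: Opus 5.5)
\textbf{There is a genuine gap at the step you flag, and it cannot be closed.}

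Strict separation of $\xi$ from the compact convex set $C$ gives only $\langle \xi,d\rangle>\sigma(C;d)$. Your endgame runs as follows. The contrapositive of F-regularity gives $g_j^{D+}(\bar x;d')<0$ for every active $j$. This yields a point $\bar x+td'\in K$ with $g(\bar x+td')\le g(\bar x)$. Then \eqref{def:subd} with $\lambda\to0^+$ gives $\langle\xi,d'\rangle\le 0$. This is the same mechanism as the paper's use of the bound $\langle v,d\rangle\le\beta\max\{g^{D+}(\bar x;d),0\}$ on $D(K,\bar x)$. It needs a direction $d'\in D(K,\bar x)$ with $\langle\xi,d'\rangle>0>\sigma(C;d')$.

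Your first case gives $d\in\text{cone}(K-\bar x)$ and descent, but $\langle\xi,d\rangle$ may be $\le 0$.

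In your second case, suppose $\sigma(C;d)>0$. Then $d\notin\bigcap_j[\text{cone}\,\partial^{K_j}_{\beta,\gamma_j}g_j(\bar x)]^{\circ}$, so the cone hypothesis says nothing and $d$ need not lie in $D(K,\bar x)$. Moreover, $\sigma(\partial^{K_j}_{\beta,\gamma_j}g_j(\bar x);d+\epsilon\widehat d)>0$ for some $j$ and all small $\epsilon$, so F-regularity cannot be applied along $d+\epsilon\widehat d$. Descent along $\widehat d$ also gives no descent along $d+\epsilon\widehat d$, since upper Dini derivatives are not sublinear. So the point $y=\bar x+t(d+\epsilon\widehat d)\in K$ with $g(y)\le g(\bar x)$ is unsupported. (Also, the Slater condition does not force nonemptiness, since $\sigma(\emptyset;\cdot)=-\infty<0$.)

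\textbf{The statement is false as written.} Take a single function $g=g_1=\tfrac12x^2$ on $\mathbb{R}$, with $\bar x=1$, $K_1=\mathbb{R}$, $K=[0,1]$ and $\beta=\gamma_1=1$.
\begin{itemize}
\item Testing \eqref{def:subd} at $y=-1$ with $\lambda\to0^+$ gives $\xi\ge1$. Testing at $y=1+t$ with $\lambda=1$ and $t\to0^+$ gives $\xi\le1$. A direct check shows $\xi=1$ works, so $\partial^{\mathbb{R}}_{1,1}g(1)=\{1\}$, which is compact.
\item All hypotheses hold: $[\text{cone}\{1\}]^{\circ}=\mathbb{R}_-=\text{cone}(K-1)$; $g^{D+}(1;d)=d=\sigma(\{1\};d)$ gives F-regularity; and $\widehat d=-1\in D(K,1)$ satisfies the Slater condition.
\item Every $y\in[0,1]$ lies in $S_g(1)$, so $\xi\in\partial^{[0,1]}_{1,1}g(1)$ if and only if $\xi\ge\tfrac12(1-2\lambda)(1-y)$ for all $\lambda\in(0,1]$ and $y\in[0,1)$. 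Hence $\partial^{[0,1]}_{1,1}g(1)=[\tfrac12,\infty)\neq\{1\}$.
\end{itemize}

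\textbf{Where the paper's proof breaks.} The paper gets past your obstacle only by asserting, via Hahn--Banach, a $d$ with $\langle v,d\rangle>0\ge\langle w,d\rangle$ for all $w$ in the union. Such a $d$ exists only when $v\notin\overline{\text{cone}\,C}$; for $v=2$ in this example there is none.

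\textbf{What the argument actually proves.} The separation-plus-Slater argument, yours or the paper's, establishes $\partial^{K}_{\beta,\gamma_m}g(\bar x)\subset\overline{\text{cone}\,C}$. Combined with Proposition \ref{propo:inclusionmax}, this is equality of the closed cones generated by the two sides of \eqref{eqn:subdmaximumreverse}, not of the sets themselves. Both versions also need some upper semicontinuity of the inactive $g_j$ at $\bar x$, which you had to add and the paper uses implicitly when choosing $j_0$. Your Case 1 versus Case 2 tension is exactly the difference between $C$ and that cone.
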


\begin{proof}
Since \eqref{eqn:subdmaximum} holds, we show only the reverse inclusion. Indeed, suppose for the contrary that there exists $v \in \partial_{\beta, \gamma_{m}}^{K} g(\bar{x})$ with $v \notin \overline{co \left(\bigcup_{j \in I(\bar{x})} \partial_{\beta, \gamma_{j}}^{K_{j}} g_{j}(\bar{x})\right)}$.
By the Hahn-Banach theorem, there exists $d\neq 0$ satisfying 
$$\langle v, d \rangle >0 \geq \langle w, d\rangle, ~ \forall ~ w \in \bigcup_{j\in I(\bar{x})} \partial_{\beta, \gamma_{j}}^{K_{j}} g_{j}(\bar{x}),$$ 
which implies that 
$d \in \cap_{j \in I(\bar{x})}[\text{cone }\partial_{\beta, \gamma_{j}}^{K_{j}}g_{j}(\bar{x})]^{\circ}$, and hence $d\in \text{cone}(K-\bar{x})$.

Take $\widehat{d}$ satisfying condition \eqref{eqn:slatertype}. Then taking $d_{\theta} := d+\theta \widehat{d}$ with $\theta > 0$ small enough, we get    
\begin{align}\label{eqn:hbsupremum}
\langle v, d_{\theta} \rangle >0 > \langle w, d_{\theta} \rangle, \ \ \forall ~ w \in \bigcup_{j\in I(\bar{x})} \partial_{\beta, \gamma_{j}}^{K_{j}} g_{j}(\bar{x}).
\end{align}
Since $K$ is convex and $\hat{d}\in D(K, \bar{x})=\mathbb{R}_{+}(K-\bar{x})$, one has $d_{\theta}=d+\theta \hat{d}\in D(K, \bar{x})$. From \eqref{eqn:hbsupremum} and by the compactness of $\partial_{\beta, \gamma_{j}}^{K_{j}} g_{j}(\bar{x})$, we have $0 > \sigma (\partial_{\beta, \gamma_{j}}^{K_{j}} g_{j}(\bar{x}), d_{\theta})$ for all $j \in I(\bar{x})$. 

Now, since $I$ is finite, we can find $j_{0} \in I(\bar{x})$ such that $g^{D+} (\bar{x}; d_{\theta}) \leq g_{j_{0}}^{D+} (\bar{x}; d_{\theta})$. Since $d_{\theta} \in  D(K, \bar{x})$ and by \cite[Proposition 9]{Kab-Lara-2}, we have 
  \begin{align*}
 \langle v, d_{\theta} \rangle \leq \beta \max\{g^{D+} (\bar{x}; d_{\theta}), 0\}.     
  \end{align*}

From \eqref{eqn:hbsupremum}, we get $0 < \langle v, d_{\theta} \rangle$, and thus $0 < g^{D+} (\bar{x}; d_{\theta}) \leq g_{j_{0}}^{D+}(\bar{x}; d_{\theta})$. Finally, since $g_{j_{0}}$ is $F$-regular at $\bar{x}$, it follows that $\sigma (\partial_{\beta, \gamma_{j_{0}}}^{K_{j_{0}}} g_{j_{0}}(\bar{x}), d_{\theta})\geq 0$, which is a contradiction. 
As consequence, $v \in \overline{co \left(\bigcup_{j \in I(\bar{x})} \partial_{\beta, \gamma_{j}}^{K_{j}} g_{j}(\bar{x})\right)}$ and the proof is complete.
\end{proof}

\section{Normal Cone of a Level set}\label{sec:04}
Let $\{g_{j}\}_{j \in I}$ be a finite number of extended-valued functions on $\mathbb{R}^{n}$ and set $g:=\sup_{j \in I} g_{j}$. Furthermore, define
\begin{align*}
 \Omega := \left\{x \in \mathbb{R}^{n} \mid \, g(x) = \sup_{j \in I} g_{j} (x) \leq 0 \right\}. 
\end{align*}
In this section, we focus on obtaining a computable expression for the normal cone of $\Omega$ at a feasible point $\bar{x} \in \Omega$. 
Our main assumption about the feasible set is that 
   \begin{align*}
       \Omega \ \text{ is convex}. 
   \end{align*}
   
Our approach is very general since we only suppose that $\Omega$ is convex, and we do not impose any kind of locally Lipschitz, continuity or convexity assumption on the functions $\{g_{j}\}_{j \in I}$. We mention that the convexity of $\Omega$ holds if $g$ is quasiconvex (in particular if the function $g$ is convex). 

The next proposition provides a lower estimate of the normal cone in terms of the strong subdifferentials and the normal operators of the functions $g_{j}$. 

\begin{proposition}\label{propo:normalcone}
 Let $\{g_{j}\}_{j \in I}$ be a finite number of extended-valued functions in $\mathbb{R}^{n}$, $g := \sup_{j \in I} g_{j}$, $\bar{x} \in {\rm dom}\,g \cap \Omega$ and $\{K_{j}\}_{j\in I}$ be a family of sets such that $\Omega \subset \cap_{j \in I(\bar{x})} K_{j}$ where $I(\bar{x}) := \{j \in I\mid \, g(\bar{x}) = 0\}$. 
 Then 
\begin{align}\label{eqn:normalconemax}
 \overline{\bigcup_{\mu \in \mathbb{R}_{+}^{ |I(\bar{x})|}} \left\{ \sum_{j \in I(\bar{x}): \mu_{j} > 0} \mu_{j} \partial_{\beta_{j}, \gamma_{j}}^{K_{j}} g_{j} (\bar{x}) + \sum_{j \in I(\bar{x}): \mu_{j} = 0} N_{g_{j}} (\bar{x}) \right\} } \subset N(\Omega, \bar{x}), 
\end{align}  
 \end{proposition}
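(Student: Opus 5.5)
Since $\Omega$ is convex, $N(\Omega,\bar{x}) = \widehat{N}(\Omega,\bar{x}) = (\Omega-\bar{x})^{\circ}$, which is a closed convex cone. It is therefore closed under addition and positive scaling, and it suffices to show that each summand of the union in \eqref{eqn:normalconemax} lies in $(\Omega - \bar{x})^{\circ}$; the closure then comes for free. We read $I(\bar{x})$ as the set of active indices $j$ with $g_j(\bar{x}) = g(\bar{x}) = 0$. Concretely, we fix $\mu \in \mathbb{R}_{+}^{|I(\bar{x})|}$ and show two inclusions. First, $\partial^{K_j}_{\beta_j,\gamma_j} g_j(\bar{x}) \subset (\Omega - \bar{x})^{\circ}$ for every active $j$ (so it holds with any $\mu_j>0$). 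Second, $N_{g_j}(\bar{x}) \subset (\Omega-\bar{x})^{\circ}$ for every active $j$ (used when $\mu_j=0$).

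\textbf{Step 1 (strong subdifferential part).} Take $j \in I(\bar{x})$ and $\xi \in \partial^{K_j}_{\beta_j,\gamma_j} g_j(\bar{x})$, and let $y \in \Omega$. Then $y \in K_j$ by hypothesis, and $g_j(y) \le g(y) \le 0 = g_j(\bar{x})$. Hence $\max\{g_j(y), g_j(\bar{x})\} = g_j(\bar{x})$, and \eqref{def:subd} gives, for all $\lambda \in (0,1]$,
\begin{align*}
0 \geq \frac{\lambda}{\beta_j}\langle \xi, y-\bar{x}\rangle + \frac{\lambda}{2}\Big(\gamma_j - \frac{\lambda}{\beta_j} - \lambda\gamma_j\Big)\|y-\bar{x}\|^2 .
\end{align*}
Dividing by $\lambda$ and letting $\lambda \to 0^+$ yields $\langle \xi, y-\bar{x}\rangle \le -\tfrac{\beta_j\gamma_j}{2}\|y-\bar{x}\|^2 \le 0$. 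This is the same computation that underlies $(P5)$. Thus $\xi \in (\Omega-\bar{x})^{\circ}$, and the positive multiple $\mu_j \xi$ also lies in this cone.

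\textbf{Step 2 (normal operator part).} For $j$ active with $\mu_j = 0$, note that $\Omega \subset S_{g_j}(\bar{x})$ because $g_j(y) \le g(y)\le 0 = g_j(\bar{x})$ for all $y\in\Omega$. Taking polars reverses the inclusion, so $N_{g_j}(\bar{x}) = (S_{g_j}(\bar{x}) - \bar{x})^{\circ} \subset (\Omega - \bar{x})^{\circ}$.

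\textbf{Step 3 (assembly).} The sum over $j$ of elements of a convex cone stays in the cone, so every set in the union lies in $N(\Omega,\bar{x})$, and taking closure preserves the inclusion. One degenerate case needs care: if some $\partial^{K_j}_{\beta_j,\gamma_j} g_j(\bar{x})$ is empty with $\mu_j>0$, the corresponding sum is empty by the convention $A+\emptyset=\emptyset$, and the inclusion is trivial. The only genuine subtlety is the bookkeeping of which index set is active and the use of convexity of $\Omega$ to identify the limiting normal cone with the polar of $\Omega-\bar{x}$; I expect no real obstacle beyond that.
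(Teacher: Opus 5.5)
Your proof is correct and follows essentially the same route as the paper: both reduce the claim to showing that each $\partial^{K_j}_{\beta_j,\gamma_j} g_j(\bar x)$ and each $N_{g_j}(\bar x)$ lies in the closed convex cone $N(\Omega,\bar x)$. Both arguments use $\Omega\subset S_{g_j}(\bar x)\cap K_j$ for active $j$, and your reading of $I(\bar x)$ as $\{j: g_j(\bar x)=0\}$ is the intended one; your explicit $\lambda\to 0^+$ step in \eqref{def:subd}, giving $\langle \xi, y-\bar x\rangle\le -\tfrac{\beta_j\gamma_j}{2}\|y-\bar x\|^2\le 0$, spells out a detail the paper leaves implicit.
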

     \begin{proof}
Since $N(\Omega, \bar{x})$ is a closed convex cone, it suffices to show that $\partial_{\beta_{j}, \gamma_{j}}^{K_{j}} g_{j}(\bar{x})$ and $N_{g_{j}}(\bar{x})=(S_{g_{j}}(\bar{x})-\bar{x})^{\circ}$ are subsets of $N(\bar{x}, \Omega)$ for all $j \in I(\bar{x})$. Since $\Omega \subset S_{g_{j}}(\bar{x})$ for every $j \in I(\bar{x})$, we have  $(S_{g_{j}}(\bar{x})-\bar{x})^{\circ} \subset  N(\Omega, \bar{x})$. 

Take $w \in \partial_{\beta_{j}, \gamma_{j}}^{ K_{j}} g_{j} (\bar{x})$ for some $j \in I(\bar{x})$. Then, for every $y \in K_{j}$,
   \begin{align}\label{eqn:wicone}
 \frac{\lambda}{\beta_{j}} \langle w, y - \bar{x} \rangle \leq \max\{g_{j} (y), g_{j} (\bar{x})\} - \frac{\lambda (1 - \lambda) \gamma_{j}}{2} \| y - \bar{x}\|^{2} + \frac{\lambda^{2}}{2 \beta_{j}} \| y - \bar{x}\|^{2}. 
   \end{align}

 Since $\Omega \subset S_{g_{j}} (\bar{x})$ and $\Omega \subset K_{j}$, \eqref{eqn:wicone} implies $\langle w, y-\bar{x} \rangle \leq -\frac{1}{2} \beta_{j} \gamma_{j}\|y-\bar{x}\|^{2}\leq 0$ for all $y \in \Omega$. As a consequence, $w \in  N(\Omega, \bar{x})$.  
\end{proof}

The lower estimative of the normal cone given by \eqref{eqn:normalconemax} may be strict as the following example shows. 

\begin{example}\label{ex:strictnormalcone}(The inclusion in \eqref{eqn:normalconemax} may be strict).
Consider the extended-valued functions $g_{1}:\mathbb{R} \rightarrow \overline{\mathbb{R}}$ and $g_{2}:\mathbb{R} \rightarrow \overline{\mathbb{R}}$ defined as 
\begin{align*}
 g_{1} (x) = \left\{
 \begin{array}{cl}
  0 & {\rm if} ~ x=0,  \\
  -x^{-1} & {\rm if} ~ 0 < x \leq 1, \\
  + \infty & {\rm otherwise},
  \end{array}
  \right.
  \text{ and }\
  g_{2}(x) = \left\{
  \begin{array}{cl}
  0 & {\rm if} ~ x=0, \\
  + \infty  & {\rm if} ~ 0 < x \leq 1, \\
  - x^{-1} & {\rm otherwise}.
  \end{array}
  \right.  
 \end{align*} 
 
Set $K_{1} = K_{2} = \mathbb{R}$, $\gamma= \gamma_{1} = \gamma_{2} = 1$ and $\beta=1$. Take $\bar{x} = 0$. Then, by direct calculation, we have $\partial_{1, 1}^{\mathbb{R}} g_{1}(\bar{x}) = (-\infty, -1/2]$ and $\partial_{1, 1}^{\mathbb{R}} g_{2} (\bar{x}) = \emptyset$, while $N_{g_{1}} (0) = [0,1]^{\circ} = \mathbb{R}_{-}$ and $N_{g_{2}} (0) = (\{0\} \cup (1, \infty))^{\circ} = \mathbb{R}_{-}$.

On the other hand, since $g := \max\{g_{1}, g_{2}\}$, we get $g(0)=0$ and $g(x)=+\infty$, $\forall x \neq 0$. 
Thus, $\Omega=\{0\}$, $N(\Omega, \bar{x})=\mathbb{R}$ and the inclusion in \eqref{eqn:normalconemax} is strict. 
\end{example}

In order to obtain the equality in \eqref{eqn:normalconemax} under mild assumptions, we first study some properties of the directional derivatives. To that end, we define the {\it upper Hadamard directional derivative} of $g$ at $x \in {\rm dom}\ g$ in the direction $d \in \mathbb{R}^{n}$ by  
\begin{align*}
 g^{H+} (x; \bar{d}) := \limsup_{t \rightarrow 0^{+}, \ d\rightarrow \bar{d}} \frac{g(x +td) - g(x)}{t}.
\end{align*}
Clearly, $g^{H+} (x; \bar{d})$ is well-defined if $g$ is locally Lipschitz at $x$. Certainly, for every $d \in \mathbb{R}^{n}$ one has $g^{D+} (x; \bar{d}) \leq g^{H+} (x; \bar{d})$.
The next technical lemma can be seen as a complement to \cite[Proposition 9]{Kab-Lara-2}. 
The proof follows from \cite[Proposition 9]{Kab-Lara-2} with the appropriate modifications. 
  
\begin{lemma}\label{lem:derivative}
 Let $\bar{x} \in {\rm dom}\,g$ and $\partial_{\beta, \gamma}^{K} g(\bar{x})$ be the strong subdifferential for some $K$, $\beta>0$ and $\gamma \geq 0$. Then, for every $d \in T(K, \bar{x})$, we have
   \begin{align*}
   \lambda \langle w, d \rangle \leq \beta \max\{g^{H+} (\bar{x}; d), 0\}, 
   \ \ \forall \lambda \in [0, 1] \text{ and }\forall w \in \partial_{\beta, \gamma}^{K} g(\bar{x}).
   \end{align*}
\end{lemma}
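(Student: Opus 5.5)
We argue directly from the defining inequality \eqref{def:subd}, with $y$ moving along a tangent sequence. Fix $w \in \partial_{\beta, \gamma}^{K} g(\bar{x})$, $\lambda \in [0,1]$ and $d \in T(K, \bar{x})$. The case $\lambda = 0$ is trivial, so assume $\lambda \in (0,1]$. By the definition of the tangent cone there exist $t_{k} \downarrow 0$ and $d_{k} \to d$ such that $y_{k} := \bar{x} + t_{k} d_{k} \in K$ for all $k$. Inserting $y = y_{k}$ into \eqref{def:subd} gives
\begin{align*}
\frac{\lambda}{\beta} t_{k} \langle w, d_{k} \rangle \leq \max\{g(y_{k}), g(\bar{x})\} - g(\bar{x}) - \frac{\lambda}{2}\Big(\gamma - \frac{\lambda}{\beta} - \lambda\gamma\Big) t_{k}^{2} \|d_{k}\|^{2}.
\end{align*}
The first term on the right equals $\max\{g(\bar{x}+t_{k}d_{k}) - g(\bar{x}), 0\}$, which is finite or $+\infty$. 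If it is $+\infty$ along a subsequence, the inequality holds trivially there; in the bound below this corresponds to $g^{H+}(\bar{x}; d) = +\infty$, where the claim is also trivial.

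Next divide by $t_{k} > 0$. The quadratic remainder becomes $O(t_{k}\|d_{k}\|^{2})$, and it tends to $0$ because $(d_{k})$ is bounded. Using the identity $\max\{a,0\}/t = \max\{a/t, 0\}$ for $t>0$, we get
\begin{align*}
\frac{\lambda}{\beta}\langle w, d_{k} \rangle \leq \max\left\{ \frac{g(\bar{x}+t_{k}d_{k}) - g(\bar{x})}{t_{k}}, 0 \right\} + o(1).
\end{align*}
Now take $\limsup_{k\to\infty}$. The left side converges to $\frac{\lambda}{\beta}\langle w, d\rangle$. On the right, the limsup of the difference quotients along $t_{k}\to 0^{+}$, $d_{k}\to d$ is at most $g^{H+}(\bar{x}; d)$ by definition of the upper Hadamard derivative. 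Since $s \mapsto \max\{s,0\}$ is nondecreasing and continuous, the limsup of the maximum is bounded by $\max\{g^{H+}(\bar{x};d), 0\}$. Multiplying by $\beta > 0$ yields the claimed inequality.

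The main obstacle is the passage to the limit. The direction perturbations $d_{k}$ are forced on us by the tangent cone, since $\bar{x} + t d$ itself need not lie in $K$. This is exactly why the Hadamard derivative replaces the Dini derivative used in \cite[Proposition 9]{Kab-Lara-2}, where $d$ was taken feasible. We also have to handle possibly infinite values of $g$ and of $g^{H+}$ with the usual extended-real conventions, and confirm that the quadratic term vanishes; the latter follows from boundedness of $(d_{k})$ and does not depend on the sign of $\gamma - \lambda/\beta - \lambda\gamma$.
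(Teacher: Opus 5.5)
Your argument is correct and is essentially the proof the paper intends. The paper only says the lemma follows from \cite[Proposition 9]{Kab-Lara-2} with appropriate modifications, and that modification is exactly yours: insert a tangent sequence $y_k=\bar{x}+t_kd_k\in K$ (instead of a feasible ray $\bar{x}+td$) into the defining inequality, divide by $t_k$, and pass to the $\limsup$, which is why the upper Hadamard derivative replaces the Dini one.
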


Finally, consider the next definition. 

\begin{definition}\label{def:fhregular}
 Let $g: \mathbb{R} \rightarrow \overline{\mathbb{R}}$ be a proper function, $K \subset \mathbb{R}^{n}$, $\beta>0$, $\gamma \geq 0$ and $x \in {\rm dom}\,g \cap K$. We say that $g$ is $F_{H}$-regular at $x$ on $(K, \beta, \gamma)$, if for every $d \in \mathbb{R}^{n}$, the following implication holds
 \begin{align*}
  g^{H+} (x; d)\geq 0 \implies \sigma (\partial_{\beta, \gamma}^{K} g(x);d) \geq 0.
 \end{align*}
\end{definition}

\begin{remark}\label{remark:fhregular}
    We point out that a function can be $F_{H}$-regular for some $K$ but not for other sets. In fact, consider $g:\mathbb{R}\rightarrow \mathbb{R}$ defined as 
\begin{align*}
 g(x) = \left\{
 \begin{array}{cl}
  0 & {\rm if} ~ x \geq 0,  \\
 -x & {\rm if } ~ x < 0.
 \end{array}
 \right. 
\end{align*} 

Take $\bar{x} =0$. Note that $g^{H+} (0;d) = 0$ if $d\geq 0$,  and $g^{H+} (0;d) = -d$ if $d < 0$. 
Now, take $K_{1}=[-1, 0]$ and $\beta = \gamma=1$, then $\partial_{\beta, \gamma}^{K_{1}} g(0) = [-1, \infty)$ and hence $\sigma (\partial_{1, 1}^{K_{1}} g(0);d) \geq 0$ for all $d$. Thus, $g$ is $F_{H}$-regular at $\bar{x}$ on $(K_{1}, 1, 1)$.

On the other hand, if $K_{2}=[0, 1]$, then $\partial_{1, 1}^{K_{2}} g(0)=(-\infty, -1/2]$ and hence $\sigma(\partial_{1, 1}^{K_{2}} g(0);d)<0$ when $d>0$. 
Thus, $g$ is not $F_{H}$-regular at $\bar{x}$ on $(K_{2}, 1, 1)$.
\end{remark}

We continue with the main result of this section that characterizes the normal cone of $\Omega$ by using strong and horizon subdifferentials. 

\begin{theorem}\label{theo:normalcone}
Let $\{g_{j}\}_{j \in I}$ be a finite number of extended-valued functions on $\mathbb{R}^{n}$,  $g := \sup_{j \in I} g_{j}$, $\bar{x} \in {\rm dom}\,g$, $\{K_{j}\}_{j}$ be a family of closed convex sets, $\beta_{j}>0$ and $\gamma_{j}\geq 0$ for all $j \in I$. Suppose that $\Omega \subset \cap_{j \in I(\bar{x})}K_{j}$ and that the following statements are satisfied:
\begin{enumerate}
 \item[$(a)$] The Slater-type condition \eqref{eqn:slatertypecone} holds at $\bar{x}$ for $\partial_{\beta_{j}, \gamma_{j}}^{K_{j}} g_{j}(\bar{x})$ for all $j$, that is, 
 \begin{align}\label{eqn:slatertypecone}\tag{$S_{N}$}
 \text{there exists }\widehat{d}\in \cap_{j \in I(\bar{x})} T(K_{j}, \bar{x}) \text{ such that } \sigma(\partial_{\beta_{j}, \gamma_{j}}^{K_{j}} g_{j}(\bar{x}), \widehat{d})<0 \ \forall j \in I(\bar{x}). 
 \end{align}

 \item[$(b)$] The next inclusion holds 
 \begin{align*}
 \cap_{j \in I(\bar{x})}[\text{cone }\partial^{\infty} g_{j}(\bar{x})]^{\circ} \cap \cap_{j \in I(\bar{x})}[\text{cone }\partial_{\beta_{j}, \gamma_{j}}^{K_{j}}g_{j}(\bar{x})]^{\circ}\subset \cap_{j \in I(\bar{x})} T(K_{j}, \bar{x}).   
 \end{align*}

 \item[$(c)$] For every $j \in I(\bar{x})$, 
 the inclusion $\partial^{\infty}g_{j}(\bar{x})\subset (S_{g_{j}}(\bar{x})-\bar{x})^{\circ}$ holds, $g_{j}$ is upper semicontinuous (usc) at $\bar{x}$ and $g_{j}$ is $F_{H}$-regular at $\bar{x}$ on $(K_{j}, \beta_{j}, \gamma_{j})$.

 \item [$(d)$]  For every $j \in I(\bar{x})$, one of the following conditions holds:
      \begin{itemize}
          \item [$(d.1)$] the strong subdifferential $\partial_{\beta_{j}, \gamma_{j}}^{K_{j}} g_{j}(\bar{x})$ is compact;
          \item [$(d.2)$] the $\text{cone}(\partial_{\beta_{j}, \gamma_{j}}^{K_{j}} g_{j}(\bar{x}))$ is closed and $0 \notin \partial_{\beta_{j}, \gamma_{j}}^{K_{j}} g_{j}(\bar{x})$.
      \end{itemize}
\end{enumerate}
Then 
\begin{align}\label{eqn:hullnormalconemaxequality}
 N(\Omega, \bar{x}) = \overline{ \text{co } \left(
 \bigcup_{\mu \in \mathbb{R}_{+}^{|I(\bar{x})|}} 
 \left\{\sum_{j\in I(\bar{x}): \mu_{j}>0} \mu_{j} \partial_{\beta_{j}, \gamma_{j}}^{K_{j}} g_{j}(\bar{x}) + \sum_{j\in I(\bar{x}): \mu_{j}=0} \partial^{\infty}g_{j}(\bar{x}) \right\} \right)},   
 \end{align}  
where $I(\bar{x}) := \{j \in I: \,g_{j}(\bar{x}) = 0\}$. \end{theorem}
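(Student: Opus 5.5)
We prove the two inclusions separately. Denote by $\mathcal{M}$ the set on the right-hand side of \eqref{eqn:hullnormalconemaxequality}.

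\emph{The inclusion $\mathcal{M} \subset N(\Omega, \bar{x})$.} This follows the argument of Proposition \ref{propo:normalcone}. Since $\Omega$ is convex, $N(\Omega,\bar{x})$ is a closed convex cone. It therefore suffices to show that, for each $j \in I(\bar{x})$, both $\partial_{\beta_j,\gamma_j}^{K_j} g_j(\bar{x})$ and $\partial^{\infty} g_j(\bar{x})$ lie in $N(\Omega,\bar{x})$.

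For the strong subdifferential, the proof of Proposition \ref{propo:normalcone} already gives the claim. The relevant facts are $\Omega \subset S_{g_j}(\bar{x})$ (because $g_j \le g \le 0 = g_j(\bar{x})$ on $\Omega$) and $\Omega \subset K_j$.

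For the horizon subdifferential, hypothesis $(c)$ gives $\partial^\infty g_j(\bar{x}) \subset (S_{g_j}(\bar{x})-\bar{x})^\circ$. Since $\Omega \subset S_{g_j}(\bar{x})$, this set is contained in $(\Omega-\bar{x})^\circ = N(\Omega,\bar{x})$.

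\emph{The inclusion $N(\Omega,\bar{x}) \subset \mathcal{M}$.} We argue by contradiction with a separation argument modelled on Proposition \ref{propo:reverseinclusionmax}. Note that $\mathcal{M}$ is a closed convex set containing $0$ (take all $\mu_j=0$; by $(c)$ and usc each $\partial^\infty g_j(\bar{x})$ contains $0$ as a cone). It is also a cone, so $\mathcal{M}$ is a closed convex cone. Suppose $v \in N(\Omega,\bar{x})\setminus \mathcal{M}$. Separation gives $d \neq 0$ with
\[
\langle v,d\rangle>0 \quad\text{and}\quad \langle w,d\rangle \le 0 \ \text{ for all } w \in \mathcal{M}.
\]
In particular, $d$ lies in $[\mathrm{cone}\,\partial^{K_j}_{\beta_j,\gamma_j} g_j(\bar{x})]^\circ$ and in $[\mathrm{cone}\,\partial^\infty g_j(\bar{x})]^\circ$ for every $j\in I(\bar{x})$. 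Hypothesis $(b)$ then yields $d \in \cap_j T(K_j,\bar{x})$.

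Next we perturb $d$ using the Slater direction $\widehat{d}$ from \eqref{eqn:slatertypecone}, setting $d_\theta := d+\theta\widehat{d}$. For small $\theta>0$ we still have $\langle v,d_\theta\rangle>0$. Moreover, $d_\theta \in T(K_j,\bar{x})$, because $K_j$ is convex and so $T(K_j,\bar{x})$ is a convex cone. We also need the strict inequality $\sigma(\partial^{K_j}_{\beta_j,\gamma_j}g_j(\bar{x}),d_\theta)<0$:
\begin{itemize}
\item Under $(d.1)$ this follows from compactness, since $\sigma(\cdot,d)\le 0$ and $\sigma(\cdot,\widehat{d})<0$ give $\sigma(\cdot,d_\theta) \le \sigma(\cdot,d)+\theta\,\sigma(\cdot,\widehat d)<0$.
\item Under $(d.2)$, closedness of the cone together with $0 \notin \partial^{K_j}_{\beta_j,\gamma_j}g_j(\bar{x})$ lets us argue on a compact base of the cone in the same way, so the supremum stays strictly negative.
\end{itemize}
By the $F_H$-regularity in $(c)$, the strict inequality $\sigma<0$ forces $g_j^{H+}(\bar{x};d_\theta)<0$ for every $j \in I(\bar{x})$.

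We now show $d_\theta$ is a descent direction into $\Omega$ and derive the contradiction. Since $g_j^{H+}(\bar{x};d_\theta)<0$, the definition of the upper Hadamard derivative gives $g_j(\bar{x}+td)<0$ for all $d$ near $d_\theta$ and small $t>0$, for each active $j$. For inactive $j \notin I(\bar{x})$ we have $g_j(\bar{x})<0$, and upper semicontinuity gives $g_j<0$ near $\bar{x}$. Here we must extend the usc assumption in $(c)$ to the inactive indices, or read it as holding for all $j$; I expect this to require care. Together these give $\bar{x}+td_\theta\in\Omega$ for small $t>0$. But $v\in N(\Omega,\bar{x})=(\Omega-\bar{x})^\circ$, so $\langle v,d_\theta\rangle\le 0$, contradicting $\langle v,d_\theta\rangle>0$.

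The main obstacle is justifying the strict negativity of the support functions after perturbation, in particular in case $(d.2)$, where the strong subdifferential may be unbounded. There I would normalize the elements of $\partial^{K_j}_{\beta_j,\gamma_j}g_j(\bar{x})$ and use closedness of the cone to control directions at infinity. The second delicate point is the role of $\partial^\infty g_j$ in the separation step: it enters only through hypothesis $(b)$, to place $d$ in the tangent cones.
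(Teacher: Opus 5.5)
Your proof is correct, provided upper semicontinuity at $\bar{x}$ is assumed for the inactive $g_j$ as well. The paper's proof makes the same assumption: it invokes usc of $g_j$ for $j\notin I(\bar{x})$, although $(c)$ only grants it for $j\in I(\bar{x})$.

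The extra assumption cannot be dropped. Take $n=1$, $g_1(x)=-x$, $g_2=-1$ on $(-\infty,0]$ and $g_2=1$ on $(0,\infty)$, with $\bar{x}=0$, $K_1=[0,1]$ and $\beta_1=\gamma_1=1$. Then $\partial^{K_1}_{1,1}g_1(0)=(-\infty,-1/2]$, and $(a)$--$(d)$ all hold (using $(d.2)$). Yet $\Omega=\{0\}$, so $N(\Omega,0)=\mathbb{R}$ while the right-hand side equals $(-\infty,0]$.

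Apart from this, your skeleton is the paper's: first inclusion via Proposition~\ref{propo:normalcone} and $(c)$; separation; $(b)$; the perturbation $d_\theta=d+\theta\widehat{d}$; feasibility of $\bar{x}+td_\theta$; contradiction with $v\in(\Omega-\bar{x})^{\circ}$. You use $F_H$-regularity in contrapositive form, whereas the paper applies it along a sequence $r_k\to0^+$ violating feasibility; that difference is cosmetic.

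The genuine difference is how you obtain $\sigma(\partial^{K_j}_{\beta_j,\gamma_j}g_j(\bar{x}),d_\theta)<0$. The paper only records $\langle w,d_\theta\rangle<0$ pointwise. It then gets $\sigma(\cdot,d_\theta)\ge 0$ from $F_H$-regularity, hence $\sigma(\cdot,d_\theta)=0$, and needs $(d.1)$, or $(d.2)$ via $d_\theta\in\text{int}\,P^{\circ}$, to reach a contradiction.

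Your bound $\sigma(\cdot,d_\theta)\le\sigma(\cdot,d)+\theta\,\sigma(\cdot,\widehat{d})$ is just subadditivity and positive homogeneity of a support function. It holds for an arbitrary set and uses no compactness, so it covers $(d.2)$ verbatim. The ``main obstacle'' you anticipate therefore does not exist, and hypothesis $(d)$ is superfluous.

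Similarly, $d_\theta\in\cap_j T(K_j,\bar{x})$ is never used afterwards, in your proof or the paper's, because $F_H$-regularity is imposed for every $d\in\mathbb{R}^n$. So $(b)$ and the tangency requirement in $(a)$ are idle.

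One small correction: your reason for $0\in\mathcal{M}$ is not valid. In the paper's setting $\partial^{\infty}g_j(\bar{x})$ can be empty (see Example~\ref{ex:fj}), and neither $(c)$ nor usc prevents this. The correct reason is that $\mathcal{M}$ is a nonempty closed cone. Nonemptiness comes from $F_H$-regularity at $d=0$: there $g_j^{H+}(\bar{x};0)\ge 0$, which forces $\sigma(\partial^{K_j}_{\beta_j,\gamma_j}g_j(\bar{x}),0)\ge 0$. Hence every strong subdifferential of an active $g_j$ is nonempty.
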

    \begin{proof}
First, since $\partial^{\infty} g_{j}(\bar{x}) \subset (S_{g_{j}} (\bar{x}) - \bar{x})^{\circ}$ for all $j \in I(\bar{x})$, the inclusion described in \eqref{eqn:normalconemax} holds.
As $N(\Omega, \bar{x})$ is a closed convex set, the right-hand side of \eqref{eqn:hullnormalconemaxequality} is included in $N(\Omega, \bar{x})$. 

Let us prove the equality in \eqref{eqn:hullnormalconemaxequality} under the above hypotheses. Suppose by contradiction that there exists $v \in N(\Omega, \bar{x})$ that does not belong to the right side of \eqref{eqn:hullnormalconemaxequality}. 
Thus, by the Hahn-Banach theorem, there exists $d \neq 0$ satisfying 
$$\langle v, d \rangle > 0 \geq \langle w, d \rangle, ~ \forall ~ w \in \bigcup_{j \in I(\bar{x})} \partial_{\beta, \gamma_{j}}^{K_{j}} g_{j} (\bar{x}) \cup \partial^{\infty} g_{j} (\bar{x}).$$

The latter implies $d \in \cap_{j \in I(\bar{x})} [\text{cone } \partial^{\infty} g_{j} (\bar{x})]^{\circ} \cap \cap_{j \in I(\bar{x})}[\text{cone } \partial_{\beta_{j}, \gamma_{j}}^{K_{j}} g_{j} (\bar{x})]^{\circ}$. So, by (b), we have $d \in \cap_{j \in I(\bar{x})} T(K_{j}, \bar{x})$.

From the Slater-type condition \eqref{eqn:slatertypecone} and due to the convexity of $K_{j}$, $\forall j$; we can find $d_{\theta}:=d+\theta \widehat{d} \in  \cap_{j \in I(\bar{x})}T(K_{j}, \bar{x})$ for some $\theta>0$ small enough such that 
\begin{align}\label{eqn:hbsupremumcone}
\langle v, d_{\theta} \rangle >0 > \langle w, d_{\theta} \rangle,  \ \ \forall ~ w \in \bigcup_{j \in I(\bar{x})}\partial_{\beta_{j}, \gamma_{j}}^{K_{j}} g_{j}(\bar{x}).    
\end{align}

 Now, we claim that $\bar{x} + rd_{\theta} \in \Omega$ for some $r>0$ small enough. Indeed, suppose by contradiction that there exist $j \in I$ and a sequence $\{r_{k}\} \subset \mathbb{R}_{++}$ with $r_{k} \rightarrow 0^{+}$ such that $g_{j}(\bar{x} + r_{k} d_{\theta}) > g_{j} (\bar{x})$. 
  If $j \notin I(\bar{x})$, then the upper semicontinuity of $g_{j}$ at $\bar{x}$ ensures that $g_{j}(\bar{x} + rd_{\theta})<0$ for every $r$ sufficiently small, a contradiction and thus $j \in I(\bar{x})$. 
  
 We point out that $g_{j}(\bar{x} + r_{k} d_{\theta}) > g_{j} (\bar{x})$ implies that $g_{j}^{H+}(\bar{x}, d_{\theta})\geq \limsup \frac{1}{r_{k}}(g_{j}(\bar{x} + r_{k} d_{\theta})-g_{j} (\bar{x}))\geq 0$.
 As $g_{j}$ is $F_{H}$-regular at $\bar{x}$, we have $\sigma(\partial_{\beta_{j}, \gamma_{j}}^{K_{j}} g_{j} (\bar{x}), d_{\theta})\geq 0$.
 Furthermore, as consequence of  \eqref{eqn:hbsupremumcone}, we get $\sigma(\partial_{\beta_{j}, \gamma_{j}}^{K_{j}} g_{j} (\bar{x}), d_{\theta})=0$.
 
 Now, we split the proof depending if for such $j$, the statement ($d.1$) or $(d.2)$ holds. 
    \begin{itemize}
        \item If ($d.1$) holds then, in this case, the compactness of $\partial_{\beta_{j}, \gamma_{j}}^{K_{j}} g_{j} (\bar{x})$ and \eqref{eqn:hbsupremumcone} imply that $\sigma(\partial_{\beta_{j}, \gamma_{j}}^{K_{j}} g_{j} (\bar{x}), d_{\theta})<0$ which contradicts $\sigma(\partial_{\beta_{j}, \gamma_{j}}^{K_{j}} g_{j} (\bar{x}), d_{\theta})\geq 0$.
        \item If ($d.2$) is fulfilled then $P:=\text{cone}(\partial_{\beta_{j}, \gamma_{j}}^{K_{j}} g_{j}(\bar{x}))$ is a pointed closed convex cone. 
        By \cite[Exercise 6.22]{rwets}, the latter implies that $P^{\circ}$ has a nonempty interior.
        Furthermore, \cite[Exercise 6.22]{rwets} says that $\bar{d} \in \text{int }P^{\circ}$ if and only if 
        $\langle \bar{v}, \bar{d}\rangle <0$, $\forall \bar{v} \in P$ with $\bar{v}\neq 0$.
        From above and by \eqref{eqn:hbsupremumcone}, we get $d_{\theta} \in \text{int }P^{\circ}$. So, there exists $\delta>0$ such that $d_{\theta}+\varepsilon \hat{v} \in P^{\circ}$, for all $\varepsilon \in [0, \delta]$ and $\|\hat{v}\|\leq 1$. 
        Hence, we have 
            \begin{align}\label{eqn:conecone}
                \langle w, d_{\theta}+\varepsilon \hat{v}\rangle \leq 0, \ \forall w \in P=\text{cone}(\partial_{\beta_{j}, \gamma_{j}}^{K_{j}} g_{j}(\bar{x})), \ \forall \hat{v} \text{ with }\|\hat{v}\|\leq 1.
            \end{align}
        As $\sigma(\partial_{\beta_{j}, \gamma_{j}}^{K_{j}} g_{j} (\bar{x}),  d_{\theta})=0$, we get that \eqref{eqn:conecone} implies $0\leq \sigma(\partial_{\beta_{j}, \gamma_{j}}^{K_{j}} g_{j} (\bar{x}), \hat{v})$, $\forall \ \hat{v}$.  
        Now, by ($d.2$), one has $0 \notin \partial_{\beta_{j}, \gamma_{j}}^{K_{j}} g_{j} (\bar{x})$. So, the strong Hahn-Banach theorem implies the existence of $\bar{\alpha} \in \mathbb{R}$ and $\bar{d}$ (with $\|\bar{d}\|\leq 1$) such that $\langle w, \bar{d} \rangle \leq \bar{\alpha} <0$, $\forall w \in \partial_{\beta_{j}, \gamma_{j}}^{K_{j}} g_{j} (\bar{x})$. 
        Taking the supremum, one has $\sigma(\partial_{\beta_{j}, \gamma_{j}}^{K_{j}} g_{j} (\bar{x}), \bar{d})\leq \bar{\alpha}<0$ which is a contradiction.   
    \end{itemize}
 Therefore, in any case, there exists $r>0$ such that $\bar{x}+rd_{\theta} \in \Omega$. 
 
 Finally, from \eqref{eqn:hbsupremumcone}, we get $\langle v, (\bar{x} + r d_{\theta}) - \bar{x} \rangle = r \langle v, d_{\theta} \rangle>0 $ and thus $v \notin N (\Omega, \bar{x})$, contradicting the main supposition. 
\end{proof}

\begin{remark}\label{remark:normalcone}
Some remarks about the hypotheses of Theorem \ref{theo:normalcone} are in order:

\begin{enumerate}
    \item[(a)] A simple situation where \eqref{eqn:slatertypecone} holds is the following:
\begin{itemize}
  \item  Suppose that $\gamma_{j}>0$ and $K_{j}$ is convex for all $j$. 
       If there exists $\bar{y}\in \cap_{j\in I(\bar{x})} S_{g_{j}}(\bar{x})\cap \cap_{j\in I(\bar{x})} K_{j}$ with $\bar{y} \neq \bar{x}$, then \eqref{eqn:slatertypecone} holds.
  \end{itemize}
    In fact, we affirm that $\widehat{d}:=\bar{y}-\bar{x}$ satisfies \eqref{eqn:slatertypecone}.
    Due to the convexity of $K_{j}$, we get $\widehat{d} \in T(K_{j}, \bar{x})$ for all $j \in I(\bar{x})$. 
    Take any $w \in \partial_{\beta_{j}, \gamma_{j}}^{K_{j}} g_{j} (\bar{x})$. 
    Thus, it follows from \eqref{eqn:wj} that $\langle w, \widehat{d} \rangle \leq -(\beta_{j} \gamma_{j}/2) \|\widehat{ d}\|^{2} < 0$. 
    Hence, for every $j\in I(\bar{x})$, one has 
       \begin{align*}
    \sigma (\partial_{ \beta_{j}, \gamma_{j}}^{K_{j}} g_{j} (\bar{x}), \widehat{d}) \leq -(\beta_{j} \gamma_{j}/2) \|\widehat{ d}\|^{2} < 0.        
       \end{align*}

    \item [(b)] The inclusion given by Theorem \ref{theo:normalcone}(b) can be simplified under some assumptions. Indeed, set $K_{j} = S_{g_j} (\bar{x})$ for all $j \in I(\bar{x})$. By following the proof of \cite[Proposition 15]{Kab-Lara-2} under the proper modification, we see that if $\text{cone }(\partial_{\beta, \gamma_{j}}^{K_{j}} g_{j}(\bar{x}))$ is a closed set with $0 \notin \partial_{\beta, \gamma_{j}}^{K_{j}} g_{j}(\bar{x})$, then $S_{g_j}(\bar{x})$ is convex and $g_{j}$ is $F_{H }$-regular at $\bar{x}$ on $K_{j}$, thus $N(S_{g_j} (\bar{x}), \bar{x})\subset \text{cone }(\partial_{\beta, \gamma_{j}}^{K_{j}} g_{j}(\bar{x}))$ and  
        \begin{align}
     \left(\text{cone }\partial_{\beta, \gamma_{j}}^{K_{j}} g_{j} (\bar{x}) \right)^{\circ} \subset N(S_{g_j} (\bar{x}), \bar{x})^{\circ} = T(S_{g_j} (\bar{x}), \bar{x}).       
        \end{align}
    Then the inclusion given by $(b)$ is trivially satisfied. 
    
    \item [(c)] Note that if $\partial^{\infty} g_{j} (\bar{x})$ is convex, then $\partial^{\infty}g_{j} (\bar{x}) \subset N_{g_{j}} (\bar{x})$ and $K_{j} = S_{g_{j}} (\bar{x})$ for all $j \in I(\bar{x})$. As a consequence, the convex hull in \eqref{eqn:hullnormalconemaxequality} can be removed.
  \end{enumerate}
\end{remark}

We mention Theorem \ref{theo:normalcone} subsumes some of the results given in \cite[Lemma 28]{Kab-Lara-2}.
In fact, in the case, that $K_{j}:=\Omega$, $\forall j$; $\beta_{j}:=\beta$, $\gamma_{j}:=\gamma$, one has 
    \begin{align*}
        \partial_{\beta, \gamma}^{\Omega} g_{j}(\bar{x})=
          \{v \in \mathbb{R}^{n}: 
            \langle v, y-\bar{x}\rangle 
            \leq 
            -\frac{1}{2}\beta \gamma \|y-\bar{x}\|^{2}, \ \forall y \in \Omega 
          \}, \text{ for every } j \in I(\bar{x}). 
    \end{align*}   
 Further, we assume that $\partial^{\infty}g_{j}(\bar{x})=\emptyset$, $\forall j$ and $\Omega$ is not a singleton. 
 Thus under such assumptions, hypotheses (a) and (b) of Theorem \ref{theo:normalcone} are trivially satisfied. 
 We also note that the inclusion $\emptyset=\partial^{\infty} g_{j}(\bar{x}) \subset (S_{g_{j}}(\bar{x})-\bar{x})^{\circ}$ in hypotheses (c) of Theorem \ref{theo:normalcone} holds. 
 Finally, under the fulfillment of the hypotheses (c) and (d) of Theorem \ref{theo:normalcone}, one has the normal cone reduces to $N(\Omega, \bar{x})=\text{cone }  \partial_{\beta, \gamma}^{\Omega} g_{j}(\bar{x})$ for any $j\in I(\bar{x})$.   

 We end this section, by given an example where \eqref{eqn:hullnormalconemaxequality} holds where the convex sets $K_{j}$, $\forall j$ are different of $\Omega$. 
 In fact, we consider the feasible set $\Omega=\{x\in \mathbb{R}^{n}: g(x)\leq 0\}$ where $g:\mathbb{R}\rightarrow \mathbb{R}\cup\{\infty\}$ is defined in Example \ref{ex:itemii}. In this case, $\Omega=[-1, 0]$ and take $K=[-1, 1]$. 
 Here, $\partial_{1, 1}^{[-1, 1]}g(0)=[4^{-1}, 2]$ and $\partial_{1, 1}^{[-1, 0]}g(0)=[1/2, \infty)$.
 We note that   $N(\Omega, 0)=\mathbb{R}_{+}=\text{cone }\partial_{1, 1}^{[-1, 1]}g(0)=\text{cone }\partial_{1, 1}^{[-1, 0]}g(0)$. 

\section{Optimality Conditions}\label{sec:FJKKT}

We start this section with the next technical theorem relating the subdifferentials of the objective function with the normal cone of the feasible set at local minimizers. 

   \begin{theorem}\label{theo:fjnecessary}
Assume that $f$ is a lsc function and $\mathcal{C}$ is a closed convex set. Let $\bar{x} \in \mathcal{C}$ be a local solution of minimize $f$ over $\mathcal{C}$. 
Then, there exist $\gamma_{0}\geq 0$, $v \in \partial f(\overline{x})$ and $v^{\infty} \in \partial^{\infty} f(\overline{x})$ such that 
    \begin{align}\label{eqn:fj2}
 0 \in \gamma_{0} v + \widehat{\gamma}_{0} v^{\infty} + N(\mathcal{C}, \bar{x}),  
     \end{align}
where $\widehat{\gamma}_{0} = 1$ and $\| v^{\infty}\| = 1$ if $\gamma_{0}=0$; and  $\widehat{\gamma}_{0} = 0$ if $\gamma_{0} > 0$.
  \end{theorem}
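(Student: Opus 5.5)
The plan is to reduce the statement to the standard Fermat rule for the unconstrained minimization of $f+\delta_{\mathcal{C}}$, and then apply the limiting-subdifferential sum rule of Rockafellar--Wets \cite[Corollary 10.9]{rwets}. The case split in the statement will come from whether that sum rule applies.

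Since $\bar{x}$ is a local minimizer of $f$ on $\mathcal{C}$, it is a local minimizer of $\varphi := f+\delta_{\mathcal{C}}$, which is lsc and proper because $f$ is lsc and $\mathcal{C}$ is closed. The generalized Fermat rule \cite[Theorem 10.1]{rwets} gives
\[
0\in\widehat{\partial}\varphi(\bar{x})\subset\partial\varphi(\bar{x}).
\]
Next we invoke the sum rule. It states that if the qualification condition
\[
\partial^{\infty}f(\bar{x})\cap\bigl(-N(\mathcal{C},\bar{x})\bigr)=\{0\}
\]
holds, then $\partial\varphi(\bar{x})\subset\partial f(\bar{x})+N(\mathcal{C},\bar{x})$. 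Here we use $\partial^{\infty}\delta_{\mathcal{C}}=\partial\delta_{\mathcal{C}}=N(\mathcal{C},\cdot)$, which is valid because $\mathcal{C}$ is closed, and its normal cone is the convex-analysis one because $\mathcal{C}$ is convex.

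In the first case the qualification holds. The sum rule then produces $v\in\partial f(\bar{x})$ with $0\in v+N(\mathcal{C},\bar{x})$. We take $\gamma_0=1$ and $\widehat{\gamma}_0=0$, and $v^{\infty}$ can be chosen arbitrarily. Note that $0\in\partial^{\infty}f(\bar{x})$ always, so $v^{\infty}=0$ is admissible.

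In the second case the qualification fails. Then there is a nonzero $u\in\partial^{\infty}f(\bar{x})$ with $-u\in N(\mathcal{C},\bar{x})$. Because $\partial^{\infty}f(\bar{x})$ is a cone, we may normalize to $v^{\infty}:=u/\|u\|$; this keeps $v^{\infty}\in\partial^{\infty}f(\bar{x})$, and $-v^{\infty}\in N(\mathcal{C},\bar{x})$ since the normal cone is also a cone. Hence $0\in v^{\infty}+N(\mathcal{C},\bar{x})$. We take $\gamma_0=0$, $\widehat{\gamma}_0=1$ and $\|v^{\infty}\|=1$, and $v\in\partial f(\bar{x})$ is then arbitrary because its coefficient is zero.

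The main obstacle is this last point: in the second case the statement asserts the existence of some $v\in\partial f(\bar{x})$, but the limiting subdifferential may be empty. I would handle it by reading $v$ as irrelevant when $\gamma_0=0$. If nonemptiness really is required, a fallback is that for lsc $f$ one has $\partial f(\bar{x})\neq\emptyset$ or $\partial^{\infty}f(\bar{x})\neq\{0\}$ \cite[Corollary 8.10]{rwets}. That fact does not by itself give $\partial f(\bar{x})\neq\emptyset$, however, so this remark should be recorded in the proof. The other verifications (convexity of the cones, the identity $\partial^{\infty}\delta_{\mathcal{C}}=N(\mathcal{C},\cdot)$) are routine.
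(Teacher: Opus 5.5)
Your proof is correct, but it takes a different route from the paper. You argue directly at $\bar{x}$, combining the Fermat rule for $f+\delta_{\mathcal{C}}$ with the limiting sum rule \cite[Corollary 10.9]{rwets}. The two alternatives of the statement then correspond exactly to whether the basic qualification $\partial^{\infty}f(\bar{x})\cap(-N(\mathcal{C},\bar{x}))=\{0\}$ holds or fails.

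The paper instead argues by penalization. It makes $\bar{x}$ the unique minimizer of $f+\frac{1}{2}\|\cdot-\bar{x}\|^{2}$ on $\mathcal{C}\cap\mathbb{B}(\bar{x};\frac{1}{2}\delta)$ and replaces the constraint by $k\,\mathrm{dist}(\cdot,\mathcal{C})^{2}$. The penalized minimizers $y^{k}$ converge to $\bar{x}$, and $f(y^{k})\to f(\bar{x})$ follows from the penalized inequality and lower semicontinuity. Applying the regular Fermat rule with the smooth sum rule gives $0\in\widehat{\partial}f(y^{k})+N(\mathcal{C},\text{proj}_{\mathcal{C}}(y^{k}))+(y^{k}-\bar{x})$. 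The paper's two cases come from whether the subgradients $v^{k}\in\widehat{\partial}f(y^{k})$ stay bounded, giving $v\in\partial f(\bar{x})$, or blow up, where normalizing gives $v^{\infty}$ with $\|v^{\infty}\|=1$. Outer semicontinuity of $N(\mathcal{C},\cdot)$ is used to pass to the limit.

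Your route has three advantages. It is shorter. It never uses convexity of $\mathcal{C}$, so any closed $\mathcal{C}$ with the limiting normal cone works. It also isolates the qualification that forces $\gamma_{0}>0$ (automatic when $f$ is locally Lipschitz), which is the same idea as the paper's later $\partial^{\infty}$-NNAMC condition. The price is that all the work is delegated to the limiting sum rule, a considerably deeper result than the elementary tools the paper uses.

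The paper's argument is self-contained. As a by-product it yields a sequential (approximate) stationarity condition at the points $y^{k}$, which carries more information than the limiting statement alone.

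Your caveat about $v$ when $\gamma_{0}=0$ is not a defect of your argument. The paper's case (ii) likewise produces no element of $\partial f(\bar{x})$, so the statement must be read exactly as you propose.
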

   \begin{proof}
Let $\bar{x} \in \mathcal{C}$ be a local solution of minimize $f$ over $\mathcal{C}$. 
To derive \eqref{eqn:fj2}, we will use an argument based on the penalization method, an useful technique used to obtain FJ/KKT conditions and also sequential optimality conditions, see \cite{akktconic,borweinnecessary} and the references therein. Since $\bar{x}$ is a local minimizer, there exists $\delta > 0$ such that $\bar{x}$ is the unique global solution of
   \begin{align*}
\text{minimize } f(x) + \frac{1}{2}\|x-\bar{x}\|^{2} \text{ subject to } x \in \mathcal{C} \cap \mathbb{B} (\bar{x}; \frac{1}{2} \delta). 
   \end{align*}
Given $k\in \mathbb{N}$, consider the penalized optimization problem
    \begin{align}\label{eqn:penalized}
\text{minimize } f(x) + k \, \text{dist}(x, \mathcal{C})^{2}+\frac{1}{2}\|x-\bar{x}\|^{2}  \text{ subject to } x\in \mathbb{B}(\bar{x}; \frac{1}{2}\delta). 
   \end{align}

Let $y^{k}$ be a global solution of \eqref{eqn:penalized} (which exists as a consequence of the Weierstrass theorem). 
We will show that $y^{k}\rightarrow \bar{x}$. 
Let $\hat{x}$ be any limit point of $\{y^{k}\}$. 
By simplicity, assume that $y^{k}\rightarrow \hat{x}$. 
Clearly, $\|\hat{x}-\bar{x}\|\leq \frac{1}{2}\delta$.
Now, from \eqref{eqn:penalized}, one has 
   \begin{align}\label{eqn:penality2}
f(y^{k}) + k \, \text{dist}(y^{k}, \mathcal{C})^{2}+\frac{1}{2}\|y^{k}-\bar{x}\|^{2} 
       \leq 
f(\bar{x}) + k \, \text{dist}(\bar{x}, \mathcal{C})^{2}+\frac{1}{2}\|\bar{x}-\bar{x}\|^{2} 
       =f(\bar{x}) 
   \end{align}
The above implies $\, \text{dist}(y^{k}, \mathcal{C})^{2}\leq (f(\bar{x})-f(y^{k}))/k$.
By taking the limit we get $\text{dist}(\hat{x}, \mathcal{C})^{2}\leq 0$ and so $\hat{x}\in \mathcal{C}$. 
By \eqref{eqn:penality2}, one has $f(y^{k}) +\frac{1}{2}\|y^{k}-\bar{x}\|^{2} \leq f(\bar{x})$ and so $f(\hat{x}) +\frac{1}{2}\|\hat{x}-\bar{x}\|^{2} \leq f(\bar{x})+\frac{1}{2}\|\bar{x}-\bar{x}\|^{2}$ with 
$\hat{x}\in \mathbb{B}(\bar{x}, \frac{1}{2}\delta)$.
As $\bar{x}$ is the unique global minimizer, we must have $\bar{x}=\hat{x}$. The latter implies that $\{y^{k}\}$ has a  unique limit point and thus $y^{k}\rightarrow \bar{x}$. 

As $y^{k} \rightarrow \bar{x}$, one has $\|y^{k} - \bar{x}\| < \delta/2$ for $k$ large enough. 
Now, since the function $x \mapsto \text{dist} (x, \mathcal{C})^{2}$ is continuous differentiable, by using \cite[Exercise 10.10]{rwets}, the Fermat rule and since $\nabla [\text{dist} (\cdot, \mathcal{C})^{2}](y^{k})\in \text{dist} (y^{k}, \mathcal{C})\partial \text{dist} (\cdot, \mathcal{C})(y^{k})\subset N(\mathcal{C}, \text{proj}_{\mathcal{C}}(y^{k}))$ (due to the convexity of $\mathcal{C}$), we get 
   \begin{align}\label{eqn:fermat}
0 \in \widehat{\partial} \left( f(\cdot) + k \,  \text{dist} (\cdot, \mathcal{C})^{2} + \frac{1}{2} \|\cdot - \bar{x}\|^{2} \right) (y^{k}) 
& = \widehat{\partial} f(y^{k})+\nabla ( k \,  \text{dist} (\cdot, \mathcal{C})^{2} + \frac{1}{2} \|\cdot - \bar{x}\|^{2})(y^{k}) \nonumber \\
& \subset \widehat{\partial} f(y^{k})+\nabla ( k \,  \text{dist} (\cdot, \mathcal{C})^{2})(y^{k})+(y^{k} - \bar{x}) \nonumber \\
& \subset \widehat{\partial} f(y^{k}) + N(\mathcal{C}, \text{proj}_{\mathcal{C}}(y^{k})) + (y^{k} - \bar{x}).
   \end{align}

Clearly, $\text{proj}_{\mathcal{C}}(y^{k})\rightarrow \bar{x}$. 
Now, we set $\varepsilon_{k} := (y^{k} - \bar{x})$ for all $k \in \mathbb{N}$. Then, $\varepsilon_{k} \rightarrow 0$. 
From \eqref{eqn:fermat}, there exists a sequence $\{v^{k}\} \subset \mathbb{R}^{n}$ with $v^{k} \in \widehat{\partial} f(y^{k})$ such that $0 \in v^{k} + N(\mathcal{C}, \text{proj}_{\mathcal{C}}(y^{k})) + \varepsilon_{k}$. 

Now, we consider two cases depending on if $\{v^{k}\}$ is bounded or not. 

{\bf (i)}. Suppose that $\{v^{k}\}$ is bounded. In this case, after taking an adequate subsequence we suppose that $v^{k} \rightarrow v$ for some $v \in \mathbb{R}^{n}$. Clearly, $v \in \partial f(\bar{x})$. 
Hence, by taking the limit in $0\in v^{k}+N(\mathcal{C}, \text{proj}_{\mathcal{C}}(y^{k}))+ \varepsilon_{k}$, and by using the outercontinuity of the normal cone, we get $0 \in v+ N(\mathcal{C}, \bar{x})$.

{\bf (ii)}. Suppose that $\{v^{k}\}$ is not bounded. Then, after taking an adequate subsequence, we may suppose that $\|v^{k}\| \rightarrow \infty$ and $v^{k}/\|v^{k}\| \rightarrow v^{\infty}$ for some $v^{\infty} \in \partial^{\infty} f(\bar{x})$ with $\|v^{\infty}\|=1$. 
Taking the limit in $0\in (v^{k}/\|v^{k}\|) + N(\mathcal{C}, \text{proj}_{\mathcal{C}}(y^{k})) + (\varepsilon_{k}/\|v^{k}\|)$, and from the outercontinuity of the normal cone, we get $0 \in v^{\infty}+N(\mathcal{C}, \bar{x})$. 

Therefore, in both cases, we get $0 \in \gamma_{0}v + \widehat{\gamma}_{0} v^{\infty} + N(\mathcal{C}, \bar{x})$,  where $\widehat{\gamma}_{0} = 1$ and $\|v^{\infty}\| = 1$ (if $\gamma_{0} = 0$) and $\widehat{\gamma}_{0} = 0$ (if $\gamma_{0} > 0$).
\end{proof}

 \subsection{Applications to mathematical programming}
 \label{subsec:applicationmp}
 
In this section, we focus on deriving optimality conditions for mathematical programming problem of the form 
\begin{equation}\label{main:problem}
 \text{minimize} \ f(x) \ \text{subject to } g_{j} (x) \leq 0, \ j \in I, \tag{P}
\end{equation}
where $I$ is a finite index set. Here, we assume that the feasible set 
\begin{equation}\label{set:omega1}
 \Omega := \{x \in \mathbb{R}^{n} \mid \, g_{j} (x) \leq 0; ~ \forall ~ j \in I\},
\end{equation}
is a closed convex set and, as usual, we do not impose a local Lipschitz or continuity assumption on the constraint functions $g_{j}$ for all $j$.

 Let $\bar{x} \in \Omega$ be a feasible point, then the set of active indexes is given by $ I(\overline{x}) := \{j \in I: ~ g_{j} (\overline{x}) =0 \}$. Hence, we continue with the following notion of Fritz-John (FJ) and Karush-Kuhn-Tucker (KKT) conditions at a point $\bar{x}\in \Omega$.

\begin{definition}\label{def:fj}
 Let $\{K_{j}\}_{j \in I} \subset \mathbb{R}^{n}$ be a family of subsets, $\beta_{j}>0$ and $\gamma_{j} \geq 0$ for all $j \in I$. We say that a feasible point $\bar{x}$ satisfies the generalized Fritz-John (FJ) optimality condition if we can find scalars $\gamma_{0} \geq 0$ and $\mu_{j} \geq 0$, $j \in I(\bar{x})$ satisfying $\gamma_{0} + \sum_{j \in I(\bar{x})} \mu_{j}=1$ such that 
   \begin{align}\tag{G-FJ}\label{eqn:fj}
 0 \in \gamma_{0} \partial f(\overline{x}) + \widehat{\gamma}_{0} \partial^{\infty} f(\overline{x}) + \sum_{j \in I(\bar{x}): \mu_{j}>0} \mu_{j} \partial^{K_{j}}_{\beta_{j}, \gamma_{j}} g_{j} (\overline{x}) + \sum_{j \in I(\bar{x}): \mu_{j}=0} \partial^{\infty} g_{j} (\bar{x}), 
    \end{align}
where $\widehat{\gamma}_{0} = 1$ if $\gamma_{0} = 0$ and $\widehat{\gamma}_{0} = 0$ if $\gamma_{0} > 0$.

If $\gamma_{0}>0$, then we say that the generalized Karush-Kuhn-Tucker (KKT) conditions hold at $\bar{x}$. The scalars $\{\mu_{j}/\gamma_{0}\}_{j \in I(\bar{x})}$ are called {\it multipliers}.
\end{definition}

We point out that in our definition of FJ/KKT conditions, we use $\partial^{\infty} g(\bar{x})$ instead of $N_{g}(\bar{x})$ (note that if $g$ is strong quasiconvex with $\gamma>0$ on $\mathbb{R}^{n}$ and $S_{g}(\cdot)$ is inner semicontinuous at $\bar{x}$, then $\partial^{\infty} g(\bar{x}) \subset N_{g}(\bar{x})$ by Proposition \ref{propo:limitinghorizon}). 
Thus, if we use $N_{g_{j}}(\bar{x})$ instead of $\partial^{\infty} g_{j}(\bar{x})$ in Definition \ref{def:fj}, then we get a condition with more chances to be fulfilled at local minimizers, but, on the other hand, it can be too weak that non-minimizers may fulfill that condition. 
For instance, consider the next example. 


\begin{example}\label{ex:fj}
 Let $f, g: \mathbb{R} \rightarrow \mathbb{R} \cup \{+\infty\}$. We consider the problem of minimizing $f(x):=-x^{2}$ subject to $g(x) \leq 0$, where $g$ is defined by
\begin{align*}
 g(x) = \left\{
 \begin{array}{cl}
  0 & {\rm if} ~ x=0, \\
  - x^{-1} & {\rm if} ~ 0<x \leq 1, \\
  + \infty & {\rm otherwise}.
 \end{array}
 \right.     
\end{align*}
Here, $\Omega = [0, 1]$. Clearly, $\bar{x} = 0$ is not a local minimizer, $\nabla f(\bar{x}) = \partial f(\bar{x}) = 0$ and $\partial^{\infty} f(\bar{x}) = 0$. Note that $g$ is strongly quasiconvex with modulus $\gamma =1$ on $\mathbb{R}$ and $S_{g} (\cdot)$ is inner semicontinuous at $\bar{x}$. 
Furthermore, $N_{g}(0) = (S_{g} (\bar{x}) - \bar{x})^{\circ} = \mathbb{R}_{-}$, $\partial_{1, 1}^{\mathbb{R}} g(0) = (- \infty, - 1/2]$, $\widehat{\partial} g(0) = \emptyset$, $\partial g(0) = \emptyset$, $\partial^{\infty} g(0) = \emptyset$ and $\partial^{q}g(0)=\emptyset$.
Hence, $0 \in (\partial f(\bar{x}) + N_{g}(0)) = \mathbb{R}_{-}$ holds.
On the other hand,  $0 \in \partial f(\bar{x}) + \mu\partial_{1, 1}^{\mathbb{R}} g(0) = \mu (- \infty, -1/2]$ ($\mu > 0$) or $0 \in \partial f(\bar{x}) + \partial^{\infty} g(0)$ cannot hold at the non-minimizer $\bar{x} = 0$, thus, we can use \eqref{eqn:fj} to eliminate some non-minimizers.
\end{example}

We point out that there are many FJ/KKT conditions in the literature. A very general necessary condition (able to deal with non-Lipschitz functions) is the condition given by \cite{borweinnecessary}. The next example shows an instance when our definition can be applied while the result of \cite[Corollary 2.6]{borweinnecessary} can not. 

\begin{example}\label{ex:fj2}
 Let $f, g: \mathbb{R} \rightarrow \mathbb{R} \cup \{+\infty\}$. We consider the problem of minimizing $f(x) := x$ subject to $g(x) \leq 0$, where $g: \mathbb{R} \rightarrow \mathbb{R} \cup \{+\infty\}$ is defined in Example \ref{ex:fj}. 
 Clearly, $\bar{x} = 0$ is a global minimizer. We note that $\partial_{1, 1}^{\mathbb{R}} g(0) = (-\infty, -1/2]$ and $0 \in \partial f(\bar{x}) +\partial_{1, 1}^{\mathbb{R}} g(0)=(-\infty, 1/2]$ hold. 
 
 On the other hand, we note that \cite[Corollary 2.6]{borweinnecessary} says at a local minimizer one has that $0 \in \partial f(\bar{x}) + \mu \partial g(0)$ ($\mu>0$) or $0 \in \partial f(\bar{x}) + \partial^{\infty} g(0)$. Since $\partial^{\infty} g(0)$ and  $\partial g(0)$ are empty sets, we cannot apply the results of \cite[Corollary 2.6]{borweinnecessary}.
\end{example}

Before continuing, we consider the following {\it constraint qualification }(CQ), 
\begin{align}\label{eqn:CQ}\tag{GCQ}
 N(\Omega, \bar{x}) = \bigcup_{\mu \in \mathbb{R}_{+}^{ |I(\bar{x})|}} \left( \sum_{j \in I(\bar{x}): \, \mu_{j} >0} \mu_{j} \partial_{\beta_{j}, \gamma_{j}}^{K_{j}} g_{j}(\bar{x}) + \sum_{j \in I(\bar{x}): \, \mu_{j} = 0} \partial^{\infty} g_{j}(\bar{x}) \right). 
\end{align} 
We point out that \eqref{eqn:CQ} is valid under the hypotheses of Theorem \ref{theo:normalcone} and Remark \ref{remark:normalcone}(a).
The following theorem affirms that the generalized FJ conditions are a necessary optimality condition, under some assumptions. The proof follows directly from Theorem \ref{theo:fjnecessary} and \eqref{eqn:CQ}, thus it is omitted. 

\begin{theorem}\label{theo:fjnecessary1}
 Let $\bar{x} \in \Omega$ be a local solution of \eqref{main:problem} with $f$ lsc function and $\Omega$ being a closed convex set. If \eqref{eqn:CQ} holds at $\bar{x}$, then the FJ conditions hold at $\bar{x}$.
\end{theorem}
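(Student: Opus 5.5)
The plan is to apply Theorem \ref{theo:fjnecessary} with $\mathcal{C}:=\Omega$, which is allowed because $\Omega$ is closed and convex and $f$ is lsc. Since $\bar{x}$ is a local solution of \eqref{main:problem}, this gives $\gamma_{0}\geq 0$, $v\in\partial f(\bar{x})$, $v^{\infty}\in\partial^{\infty}f(\bar{x})$ and some $\eta\in N(\Omega,\bar{x})$ such that
\begin{align*}
0=\gamma_{0}v+\widehat{\gamma}_{0}v^{\infty}+\eta .
\end{align*}
Here $\widehat{\gamma}_{0}=1$ and $\|v^{\infty}\|=1$ when $\gamma_{0}=0$, and $\widehat{\gamma}_{0}=0$ when $\gamma_{0}>0$. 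Next I would use \eqref{eqn:CQ} to represent $\eta$: there is $\mu\in\mathbb{R}_{+}^{|I(\bar{x})|}$ with
\begin{align*}
\eta\in \sum_{j\in I(\bar{x}):\mu_{j}>0}\mu_{j}\partial_{\beta_{j},\gamma_{j}}^{K_{j}}g_{j}(\bar{x})+\sum_{j\in I(\bar{x}):\mu_{j}=0}\partial^{\infty}g_{j}(\bar{x}).
\end{align*}
Substituting this into the previous relation already gives an inclusion of the form \eqref{eqn:fj}, except that $(\gamma_{0},\mu)$ need not satisfy $\gamma_{0}+\sum_{j}\mu_{j}=1$.

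The remaining step, and the only delicate one, is the normalization. Set $s:=\gamma_{0}+\sum_{j\in I(\bar{x})}\mu_{j}$.

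\textbf{Case $\gamma_{0}>0$.} Divide the whole inclusion by $s>0$, taking $\tilde\gamma_{0}=\gamma_{0}/s$ and $\tilde\mu_{j}=\mu_{j}/s$.
\begin{itemize}
\item The term $\gamma_{0}v$ becomes $\tilde\gamma_{0}v\in\tilde\gamma_{0}\partial f(\bar{x})$.
\item For indices with $\mu_{j}>0$, $\mu_{j}\partial_{\beta_{j},\gamma_{j}}^{K_{j}}g_{j}(\bar{x})$ becomes $\tilde\mu_{j}\partial_{\beta_{j},\gamma_{j}}^{K_{j}}g_{j}(\bar{x})$.
\item For indices with $\mu_{j}=0$, the elements of $\partial^{\infty}g_{j}(\bar{x})$ get multiplied by $1/s$. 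This is harmless because $\partial^{\infty}g_{j}(\bar{x})$ is a cone, so it is invariant under positive scaling.
\end{itemize}
Since $\tilde\gamma_{0}>0$, we have $\widehat{\gamma}_{0}=0$, and \eqref{eqn:fj} holds, in fact in KKT form.

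\textbf{Case $\gamma_{0}=0$.} We have $0=v^{\infty}+\eta$ with $\|v^{\infty}\|=1$, and $\widehat{\gamma}_{0}=1$ must be kept.
\begin{itemize}
\item If $s=\sum_{j}\mu_{j}>0$, divide by $s$ and write $v^{\infty}/s\in\partial^{\infty}f(\bar{x})$, which again uses the cone property. The horizon terms of the $g_{j}$ are also absorbed by the cone property. This gives \eqref{eqn:fj} with $\gamma_{0}=0$ and $\sum_{j}\tilde\mu_{j}=1$.
\item If all $\mu_{j}=0$, the normalization $\gamma_{0}+\sum_{j}\mu_{j}=1$ cannot be reached by scaling. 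I expect this to be the main obstacle. What I would try first is to select one active index $j_{0}$ and set $\mu_{j_{0}}=1$. That choice requires a point of $\partial_{\beta_{j_{0}},\gamma_{j_{0}}}^{K_{j_{0}}}g_{j_{0}}(\bar{x})$ that is compatible with the inclusion, for instance $0$, which is available when $\gamma_{j_{0}}=0$. Otherwise I would read the normalization in Definition \ref{def:fj} loosely, as non-triviality of the multiplier vector, which here is guaranteed by $\|v^{\infty}\|=1$ and $\widehat{\gamma}_{0}=1$. I would also check whether $I(\bar{x})=\emptyset$ forces $N(\Omega,\bar{x})=\{0\}$ by \eqref{eqn:CQ}. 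In that situation $v^{\infty}=0$, which contradicts $\|v^{\infty}\|=1$, so $\gamma_{0}>0$ and we are back in the first case.
\end{itemize}

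Apart from this degenerate subcase, the argument is a direct combination of Theorem \ref{theo:fjnecessary}, \eqref{eqn:CQ}, and the conic structure of the horizon subdifferentials. This is consistent with the paper's remark that the proof follows directly.
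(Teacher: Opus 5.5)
\textbf{Verdict: genuine gap.} Your reduction is the one the paper intends: its proof is omitted as following ``directly'' from Theorem~\ref{theo:fjnecessary} and \eqref{eqn:CQ}. You handle the cases $\gamma_0>0$, $\gamma_0=0$ with $\sum_j\mu_j>0$, and $I(\bar x)=\emptyset$ correctly. The gap is the sub-case you flag yourself, $\gamma_0=0$ with all $\mu_j=0$. It cannot be closed, because the statement is false there. Take $n=1$, $I=\{1\}$,
\[
f(x)=\begin{cases}\sqrt{x}, & x\ge 0,\\ -\sqrt{-x}, & x<0,\end{cases}
\qquad
g_1(x)=\begin{cases}-x, & x\ge 0,\\ \sqrt{-x}, & x<0.\end{cases}
\]
Both functions are continuous and $\Omega=[0,\infty)$. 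The point $\bar x=0$ is a global minimizer with $I(\bar x)=\{1\}$ and $N(\Omega,0)=(-\infty,0]$.
\begin{itemize}
\item Since $\widehat\partial f(0)=\emptyset$ and $\widehat\partial f(x)=\{1/(2\sqrt{|x|})\}$ for $x\neq 0$, you get $\partial f(0)=\emptyset$ and $\partial^\infty f(0)=[0,\infty)$.
\item Likewise $g_1'(x)=-1/(2\sqrt{-x})\to-\infty$ as $x\uparrow 0$, which gives $\partial^\infty g_1(0)=(-\infty,0]$.
\item For any $\beta_1>0$, $\gamma_1>0$ and $K_1\supseteq\Omega$, one has $\partial^{K_1}_{\beta_1,\gamma_1}g_1(0)=\emptyset$. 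For $y>0$ the left side of \eqref{def:subd} is $0$. For a fixed $\lambda\in(0,1]$ with $\gamma_1-\lambda/\beta_1-\lambda\gamma_1>0$, the right side is a quadratic in $y$ with positive leading coefficient.
\end{itemize}
Hence the right-hand side of \eqref{eqn:CQ} is $\emptyset\cup(-\infty,0]=N(\Omega,0)$, so \eqref{eqn:CQ} holds. Yet \eqref{eqn:fj} fails. For $\gamma_0>0$ the term $\gamma_0\partial f(0)$ is empty. For $\gamma_0=0$ the normalization forces $\mu_1=1$, while $\partial^{K_1}_{\beta_1,\gamma_1}g_1(0)=\emptyset$. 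The proof of Theorem~\ref{theo:fjnecessary} lands exactly in your bad case: $v^\infty=1$, and $-1\in N(\Omega,0)$ can be represented only through $\partial^\infty g_1(0)$ with $\mu_1=0$.

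Consequently, your second remedy (reading the normalization as mere nontriviality) changes Definition~\ref{def:fj}; it is not a proof of the statement. Your first remedy is valid, but only under an extra hypothesis, and it should be stated precisely. Suppose $0\in\partial^{K_{j_0}}_{\beta_{j_0},\gamma_{j_0}}g_{j_0}(\bar x)$ for some active $j_0$, which is automatic when $\gamma_{j_0}=0$. Then take $\gamma_0=0$, $\mu_{j_0}=1$, and choose the element $0$ in every slot. This is legitimate because in the bad case $\partial^\infty f(\bar x)$ and every $\partial^\infty g_j(\bar x)$ are nonempty closed cones (they contain $v^\infty$ and $v^\infty_j$), hence contain $0$. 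You cannot instead simply delete $v^\infty_{j_0}$ from $0=v^\infty+\sum_j v^\infty_j$. Outside that situation, the theorem needs an additional assumption ruling out $0\in w+\sum_{j\in I(\bar x)}\partial^\infty g_j(\bar x)$ with $0\neq w\in\partial^\infty f(\bar x)$. One option is $f$ locally Lipschitz at $\bar x$: then $\partial^\infty f(\bar x)=\{0\}$ and only the case $\gamma_0>0$ occurs. Another is \eqref{eqn:cq1}, which is why Theorem~\ref{theo:kkt} is not affected.
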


Now, let us focus on the conditions that ensure the validity of \eqref{eqn:CQ}. 

\begin{theorem}\label{theo:closure}
    Let $\bar{x} \in \Omega$ be a feasible point of \eqref{main:problem} such that 
     \begin{align}\label{eqn:normalconemaxequality}
N(\Omega, \bar{x})=
\overline{ 
             \bigcup_{\mu \in \mathbb{R}_{+}^{|I(\bar{x})|}} 
                 \left\{
                   \sum_{j\in I(\bar{x}): \mu_{j}>0} \mu_{j} \partial_{\beta_{j}, \gamma_{j}}^{K_{j}} g_{j}(\bar{x})
                    +
                    \sum_{j\in I(\bar{x}): \mu_{j}=0} \partial^{\infty}g_{j}(\bar{x}) 
                 \right\}
   }.   
          \end{align}  
    Assume that the following condition is satisfied:
      \begin{enumerate}
         \item [(a)] $N(\Omega, \bar{x})$ is pointed, and for every $j\in I(\bar{x})$, we have $0 \notin \partial_{\beta_{j}, \gamma_{j}}^{K_{j}} g_{j}(\bar{x})$, and $[\partial_{\beta_{j}, \gamma_{j}}^{K_{j}} g_{j}(\bar{x})]^{\infty}\cap (S_{g_{j}}(\bar{x})\cap K_{j}-\bar{x})^{\circ}=\{0\}$.
      \end{enumerate}
     Then \eqref{eqn:CQ} holds at $\bar{x}$.
\end{theorem}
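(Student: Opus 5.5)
The plan is to show that, under (a), the set inside the closure in \eqref{eqn:normalconemaxequality} is already closed. Then \eqref{eqn:normalconemaxequality} immediately yields \eqref{eqn:CQ}.

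\textbf{Setup.} Denote that set by $\mathcal{M}$. Take $z^{k}\in\mathcal{M}$ with $z^{k}\to z$, and write
\begin{align*}
z^{k}=\sum_{j\in I(\bar{x}):\,\mu^{k}_{j}>0}\mu_{j}^{k}w_{j}^{k}+\sum_{j\in I(\bar{x}):\,\mu^{k}_{j}=0}u_{j}^{k},
\end{align*}
where $w_{j}^{k}\in\partial_{\beta_{j},\gamma_{j}}^{K_{j}}g_{j}(\bar{x})$ and $u_{j}^{k}\in\partial^{\infty}g_{j}(\bar{x})$. Since $I(\bar{x})$ is finite, I pass to a subsequence along which the pattern $J:=\{j:\mu_{j}^{k}>0\}$ does not depend on $k$. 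Set $a_{j}^{k}:=\mu_{j}^{k}w_{j}^{k}$ for $j\in J$ and $a_{j}^{k}:=u_{j}^{k}$ for $j\notin J$. Each $a^{k}_{j}$ is itself an element of $\mathcal{M}$ (take all other multipliers zero and the other horizon terms equal to $0$). Hence each $a^{k}_{j}$ lies in $N(\Omega,\bar{x})$ by \eqref{eqn:normalconemaxequality}.

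\textbf{Step 1: all terms are bounded.} Suppose $M_{k}:=\max_{j}\|a_{j}^{k}\|\to\infty$. Divide by $M_{k}$ and extract a subsequence with $a_{j}^{k}/M_{k}\to b_{j}\in N(\Omega,\bar{x})$; this limit lies in the cone because it is closed. Then $\sum_{j}b_{j}=\lim z^{k}/M_{k}=0$, and at least one $b_{j}$ has norm $1$. In a pointed closed convex cone, a sum of elements equal to $0$ forces every summand to be $0$. This contradicts pointedness of $N(\Omega,\bar{x})$. So all $a_{j}^{k}$ are bounded, and after a subsequence $a_{j}^{k}\to a_{j}$ with $z=\sum_{j}a_{j}$. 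For $j\notin J$, closedness of the horizon subdifferential gives $a_{j}\in\partial^{\infty}g_{j}(\bar{x})$.

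\textbf{Step 2: identify the limits for $j\in J$.} Extract a subsequence so that $\mu_{j}^{k}\to\mu_{j}\in[0,\infty]$, and split into three cases.

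\emph{Case $0<\mu_{j}<\infty$.} Then $w_{j}^{k}=a_{j}^{k}/\mu_{j}^{k}$ converges. Its limit lies in $\partial_{\beta_{j},\gamma_{j}}^{K_{j}}g_{j}(\bar{x})$ by closedness (P1), so $a_{j}\in\mu_{j}\partial_{\beta_{j},\gamma_{j}}^{K_{j}}g_{j}(\bar{x})$.

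\emph{Case $\mu_{j}=\infty$.} Then $w_{j}^{k}=a_{j}^{k}/\mu_{j}^{k}\to0$. By (P1), $0\in\partial_{\beta_{j},\gamma_{j}}^{K_{j}}g_{j}(\bar{x})$, which is excluded by (a).

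\emph{Case $\mu_{j}=0$ and $a_{j}\neq0$.} Here $t_{k}:=\mu_{j}^{k}\to0^{+}$ and $t_{k}w_{j}^{k}\to a_{j}$, so $a_{j}\in[\partial_{\beta_{j},\gamma_{j}}^{K_{j}}g_{j}(\bar{x})]^{\infty}$. Next, take $y\in S_{g_{j}}(\bar{x})\cap K_{j}$ in \eqref{def:subd}; then $\max\{g_{j}(y),g_{j}(\bar{x})\}=g_{j}(\bar{x})$. Dividing by $\lambda$ and letting $\lambda\to0^{+}$ gives $\langle w_{j}^{k},y-\bar{x}\rangle\le-\tfrac{\beta_{j}\gamma_{j}}{2}\|y-\bar{x}\|^{2}\le0$. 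Multiplying by $t_{k}$ and passing to the limit yields $a_{j}\in(S_{g_{j}}(\bar{x})\cap K_{j}-\bar{x})^{\circ}$. Hypothesis (a) then forces $a_{j}=0$, a contradiction.

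\emph{Case $\mu_{j}=0$ and $a_{j}=0$.} Move $j$ to the group with zero multiplier and use the element $0\in\partial^{\infty}g_{j}(\bar{x})$ (horizon subdifferentials at points of the domain contain the origin).

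\textbf{Conclusion and main obstacle.} After this reclassification, $z$ has exactly the form of an element of $\mathcal{M}$, so $\mathcal{M}$ is closed and \eqref{eqn:CQ} follows. The delicate step is the case $\mu^{k}_{j}\to0$ with $w^{k}_{j}$ unbounded. There one has to check carefully that the recession-type limit lands both in $[\partial^{K_j}_{\beta_j,\gamma_j}g_j(\bar{x})]^{\infty}$ and in the polar of the localized sublevel set, so that (a) excludes it. A secondary point needing care is the convention that $0\in\partial^{\infty}g_{j}(\bar{x})$, used to absorb vanishing terms.
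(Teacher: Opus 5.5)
Your argument follows the paper's proof essentially step by step. You fix the sign pattern of the multipliers, bound the individual summands via pointedness of $N(\Omega,\bar{x})$, exclude $\mu_j^k\to\infty$ via $0\notin\partial^{K_j}_{\beta_j,\gamma_j}g_j(\bar{x})$, handle $\mu_j^k\to 0$ via the recession/polar condition in (a), and pass to the limit. Working with $a_j=\lim_k \mu_j^k w_j^k$ directly, instead of normalizing $w_j^k/\|w_j^k\|$ as the paper does, is slightly cleaner. Your derivation of $\langle w_j^k,y-\bar{x}\rangle\le-\tfrac{\beta_j\gamma_j}{2}\|y-\bar{x}\|^2$ from \eqref{def:subd} is correct.

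The gap is at the point you flagged yourself: your claim that $0\in\partial^{\infty}g_j(\bar{x})$ always holds is \emph{not} valid here. The $g_j$ are not assumed lsc, and for non-lsc functions the horizon subdifferential can be empty. The paper itself records $\partial^{\infty}g(0)=\emptyset$ in Examples~\ref{ex:fj}, \ref{ex:fj2} and~\ref{ex:sufficiency}.

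In Step~1 this use can be repaired. Complete $a_j^k$ by $\varepsilon w_i$, with $w_i\in\partial^{K_i}_{\beta_i,\gamma_i}g_i(\bar{x})$, for the indices $i$ with empty horizon subdifferential (these must lie in $J$). Complete it by $0$ for the other indices, then let $\varepsilon\to 0$ using closedness of $N(\Omega,\bar{x})$.

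In the final reclassification, however (indices with $\mu_j^k\to 0$ and $a_j=0$), the fact is indispensable, and without it the conclusion is false. Take a single constraint on $\mathbb{R}$: $g(x)=1+x$ for $x>0$, $g(0)=0$, $g(x)=-1$ for $x<0$, with $\bar{x}=0$, $K=[-1,1]$, $\beta=\gamma=1$.
\begin{itemize}
\item $\Omega=(-\infty,0]$, and $N(\Omega,0)=[0,\infty)$ is pointed.
\item A direct computation gives $\partial^{K}_{1,1}g(0)=[1/2,5/2]$, which is compact and avoids $0$, so (a) holds.
\item Any $x^k\to 0$ with $g(x^k)\to 0$ must satisfy $x^k=0$ eventually, and $\widehat{\partial}g(0)=\emptyset$. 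Hence $\partial^{\infty}g(0)=\emptyset$.
\end{itemize}
The set inside the closure is therefore $(0,\infty)$. Its closure equals $N(\Omega,0)$, so \eqref{eqn:normalconemaxequality} holds, but the set misses $0$, so \eqref{eqn:CQ} fails.

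The paper's proof takes the same final limit and uses $0\in\partial^{\infty}g_j(\bar{x})$ for $j\in I^{+0}$ just as tacitly. So this is a defect of the statement rather than of your route. Your proof (and the paper's) becomes complete under the extra hypothesis $\partial^{\infty}g_j(\bar{x})\neq\emptyset$ for all $j\in I(\bar{x})$, since a nonempty horizon subdifferential is a closed cone and hence contains $0$. This holds, for instance, when each $g_j$ is lsc near $\bar{x}$. Then there are $x^k\to\bar{x}$ with $g_j(x^k)\to g_j(\bar{x})$ and $\widehat{\partial}g_j(x^k)\neq\emptyset$ (see \cite{rwets}), and scaling such subgradients by $t_k\to 0^+$ fast enough produces $0$.
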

  \begin{proof}
Take $w \in N(\bar{x}, \Omega)$. 
Thus, there exists a sequence $\{w^{k}\} \subset \mathbb{R}^{n}$ with $w^{k} \rightarrow w$ such that $w^{k} = \sum_{j \in I(\bar{x}):  \mu_{j}^{k}>0} \mu_{j}^{k}v^{k}_{j} + \sum_{j\in I(\bar{x}): \mu^{k}=0} v^{k,\infty}_{j}$, $v_{j}^{k} \in \partial_{\beta_{j}, \gamma_{j}}^{K_{j}} g_{j} (\bar{x})$ for all $j \in I(\bar{x})$ with $\mu_{j}^{k} > 0$ and $v_{j}^{k, \infty} \in \partial^{\infty} g_{j}(\bar{x})$ for all $j \in I(\bar{x})$ with $\mu_{j}^{k} =0$.

It remains to prove \eqref{eqn:CQ} under (a). 
Indeed, after taking an adequate subsequence, we assume $I^{+}:=\{j \in I(\bar{x}) \mid \, \mu_{j}^{k}>0\}$ and $I^{0} := \{j \in I(\bar{x}) \mid \, \mu_{j}^{k}=0\}$ for all $k$.
First, we will show that $\{\mu_{j}^{k} v_{j}^{k}\}_{j \in I^{+}}$ and $\{v_{j}^{k, \infty}\}_{j \in I^{0}}$ are bounded. 
By contradiction, we assume 
$$M_{k}:=\max\{\|\mu_{j}^{k}v_{j}^{k}\|, \|v_{i}^{k, \infty}\| \mid \, j \in I^{+}, i \in I^{0}\} \rightarrow \infty.$$ 

Then, $w^{k}/M_{k} = \sum_{j\in I^{+}} (\mu_{j}^{k} v^{k}_{j}/M_{k}) + \sum_{j\in I^{0}} (v^{k, \infty}_{j}/M_{k})$. 
We assume that $\mu_{j}^{k} v^{k}_{j}/M_{k} \rightarrow w_{j} \in N(\Omega, \bar{x})$ for all $j \in I^{+}$ (because \eqref{eqn:normalconemaxequality} holds) and $v^{k, \infty}_{j}/M_{k}\rightarrow w^{\infty}_{j} \in N(\Omega, \bar{x})$ for all $j \in I^{0}$. As $I^{+}\cup I^{0}$ is finite, the maximum of $M_{k}$ must be attained in $j_{0}\in I^{+}\cup I^{0}$, for infinite many $k$. 
So, after taking an adequate limit and since $0=\sum_{j\in I^{+}} w_{j}+\sum_{j\in I^{0}} w_{j}^{\infty}$, we get $-w_{j_{0}} \in N(\Omega, \bar{x})$ with $\|w_{j_{0}}\|=1$, or $-w^{\infty}_{j_{0}}\in N(\Omega, \bar{x})$ with $\| w^{\infty}_{j_{0}} \|=1$, which contradicts that $ N(\Omega, \bar{x})$ is pointed. As consequence, $\{\mu_{j}^{k} v_{j}^{k}\}_{j \in I^{+}}$ and $\{v_{j}^{k, \infty}\}_{j \in I^{0}}$ are bounded.

Now, we will show that $\{\mu_{j}^{k}\}_{j \in I^{+}}$ is bounded. Indeed, as $\{\mu_{j}^{k}v_{j}^{k}\}_{j \in I^{+}}$ is bounded, we get $\|v_{j}^{k}\|\leq \frac{1}{\mu_{j}^{k}} M$ for some $M>0$. Thus, if $\mu_{j}^{k} \rightarrow \infty$ for some $j \in I^{+}$ we get $\|v_{j}^{k}\| \leq \frac{1}{\mu_{j}^{k}} M\rightarrow 0$ and $0 \in \partial_{\beta_{j}, \gamma_{j}}^{K_{j}} g_{j}(\bar{x})$, which is a contradiction. Therefore, $\{\mu_{j}^{k}\}_{j \in I^{+}}$ is bounded. Then, after taking an adequate subsequence, we assume $\mu_{j}^{k}\rightarrow \hat{\mu}_{j}$ for all $j \in I^{+}$.

Set $I^{++} := \{j \in I^{+} \mid \, \hat{\mu}_{j} > 0\}$ and $I^{+0} := \{j \in I^{+} \mid \, \hat{\mu}_{j} = 0\}$. 
Clearly, as $\{\mu_{j}^{k}v_{j}^{k}\}_{j \in I^{+}}$ is bounded, we get $\{v_{j}^{k}\}_{j \in I^{++}}$ is bounded, too. 

Let us prove that $\{v_{j}^{k}\}_{j \in I^{+0}}$ is bounded. Indeed, suppose that $\|v_{j}^{k}\| \rightarrow \infty$ for some $j \in I^{+0}$. Since $v_{j}^{k} \in \partial_{\beta_{j}, \gamma_{j}}^{K_{j}} g_{j}(\bar{x})$, we have 
\begin{align*} 
 \frac{\lambda}{\beta} \left \langle \frac{v_{j}^{k}}{\|v_{j}^{k}\|}, y - \bar{x} \right \rangle \leq - \frac{\lambda(1-\lambda)}{2} \frac{\gamma_{j}\|y-\bar{x}\|^{2}}{\|v_{j}^{k}\|} + \frac{\lambda^{2}}{2 \beta} \frac{\mu_{j}^{k} \|\bar{y} - \bar{x} \|^{2}}{\|v_{j}^{k}\|}, \ \forall ~ y \in S_{g_{j}}(\bar{x}) \cap K_{j}. 
\end{align*}

Suppose that $(v_{j}^{k}/\|v_{j}^{k}\|) \rightarrow w_{j}$ for some $w_{j} \in \mathbb{R}^{n}$, $\|w_{j}\| = 1$. Taking the limit in the above expression, we obtain
$$w \in (S_{g_{j}} (\bar{x}) \cap K_{j} - \bar{x})^{\circ},$$ 
and thus $w \in [\partial_{\beta_{j}, \gamma_{j}}^{K_{j}} g_{j}(\bar{x})]^{\infty} \cap(S_{g_{j}}(\bar{x}) \cap K_{j} - \bar{x})^{\circ}$ with $w \neq 0$, a contradiction. Therefore, $\{\mu_{j}^{k}\}_{j \in I^{+}}$, $\{v_{j}^{k}\}_{j \in I^{+}}$ and $\{v_{j}^{k, \infty}\}_{j \in I^{0}}$ are bounded. 

Finally, the result follows from $w^{k} = \sum_{j \in I^{++}} \mu_{j}^{k} v^{k}_{j} + \sum_{j \in I^{+0}} \mu_{j}^{k} v^{k}_{j} + \sum_{j \in I^{0}} v^{k, \infty}_{j}$, after taking the limit of an adequate subsequence.
\end{proof}

We point out that it is possible to obtain a description of $N(\Omega, \bar{x})$ in terms of strong subdifferentials without using the horizon subdifferential $\partial^{\infty} g_{j}(\bar{x})$ for all $j$. In fact, if $N(\Omega, \bar{x}) = \overline{ \bigcup_{\mu \in \mathbb{R}_{+}^{|I(\bar{x})|}} \left\{\sum_{j \in I(\bar{x}): \mu_{j} > 0} \mu_{j} \partial_{\beta_{j}, \gamma_{j}}^{K_{j}} g_{j} (\bar{x}) \right\}}$, then, under Assumption (a) of Theorem \ref{theo:closure} and following the argument of the proof of Theorem \ref{theo:closure},  with the proper modification, it is possible to show that 
  \begin{align*}
 N(\Omega, \bar{x}) = \{0\} \cup \bigcup_{\mu \in \mathbb{R}_{+}^{ |I(\bar{x})|}} \left\{ \sum_{j \in I(\bar{x}): \mu_{j}>0} \mu_{j} \partial_{\beta_{j}, \gamma_{j}}^{K_{j}} g_{j} (\bar{x}) \right\}.      
  \end{align*}
Indeed, as a direct application, we consider the next example. 

\begin{example}\label{ex:itemii}
Let us consider the function $g: \mathbb{R} \rightarrow \mathbb{R}$ defined by 
 \begin{align*}
  g(x) = \left\{
  \begin{array}{rl}
  2x & {\rm if} ~ x > 0, \\
  0 & {\rm if} ~ x=0,  \\
  -x-1 & {\rm if} ~ x<0.
 \end{array}
 \right.     
\end{align*}

Take $\Omega := \{x \in \mathbb{R} \mid g(\bar{x}) \leq 0\} = [-1, 0]$. Clearly, $N(\Omega, \bar{x}) = \mathbb{R}_{+}$, $N_{g} (0) = \mathbb{R}_{+}$, $\partial_{1, 1}^{[-1, 1]} g(0) = [4^{-1}, 2]$ and $N(\Omega, 0) = \overline{\bigcup_{\mu \in \mathbb{R}_{++}} \mu [4^{-1}, 2]}$. 
Here, item (a) of Theorem \ref{theo:closure} holds at $\bar{x}=0$, since $N(\Omega, \bar{x})$ is pointed, $0 \notin \partial_{1, 1 }^{[-1, 1]} g(0)$ and $[\partial_{1, 1 }^{[-1, 1]} g(0)]^{\infty} \cap (S_{g} (\bar{x})-\bar{x})^{\circ} =([4^{-1}, 2])^{\infty} \cap N_{g}(0) = \{0\} \cap \mathbb{R}_{+} = \{0\}$. 
Thus, we have $N(\Omega, 0) = \{0\} \cup \bigcup_{\mu \in \mathbb{R}_{++}} \mu [4^{-1}, 2]$. 
\end{example}

Now, consider the following assumption to ensure the fulfillment of the KKT conditions. Let $\bar{x}\in \Omega$ be a feasible point. Then we say that $\partial^{\infty}$-no nonzero abnormal multiplier condition holds, if 
\begin{align}\label{eqn:cq1}\tag{$\partial^{\infty}$-NNAMC}
\begin{array}{ll}
& 0=w+\sum_{j\in I^{+}} \mu_{j}v_{j} + \sum_{j\in I^{0}} v_{j}^{\infty}  \\
    & w \in \partial^{\infty}f(\bar{x}) \\ 
    & v_{j}\in \partial_{\beta_{j}, \gamma_{j}}^{K_{j}} g_{j}(\bar{x}), \ j \in I^{+}:=\{j \in I(\bar{x}):\mu_{j}>0\} \\
    & v_{j}^{\infty} \in \partial^{\infty}g_{j}(\bar{x}), \ j \in I^{0}:=\{j \in I(\bar{x}):\mu_{j}=0\}
      \end{array}
   \implies 
      \begin{array}{ll}
           & w=0,  \\
           & I^{+}=\emptyset ~~ \text{ and } \\
           & v_{j}^{\infty}=0, \ \forall \, j \in I^{0}.
      \end{array}
 \end{align}

From Theorem \ref{theo:fjnecessary}, we get the next result. 
\begin{theorem}\label{theo:kkt}
    Let $\bar{x} $ be a local minimizer of \eqref{main:problem}. 
    If \eqref{eqn:CQ} and \eqref{eqn:cq1} conditions hold, then the generalized KKT conditions is valid at $\bar{x}$.
\end{theorem}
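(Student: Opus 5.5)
The argument combines Theorem \ref{theo:fjnecessary} with \eqref{eqn:CQ} and then uses \eqref{eqn:cq1} to exclude the abnormal case. Since $\bar{x}$ is a local minimizer of \eqref{main:problem}, it is a local solution of minimizing $f$ over the closed convex set $\mathcal{C}:=\Omega$. By Theorem \ref{theo:fjnecessary} there exist $\gamma_{0}\geq 0$, $v\in\partial f(\bar{x})$ and $v^{\infty}\in\partial^{\infty} f(\bar{x})$ such that $-(\gamma_{0}v+\widehat{\gamma}_{0}v^{\infty})\in N(\Omega,\bar{x})$. Here $\widehat{\gamma}_{0}=1$ and $\|v^{\infty}\|=1$ when $\gamma_{0}=0$, while in case (i) of that proof one may take $\gamma_{0}=1$. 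By \eqref{eqn:CQ}, this normal vector can be written as $\sum_{j\in I^{+}}\mu_{j}v_{j}+\sum_{j\in I^{0}}v_{j}^{\infty}$, with $v_{j}\in\partial^{K_{j}}_{\beta_{j},\gamma_{j}}g_{j}(\bar{x})$ for $j\in I^{+}$ and $v_{j}^{\infty}\in\partial^{\infty}g_{j}(\bar{x})$ for $j\in I^{0}$. Hence
$$0=\gamma_{0}v+\widehat{\gamma}_{0}v^{\infty}+\sum_{j\in I^{+}}\mu_{j}v_{j}+\sum_{j\in I^{0}}v_{j}^{\infty}.$$

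The key step is to show that $\gamma_{0}>0$. Suppose instead that $\gamma_{0}=0$. Then $0=v^{\infty}+\sum_{j\in I^{+}}\mu_{j}v_{j}+\sum_{j\in I^{0}}v_{j}^{\infty}$ with $w:=v^{\infty}\in\partial^{\infty}f(\bar{x})$, which is exactly the premise of \eqref{eqn:cq1}. That condition forces $w=0$, contradicting $\|v^{\infty}\|=1$. So $\gamma_{0}>0$ and $\widehat{\gamma}_{0}=0$, and we are in case (i) of the proof of Theorem \ref{theo:fjnecessary}, which gives $0\in v+N(\Omega,\bar{x})$.

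It remains to match the normalization in Definition \ref{def:fj}. Put $s:=1+\sum_{j\in I^{+}}\mu_{j}>0$, define $\tilde{\gamma}_{0}:=1/s$ and $\tilde{\mu}_{j}:=\mu_{j}/s$ (with $\tilde{\mu}_{j}=0$ for $j\in I^{0}$), and divide the identity by $s$. The terms $\sum_{j\in I^{0}}v_{j}^{\infty}$ become $\sum_{j\in I^{0}}v_{j}^{\infty}/s$. These still lie in $\partial^{\infty}g_{j}(\bar{x})$ because the horizon subdifferential is a cone whenever it is nonempty; if it is empty, such indices could not appear in the decomposition at all. The result is \eqref{eqn:fj} with $\tilde{\gamma}_{0}+\sum_{j}\tilde{\mu}_{j}=1$ and $\tilde{\gamma}_{0}>0$, which is the generalized KKT condition.

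The main obstacle is the bookkeeping at the interface of the two results. The alternative in Theorem \ref{theo:fjnecessary} must be invoked in exactly the form that \eqref{eqn:cq1} excludes, namely with the unit horizon vector $w$. The positive scaling of the $\partial^{\infty}g_{j}$ terms must also be justified, and this rests on the cone property of $\partial^{\infty}$.
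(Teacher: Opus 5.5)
Your argument is correct and follows the route the paper intends; the paper only says the result follows from Theorem \ref{theo:fjnecessary}. You combine that theorem with \eqref{eqn:CQ}, use \eqref{eqn:cq1} to rule out the abnormal case $\gamma_{0}=0$ via $\|v^{\infty}\|=1$, and renormalize using the cone property of $\partial^{\infty}g_{j}(\bar{x})$. Two small remarks: invoking Theorem \ref{theo:fjnecessary} tacitly requires $f$ to be lsc (as in Theorem \ref{theo:fjnecessary1}, though not restated here), and you need not reach into case (i) of its proof to get $\gamma_{0}=1$, since for any $\gamma_{0}>0$ you can take $s:=\gamma_{0}+\sum_{j\in I^{+}}\mu_{j}$.
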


Now, we turn our attention to deriving sufficient conditions based on the generalized FJ/KKT conditions. We start with the following theorem. 

\begin{theorem}\label{theo:strongsuffstrong}(Sufficient condition)
Let $\bar{x} \in \Omega$ be a feasible point satisfying the generalized FJ, that is, there exist scalars $\gamma_{0}$, $\mu_{j}\geq 0$ for all $j \in I(\bar{x})$, and vectors $v\in \partial f(\overline{x})$, $v^{\infty} \in \partial^{\infty} f(\overline{x})$, $v_{j}\in \partial^{K_{j}}_{\beta_{j}, \gamma_{j}} g_{j}(\overline{x})$ (if $\mu_{j}>0$) and $v^{\infty}_{j}\in \partial^{\infty}g_{j}(\bar{x})$ (if $\mu_{j}=0$) such that $\gamma_{0} +\sum_{j \in I(\bar{x})}\mu_{j}=1$ and 
\begin{align*}
0=\gamma_{0}v+\widehat{\gamma}_{0} v^{\infty}+
    \sum_{j \in I(\bar{x}): \mu_{j}>0} \mu_{j}v_{j}+
    \sum_{j \in I(\bar{x}): \mu_{j}=0} v_{j}^{\infty}, 
\end{align*}
where $\widehat{\gamma}_{0} = 1$ if $\gamma_{0} = 0$ and $\widehat{\gamma}_{0} = 0$ if $\gamma_{0} > 0$.

Suppose that $\Omega \subset \cap_{j \in I(\bar{x})}K_{j}$, that $\{g_{j}\}_{j \in I(\bar{x})}$ are strongly quasiconvex functions with modulus $\gamma_{j }\geq 0$ for all $j \in I(\bar{x})$, and $S_{g_{j}} (\cdot)$ is inner semicontinuous at $\bar{x}$ for all $j \in I(\bar{x})$.  
Then, there exists an open convex neighborhood $V$ of $\bar{x}$ such that   
        \begin{align}\label{eqn:strongkktsuff0}
    \bar{\mu}\|y-\bar{x}\|^{2} 
    \leq 
    \gamma_{0} \langle v, y-\bar{x} \rangle+ \widehat{\gamma}_{0} \langle v^{\infty}, y-\bar{x} \rangle, ~ \forall ~ y \in \Omega\cap V, 
        \end{align}  
where $\bar{\mu}\!:=\!\frac{1}{2} \sum_{j \in I(\bar{x}): \mu_{j}>0} \beta_{j} \gamma_{j} \mu_{j}$. 
Further, if $\bar{x}$ is a generalized KKT point then 
\begin{align}\label{eqn:strongkktsuff01}
\frac{1}{\gamma_{0}}\bar{\mu}\|y-\bar{x}\|^{2} \leq \langle v, 
y-\bar{x}\rangle, ~ \forall ~ 
y \in \Omega\cap V.
\end{align} 
\end{theorem}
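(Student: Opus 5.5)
The plan is to pair the FJ relation with $y-\bar{x}$ for feasible $y$ near $\bar{x}$ and bound every constraint term from above. Fix $y \in \Omega \cap V$, where $V$ is an open convex neighborhood of $\bar{x}$ chosen below. Taking the inner product of the FJ identity with $y-\bar{x}$ gives
\begin{align*}
\gamma_{0}\langle v, y-\bar{x}\rangle + \widehat{\gamma}_{0}\langle v^{\infty}, y-\bar{x}\rangle = -\sum_{j \in I(\bar{x}): \mu_{j}>0} \mu_{j}\langle v_{j}, y-\bar{x}\rangle - \sum_{j \in I(\bar{x}): \mu_{j}=0} \langle v_{j}^{\infty}, y-\bar{x}\rangle .
\end{align*}
It therefore suffices to show two inequalities for every $y \in \Omega \cap V$. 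For $\mu_{j}>0$ we need $\langle v_{j}, y-\bar{x}\rangle \leq -\frac{\beta_{j}\gamma_{j}}{2}\|y-\bar{x}\|^{2}$. For $\mu_{j}=0$ we need $\langle v_{j}^{\infty}, y-\bar{x}\rangle \leq 0$. Summing these bounds with weights $\mu_{j}$ gives exactly the right-hand side bound $\bar{\mu}\|y-\bar{x}\|^{2}$ in \eqref{eqn:strongkktsuff0}.

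The first inequality is the easy one. For $j\in I(\bar{x})$ we have $g_{j}(\bar{x})=0$, and $y\in\Omega$ gives $g_{j}(y)\le 0$, so $\max\{g_{j}(y),g_{j}(\bar{x})\}=g_{j}(\bar{x})$. Since $\Omega\subset K_{j}$, definition \eqref{def:subd} applies at $y$. Dividing by $\lambda>0$ and letting $\lambda\to 0^{+}$ gives $\langle v_{j}, y-\bar{x}\rangle \le -\frac{\beta_{j}\gamma_{j}}{2}\|y-\bar{x}\|^{2}$, exactly as in the proof of Proposition \ref{propo:normalcone}. This holds for all $y\in\Omega$, so no neighborhood is needed here.

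The second inequality is where the strong quasiconvexity and the inner semicontinuity assumptions enter. I would invoke Proposition \ref{propo:limitinghorizon}(b) for each $g_{j}$ with $j\in I(\bar{x})$ and $\mu_{j}=0$. That result needs a convex neighborhood $V_{j}$ with $\bar{x}\in \mathrm{int}\,V_{j}$ on which $g_{j}$ is strongly quasiconvex, and inner semicontinuity of $S_{g_{j}}(\cdot)\cap V_{j}$ at $\bar{x}$. Here $g_{j}$ is strongly quasiconvex globally, so any open convex $V_{j}$ works. The hypothesis gives inner semicontinuity of $S_{g_{j}}(\cdot)$ itself, and intersecting with an open set preserves it: a point $y\in S_{g_{j}}(\bar{x})\cap V_{j}$ has approximating points $y^{k}\to y$ that eventually lie in the open set $V_{j}$. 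The proposition then yields $v_{j}^{\infty}\in (S_{g_{j}}(\bar{x})\cap V_{j}-\bar{x})^{\circ}$.

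Now take $V$ to be the intersection of the finitely many $V_{j}$, for instance a common open ball around $\bar{x}$. Since $\Omega\subset S_{g_{j}}(\bar{x})$ for active $j$, every $y\in\Omega\cap V$ satisfies $\langle v_{j}^{\infty},y-\bar{x}\rangle\le 0$.

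The main obstacle is justifying the use of Proposition \ref{propo:limitinghorizon}(b), whose proof is only sketched ("similar"). If needed, I would redo it directly. By Proposition \ref{lem:inclusion}, the approximating regular subgradients $v^{k}\in\widehat{\partial}g_{j}(x^{k})$ satisfy $\langle v^{k},y^{k}-x^{k}\rangle\le -\frac{\gamma_{j}}{2}\|y^{k}-x^{k}\|^{2}\le 0$. Multiplying by $t_{k}\to0^{+}$ and passing to the limit, using inner semicontinuity to produce the points $y^{k}$, gives the polar inclusion.

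Finally, for the KKT case $\gamma_{0}>0$ we have $\widehat{\gamma}_{0}=0$, and dividing \eqref{eqn:strongkktsuff0} by $\gamma_{0}$ gives \eqref{eqn:strongkktsuff01}.
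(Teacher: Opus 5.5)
Your proposal is correct and follows essentially the same route as the paper. It pairs the FJ identity with $y-\bar{x}$, bounds the strong-subdifferential terms on all of $\Omega$ via the $\lambda\to 0^{+}$ limit in the definition, bounds the horizon terms via Proposition \ref{propo:limitinghorizon}(b) on $V=\cap_{j}V_{j}$, and divides by $\gamma_{0}$ in the KKT case. Your argument that inner semicontinuity of $S_{g_j}(\cdot)$ passes to $S_{g_j}(\cdot)\cap V_j$ for open $V_j$ is more careful than the paper's write-up. The same goes for your use of the inclusion $\Omega\cap V\subset S_{g_j}(\bar{x})\cap V_j$, where the paper asserts an equality that only holds as an inclusion.
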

   \begin{proof}
First, set $I^{+} := \{j \in I(\bar{x})\mid \, \mu_{j}>0\}$ and $I^{0} := \{j \in I(\bar{x})\mid \, \mu_{j}=0\}$. By Proposition \ref{propo:limitinghorizon}, for every $j \in I^{0}$ we have $v_{j} \in (S_{g_{j}}(\bar{x}) \cap V_{j}-\bar{x})^{\circ}$ for some $V_{j}$ open convex neighborhood of $\bar{x}$. Thus, $\langle v_{j}^{\infty}, y-\bar{x} \rangle \leq 0$ for all $y \in \cap_{j \in I^{0}}S_{g_{j}} (\bar{x}) \cap V_{j}$. 

On the other hand, for $j \in I^{+}$, since $v_{j}\in \partial^{K_{j}}_{\beta_{j}, \gamma_{j}} g_{j}(\overline{x})$, we obtain $\langle v_{j}, y-\bar{x}\rangle\leq -(\beta_{j}\gamma_{j}/2)\|y-\bar{x}\|^{2}$ for every $y \in \cap_{j \in I^{+}}S_{g_{j}}(\bar{x}) \cap K_{j}$.
By adding the expressions, we get 
   \begin{align}\label{eqn:inequa}
 \sum_{j\in I^{+}} \mu_{j} \langle v_{j}, y-\bar{x} \rangle
 +\sum_{j \in I^{0}} \langle v_{j}^{\infty}, y-\bar{x} \rangle 
 \leq -\frac{1}{2}\sum_{j\in I^{+}} \mu_{j}\beta_{j}\gamma_{j}\|y-\bar{x}\|^{2},
   \end{align}
for every $y \in (\cap_{j \in I^{+}}S_{g_{j}}(\bar{x})\cap K_{j})\cap(\cap_{j\in I^{0}}S_{g_{j}}(\bar{x})\cap V_{j})=\Omega \cap V$ where $V:=\cap_{j \in I^{0}}V_{j}$ is an open convex neighborhood of $\bar{x}$. 
From \eqref{eqn:inequa}, we get  
\begin{align*}
 \frac{1}{2}\sum_{j \in I^{+}} \mu_{j}\beta_{j}\gamma_{j}\|y-\bar{x}\|^{2} 
 & \leq - \sum_{j \in I^{+}} \mu_{j} \langle v_{j}, y- \bar{x} \rangle - \sum_{j \in I^{0}} \langle v_{j}^{\infty}, y- \bar{x} \rangle \\ \nonumber
 & \leq \left \langle - \sum_{j \in I^{+}} \mu_{j} v_{j}- \sum_{j \in I^{0}}  v_{j}^{\infty}, y-\bar{x} \right \rangle \\ \nonumber
 & = \gamma_{0} \langle v, y-\bar{x} \rangle + \widehat{\gamma}_{0} \langle v^{\infty}, y-\bar{x} \rangle, ~ \forall ~ y \in \Omega \cap V.
\end{align*}
The above implies \eqref{eqn:strongkktsuff0}. 
Note that \eqref{eqn:strongkktsuff01} follows from \eqref{eqn:strongkktsuff0} when $\widehat{\gamma}_{0}=0$. 
\end{proof}

We point out that Theorem \ref{theo:strongsuffstrong} gives a sufficient optimality condition under classical assumptions. In fact, consider the following statement. 

\begin{theorem}\label{theo:suffclassical}
Let $f$ be a locally Lipchitz and $\alpha$-strongly pseudoconvex function with respect to $\partial f(\bar{x})$ (that is, $\text{sup}_{\xi \in \partial f(\bar{x})}\langle \xi, y-\bar{x} \rangle\geq 0$ implies $f(y)\geq f(\bar{x})+\alpha\|y-\bar{x}\|^{2}$). 
 
 Then, if $\bar{x}$ is a feasible point that satisfies the hypothesis of Theorem \ref{theo:strongsuffstrong} with $f$ locally Lipchitz and $\alpha$-strongly pseudoconvex, then $f(y)\geq f(\bar{x})+\alpha\|y-\bar{x}\|^{2}$ for all $y \in \Omega\cap V$, for some open neighborhood $V$ of $\bar{x}$. 
\end{theorem}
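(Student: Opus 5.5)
The plan is to combine the inequality \eqref{eqn:strongkktsuff0} of Theorem \ref{theo:strongsuffstrong} with the strong pseudoconvexity of $f$. The delicate point is the Fritz-John multiplier $\gamma_{0}$, so I split into the cases $\gamma_{0}>0$ and $\gamma_{0}=0$.

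\textbf{Step 1: the case $\gamma_{0}=0$ is vacuous.} Since $f$ is locally Lipschitz at $\bar{x}$, \cite[Theorem 9.13]{rwets} gives $\partial^{\infty} f(\bar{x})=\{0\}$. Hence $v^{\infty}=0$.
\begin{itemize}
\item If $\gamma_{0}=0$, the right-hand side of \eqref{eqn:strongkktsuff0} vanishes.
\item More to the point, the generalized FJ condition with $\gamma_{0}=0$ requires $\widehat{\gamma}_{0}=1$ and a unit vector $v^{\infty}\in\partial^{\infty}f(\bar{x})$, as in Theorem \ref{theo:fjnecessary}. This is impossible when $\partial^{\infty}f(\bar{x})=\{0\}$.
\end{itemize}
So under local Lipschitz continuity we may take $\gamma_{0}>0$, and $\bar{x}$ is a generalized KKT point.

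If one insists on the case $\gamma_{0}=0$ with $v^{\infty}=0$ allowed, it is degenerate and I would remark that it is excluded by the normalization. This is the step I expect to be the main obstacle: the statement's "hypothesis of Theorem \ref{theo:strongsuffstrong}" must be read with the norm convention $\|v^{\infty}\|=1$ when $\gamma_{0}=0$. Otherwise $\gamma_{0}=0$ gives only $\bar{\mu}\|y-\bar{x}\|^{2}\le 0$, which carries no information about $f$.

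\textbf{Step 2: the KKT case.} With $\gamma_{0}>0$, Theorem \ref{theo:strongsuffstrong} yields \eqref{eqn:strongkktsuff01}. That is, there is an open convex neighborhood $V$ of $\bar{x}$ with
\[
\langle v, y-\bar{x}\rangle \ge \tfrac{1}{\gamma_{0}}\bar{\mu}\|y-\bar{x}\|^{2}\ge 0 \quad \text{for all } y\in\Omega\cap V,
\]
where $v\in\partial f(\bar{x})$. Consequently
\[
\sup_{\xi\in\partial f(\bar{x})}\langle \xi, y-\bar{x}\rangle \ge \langle v, y-\bar{x}\rangle \ge 0 \quad \text{for every } y\in\Omega\cap V.
\]

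\textbf{Step 3: conclusion.} By $\alpha$-strong pseudoconvexity of $f$ with respect to $\partial f(\bar{x})$, the inequality of Step 2 gives
\[
f(y)\ge f(\bar{x})+\alpha\|y-\bar{x}\|^{2} \quad \text{for all } y\in\Omega\cap V,
\]
which is the claim. In particular $\bar{x}$ is a strict local minimizer, and a strong one when $\alpha>0$.

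The only inputs are Theorem \ref{theo:strongsuffstrong} and the Lipschitz characterization of the horizon subdifferential. The convexity and openness of $V$ are inherited directly from that theorem.
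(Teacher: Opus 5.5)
Your proof is the paper's: for $\gamma_{0}>0$, \eqref{eqn:strongkktsuff01} gives $\sup_{\xi\in\partial f(\bar{x})}\langle \xi, y-\bar{x}\rangle\ge 0$ on $\Omega\cap V$, strong pseudoconvexity finishes, and your handling of $\gamma_{0}=0$ is more accurate than the paper's, which writes $\partial^{\infty}f(\bar{x})=\emptyset$ (it is $\{0\}$, as you say) and simply asserts $\gamma_{0}>0$. The normalization $\|v^{\infty}\|=1$ you invoke (or a KKT assumption) is genuinely needed: with $f(x)=g_{1}(x)=x$, $K_{1}=\mathbb{R}$, $\beta_{1}=1$, $\gamma_{1}=0$ one has $0\in\partial^{\mathbb{R}}_{1,0}g_{1}(0)$, so $\bar{x}=0$ meets the hypotheses of Theorem \ref{theo:strongsuffstrong} with $\gamma_{0}=0$, $v^{\infty}=0$ yet is not a local minimizer (minor: for $\alpha=0$ your closing remark gives only a local, not strict, minimizer).
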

    \begin{proof}
 In fact, since $f$ is locally Lipschitz we have $\partial^{\infty}f(\bar{x})=\emptyset$.
 Now, from expression \eqref{eqn:strongkktsuff0}, we get $0\leq \bar{\mu}\|y-\bar{x}\|^{2}\leq \gamma_{0} \text{ sup}_{v \in \partial f(\bar{x})}\langle v, y-\bar{x}\rangle$, for every $y \in V\cap \Omega$.
 As $\gamma_{0}>0$ and due to the $\alpha$-strongly pseudoconvexity of $f$, we get that $f(y)\geq f(\bar{x})+\alpha\|y-\bar{x}\|^{2}$ for all $y \in \Omega\cap V$.       
    \end{proof}
    
\begin{remark}\label{remark:1}      
We mention that convex functions are $\alpha$-strongly pseudoconvex for $\alpha=0$ and $\alpha$-strongly quasiconvex functions are $\alpha$-strongly pseudoconvex, too, (see \cite{ADSZ,KS} for more on strongly pseudoconvex functions). 
\end{remark}

The following simple example shows an instance in which our results can be applied while no other KKT result does.

\begin{example}\label{ex:sufficiency}
 Let $f, g: \mathbb{R} \rightarrow \mathbb{R} \cup \{+\infty\}$. We consider the problem of minimizing $f(x) = x$ subject to $g(x)\leq 0$, where $g$ is given as in Example \ref{ex:fj}. Observe that 
    \begin{align*}
 0 \in \nabla f(0) + \partial_{1, 1}^{\mathbb{R}} g(0) = 1 + (-\infty, -1/2]=(-\infty, 1/2].       
    \end{align*}
 
 Since $f$ is convex, it follows from Remark \ref{remark:1} that $\bar{x} = 0$ is an optimality solution. Here, $\partial^{\infty} g(0) = \partial g(0) = \widehat{\partial} g(0) = \partial^{q}g(0)=\emptyset$. 
 Here, Theorem \ref{theo:strongsuffstrong} can be applied even in situations where classical sufficient KKT conditions fail. 
\end{example}

Another interesting application of our sufficient condition is the following result, which concerns quadratic fractional programming. 

\begin{proposition}\label{propo:sufficiencyquadratic}
Consider the quadratic fractional programming \eqref{eqn:quadratic} optimization 
\begin{align}\label{eqn:quadratic}\tag{QFP}
 {\rm minimize } \ f(x) ~ {\rm subject ~ to } ~ \frac{g_{1}(x)}{g_{2}(x)} \leq \alpha,      
\end{align}
where $\alpha \in \mathbb{R}$, $g_{1}, g_{2}: \mathbb{R}^{n} \rightarrow \mathbb{R}$    are functions such that $g_{1}$ is strongly convex with modulus $\gamma>0$, and $g_{2}$ is a positive affine function. 
Let $m, M$ be two positive numbers with $m\leq M$. We suppose that $K := \{x \in \mathbb{R}^{n}: m\leq g_{2}(x) \leq M\} \neq \emptyset$.

If there exists a feasible point $\bar{x}$ of \eqref{eqn:quadratic}, vectors $v \in \partial f(\bar{x})$, $v^{\infty} \in \partial^{\infty} f(\bar{x})$ and scalars $\gamma_{0}, \hat{\gamma}_{0}$, $\mu>0$ satisfying $g_{1}(\bar{x}) = \alpha g_{2} (\bar{x})$, $\gamma_{0} + \mu=1$ such that 
\begin{align}\label{eqn:suffquadratic}
 0 \in \gamma_{0} v + \hat{\gamma}_{0} v^{\infty} + \mu \partial^{FM} (g_{1} - \alpha g_{2}) (\bar{x}), 
\end{align}
where $\widehat{\gamma}_{0} = 1$ if $\gamma_{0} = 0$ and $\widehat{\gamma}_{0} = 0$ if $\gamma_{0} > 0$.
       
Then, we can find an open convex neighborhood $V$ of $\bar{x}$ such that
\begin{align*}
\frac{1}{2}\gamma \mu \|y-\bar{x}\|^{2} \leq  \gamma_{0}\langle v, y - \bar{x} \rangle +
\gamma_{0} \langle v, y - \bar{x} \rangle, \ \ \forall y \in \Omega \cap V.
\end{align*}
In particular, if $f$ is a locally Lipchitz and $\alpha$-strongly pseudoconvex function with respect to $\partial f(\bar{x})$, then $f(y)\geq f(\bar{x})+\alpha\|y-\bar{x}\|^{2}$ for all $y \in \Omega\cap V$. 
\end{proposition}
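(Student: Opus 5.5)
The plan is to reduce Proposition \ref{propo:sufficiencyquadratic} to Theorem \ref{theo:strongsuffstrong} and then Theorem \ref{theo:suffclassical}, with the single constraint $h := g_{1} - \alpha g_{2}$. Since $g_{2}$ is positive, the feasible set of \eqref{eqn:quadratic} is $\Omega = \{x : g_{1}(x)/g_{2}(x) \leq \alpha\} = \{x : h(x) \leq 0\}$. The function $h$ is strongly convex with modulus $\gamma$, because $g_{1}$ is and $\alpha g_{2}$ is affine. Hence $\Omega$ is closed and convex. The hypothesis $g_{1}(\bar{x}) = \alpha g_{2}(\bar{x})$ says exactly that the constraint is active, so $I(\bar{x}) = \{1\}$.

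Next I would check the hypotheses of Theorem \ref{theo:strongsuffstrong} for $h$, taking $K_{1} := S_{h}(\bar{x})$ (equal to $\Omega$, since $h(\bar{x}) = 0$) and $\beta_{1} = 1$, $\gamma_{1} = \gamma$. Strong convexity gives strong quasiconvexity with modulus $\gamma$ by the scheme \eqref{scheme}. Inner semicontinuity of $S_{h}(\cdot)$ at $\bar{x}$ follows from continuity and convexity of $h$. If $\bar{x}$ minimizes $h$, the level set is a singleton and the claim is trivial. Otherwise $h$ takes values below $h(\bar{x})$, so by continuity every point of $S_h(\bar{x})$ is a limit of points in the strict sublevel set, and those points lie in $S_h(y)$ for $y$ near $\bar{x}$.

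The key step is to show $\partial^{FM} h(\bar{x}) \subset \partial^{K_1}_{1,\gamma} h(\bar{x})$, so that \eqref{eqn:suffquadratic} becomes a generalized FJ relation with $\mu > 0$. Take $\xi \in \partial^{FM} h(\bar{x})$. Because $h$ is strongly convex, $\xi$ actually satisfies the stronger inequality $h(y) \geq h(\bar{x}) + \langle \xi, y - \bar{x}\rangle + \frac{\gamma}{2}\|y - \bar{x}\|^{2}$. This follows by applying the subgradient inequality at $\bar{x} + t(y-\bar{x})$ together with the strong convexity inequality, dividing by $t$ and letting $t \to 0^{+}$. For $y \in S_h(\bar{x})$ this yields $\langle \xi, y - \bar{x}\rangle \leq -\frac{\gamma}{2}\|y-\bar{x}\|^{2}$, which is the SS inequality \eqref{eq:SS}. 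To obtain the full $(1,\gamma,K_1)$ inequality \eqref{def:subd} for all $\lambda\in[0,1]$, I would instead invoke Proposition \ref{lem:inclusion}, since $\partial^{FM} \subset \widehat{\partial}$. That proposition gives exactly membership in $\partial^{K}_{1,\gamma}$ with $K = S_h(\bar{x}) \cap V$; because $h$ is strongly convex on all of $\mathbb{R}^n$, I can take $V = \mathbb{R}^{n}$.

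With this inclusion in hand, Theorem \ref{theo:strongsuffstrong} applies with $\mu_{1} = \mu$ and $v_{1} \in \partial^{FM} h(\bar{x})$. It gives an open convex neighborhood $V$ with $\frac12 \gamma\mu\|y-\bar{x}\|^{2} \leq \gamma_0\langle v, y-\bar{x}\rangle + \widehat{\gamma}_{0}\langle v^{\infty}, y-\bar{x}\rangle$ on $\Omega \cap V$. This is the displayed estimate; the statement's repeated $\gamma_0\langle v,\cdot\rangle$ is read as the $\widehat{\gamma}_0 v^\infty$ term.

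For the final claim, $f$ locally Lipschitz means $\partial^{\infty} f(\bar{x})$ contributes nothing, so $\gamma_{0} > 0$. Theorem \ref{theo:suffclassical} then gives $f(y) \geq f(\bar{x}) + \alpha\|y-\bar{x}\|^{2}$ on $\Omega \cap V$. The main obstacle is the inclusion step together with the case $\widehat{\gamma}_0 = 1$, where local Lipschitzness has to rule out the abnormal case. The set $K$ does not enter: the statement defines $K$ but never uses it, so I will not need it.
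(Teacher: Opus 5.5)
Your proof is correct in substance but takes a different route from the paper.

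\textbf{The paper's route.} The paper keeps the fractional function $h=(g_{1}-\alpha g_{2})/g_{2}$ as the constraint. It imports \cite[Proposition 4.1]{LMT}, which gives $M^{-1}\rho\,\partial^{FM}(g_{1}-\alpha g_{2})(\bar{x})\subset\partial^{K}_{\rho,M^{-1}\gamma}h(\bar{x})$. So \eqref{eqn:suffquadratic} becomes a generalized FJ relation with multiplier $\mu M/\rho$, parameters $(\rho,\gamma/M)$ and set $K$. Theorem \ref{theo:strongsuffstrong} then yields $\bar{\mu}=\frac{1}{2}\frac{\mu M}{\rho}\rho\frac{\gamma}{M}=\frac{1}{2}\mu\gamma$.

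\textbf{Your route.} You use $g_{2}>0$ to rewrite the constraint as the strongly convex inequality $g_{1}-\alpha g_{2}\leq 0$. You take $K_{1}=S_{h}(\bar{x})=\Omega$, $\beta_{1}=1$, $\gamma_{1}=\gamma$, and get $\partial^{FM}h(\bar{x})\subset\widehat{\partial}h(\bar{x})\subset\partial^{\Omega}_{1,\gamma}h(\bar{x})$ from Proposition \ref{lem:inclusion} with $V=\mathbb{R}^{n}$. Then $\bar{\mu}=\frac{1}{2}\gamma\mu$ comes out directly.

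\textbf{Comparison.} Your argument needs no external result and actually verifies the hypotheses of Theorem \ref{theo:strongsuffstrong}. In particular, $\Omega\subset K_{1}$ is automatic for you. The paper's route needs $\Omega\subset\{m\leq g_{2}\leq M\}$ and does not check it. Your route also shows that $K$, $m$, $M$ play no role in the conclusion. What the paper's route buys is a multiplier rule expressed through the strong subdifferential of the nonconvex, strongly quasiconvex ratio $g_{1}/g_{2}$ itself, which is the point of the illustration.

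\textbf{Smaller remarks.} The detour through Proposition \ref{lem:inclusion} is unnecessary. For $y\in S_{h}(\bar{x})$, your SS inequality already bounds the right-hand side of \eqref{def:subd} minus $h(\bar{x})$ by $-\frac{\lambda^{2}}{2}(1+\gamma)\|y-\bar{x}\|^{2}\leq 0$. Your reading of the duplicated term as $\widehat{\gamma}_{0}\langle v^{\infty},y-\bar{x}\rangle$ is what \eqref{eqn:strongkktsuff0} gives.

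\textbf{One step needs repair.} Local Lipschitz continuity gives $\partial^{\infty}f(\bar{x})=\{0\}$, not the empty set, so it does not by itself force $\gamma_{0}>0$. If $\gamma_{0}=0$, then $\mu=1$ and $v^{\infty}=0$. Your first estimate then reads $\frac{\gamma}{2}\|y-\bar{x}\|^{2}\leq 0$ on $\Omega\cap V$, so $\Omega\cap V=\{\bar{x}\}$ and the final claim is trivial. Equivalently, $0\in\partial^{FM}h(\bar{x})$ makes $\bar{x}$ the unique minimizer of $h$, so $\Omega=\{\bar{x}\}$. You need Theorem \ref{theo:suffclassical} only when $\gamma_{0}>0$. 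The paper's proof of Theorem \ref{theo:suffclassical} makes the same slip, asserting $\partial^{\infty}f(\bar{x})=\emptyset$.

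Finally, in your inner semicontinuity check, the strict sublevel set is dense in $S_{h}(\bar{x})$ because of convexity (segments toward a point with $h<h(\bar{x})$), not continuity. In any case, Theorem \ref{theo:strongsuffstrong} uses that hypothesis only for indices with $\mu_{j}=0$, and there are none here.
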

   \begin{proof}
The proof is a consequence of Theorem \ref{theo:strongsuffstrong}. In fact, first, we observe that $g=g_{1}-\alpha g_{2}$ is strongly convex of modulus $\gamma>0$. 
Thus, by \cite[Proposition 4.1]{LMT}, if we set $h(x)=g(x)/g_{2}(x)$, we have $M^{-1}\rho\  \partial^{FM}(g-h(\bar{x})g_{2}) (\bar{x})\subset \partial_{\rho, M^{-1}\gamma}^{K} h(\bar{x})$, $\forall \rho>0$. Hence, we get 
          \begin{align*}
0 \in \gamma_{0} v + \hat{\gamma}_{0} v^{\infty} + \mu \partial^{FM} (g_{1} - \alpha g_{2}) (\bar{x}) 
\subset 
\gamma_{0} v + \hat{\gamma}_{0} v^{\infty} + 
              \frac{\mu M}{\rho} \partial_{\rho, M^{-1}\gamma}^{K} h(\bar{x}).
          \end{align*}
 From \eqref{eqn:strongkktsuff0}, one has $\bar{\mu} \|y-\bar{x}\|^{2} \leq  \gamma_{0}\langle v, y - \bar{x} \rangle +\gamma_{0} \langle v, y - \bar{x} \rangle$ where $\bar{\mu}=\frac{1}{2}\frac{\mu M}{\rho}.\rho.\frac{\gamma}{M}=\frac{1}{2}\mu \gamma$.
   \end{proof}



\section{Conclusions}\label{sec:conclusion}
Our KKT and FJ type optimality conditions confirm that the strong subdifferential is a good tool when we deal with strongly quasiconvex functions, as noted in \cite{Kab-Lara-2} (see also \cite{ChLM,LMT}).
The proposed optimality conditions are established under mild assumptions and do not rely on classical convexity, which broadens their applicability. In particular, the FJ conditions allow us to handle situations where standard constraint qualifications may fail, while the KKT conditions offer sharper characterizations when suitable regularity assumptions are satisfied.

These findings contribute to a deeper understanding of the structure of strongly quasiconvex problems and open the door to further developments. Possible directions for future research include the study of algorithmic consequences of the proposed conditions, extensions to vector and multiobjective optimization, and applications to nonsmooth problems arising in applied mathematics and engineering.
\backmatter

\section{Declarations}



\subsection{Availability of supporting data}
No data sets were generated during the current study. 

\subsection{Author Contributions}
Both authors contributed equally to the study conception, design, wrote and corrected the manuscript.

\subsection{Competing Interests}
There are no conflicts of interest or competing interests related to this manuscript.

\subsection{Funding}
This research was partially supported by UTA research project Fortalecimiento de Gru\-pos de Investigaci\'on C\'odigo 8802-25, by ANID-Chile under Fondecyt Regular 1241040 (Lara) and by Fondecyt Regular 1231188 (Ramos). \newline

{\bf Acknowledgements.} The authors wishes to thank the reviewers for the comments and remarks that improved
the quality of this paper.


\begin{thebibliography}{9}       


\bibitem{akktconic}
 \textsc{R. Andreani, W. Gómez, G. Haeser, L. M. Mito, A. Ramos}, On optimality conditions for nonlinear conic programming, {\it Math. Oper. Res.}, \textbf{47 (3)}, 2160--2185, (2022).





\bibitem{AE}
 \textsc{K. J. Arrow, A. C. Enthoven}, Quasiconcave programming, {\it Econo\-me\-tri\-ca}, \textbf{29}, 779--800, (1961).

\bibitem{AusselSVVA2015}
 \textsc{D. Aussel, M. Pistek}, Limiting normal operator in quasiconvex analysis, {\it Set-Valued Var. Anal.}, \textbf{23}, 669--685, (2015).
 
\bibitem{ADSZ}
 {\sc M. Avriel, W. E. Diewert, S. Schaible, I. Zang}. ``Generalized Concavity''. SIAM, Philadelphia, (2010).

 
\bibitem{borweinnecessary}
 {\sc J. M. Borwein, J. S. Treiman, Q. J. Zhu}, Necessary conditions for constrained optimization problems with semicontinuous and continuous data, {\it Trans. Amer. Math. Soc.}, {\bf 350 (6)}, 2409--2429, (1998).
 

\bibitem{CM-Book}
 {\sc A. Cambini, L. Martein}. ``Generalized Convexity and Optimization: Theo\-ry and Applications''. Springer, (2009).

\bibitem{ChLM}
 {\sc J. Choque, F. Lara, R. T. Marcavillaca}, A subgradient projection method for quasiconvex minimization. {\it Positivity}, Vol. 28, Issue 5, Paper 64, (2024). 

\bibitem{appropriatesubqc}
 {\sc A. Daniilidis, N. Hadjisavvas, J. E. Mart\'inez-Legaz}, An appropriate subdifferential for quasiconvex functions, {\it SIAM J. Optim.}, {\bf 12}, 407--420, (2001).

 \bibitem{D-1959}
 \textsc{G. Debreu}. ``Theory of value". John Wiley, New York,
 (1959).



\bibitem{FS}
{\sc J. Frenk, S. Schaible}, Fractional programming. In: N. Hadjisavvas et al. (eds.): ``Handbook of Generalized Convexity and Generalized Monotonicity". pp. 335--386. Springer-Verlag, Boston, (2005).

\bibitem{gejiangyecomplexitylp}
 {\sc D. Ge and X. Jiang and Y. Ye}, A note on complexity of Lp minimization. {\it Math. Programm.}, {\bf 129}, 285--299, (2011).
 
\bibitem{HKS}
 {\sc N. Hadjisavvas, S. Komlosi, S. Schaible}. ``Handbook of Generalized Convexity and Generalized Monotonicity''. Springer-Verlag,
 Boston, (2005).

\bibitem{IL-7}
 {\sc A. Iusem, F. Lara}, Proximal point algorithms for quasiconvex pseudomonotone equilibrium problems. {\it J. Optim. Theory Appl.}, {\bf 193}, 443--461, (2022). 

 \bibitem{ILMY}
 {\sc A. Iusem, F. Lara, R. T. Marcavillaca, L.H. Yen}, A two-step PPA for nonconvex equilibrium problems with applications to fractional programming, {\it J. Global Optim.}, {\bf 90}, 755--779, (2024).

\bibitem{Kab-Lara-2}
 {\sc A. Kabgani, F. Lara}, Strong subdifferentials: theory and applications in nonconvex optimization, {\it J. Global Optim.}, {\bf 84}, 349--368, (2022).

\bibitem{KS} 
 {\sc S. Karamardian, S. Schaible}, Seven kinds of monotone maps, {\it J. Optim. Theory Appl.}, {\bf 66}, 37--46, (1990).  

 
\bibitem{Lara-9}
 {\sc F. Lara}, On strongly quasiconvex functions: existence results and proximal point algorithms, {\it J. Optim. Theory Appl.}, {\bf 192}, 891--911, (2022).

\bibitem{LMT}
 {\sc F. Lara, R. T. Marcavillaca, T. V. Thang}, A subgradient projection method for quasiconvex multiobjetive optimization, {\it Optimization, DOI:  10.1080/02331934.2024.2436577}, (2025). 

\bibitem{LMV}
 {\sc F. Lara, R. T. Marcavillaca, P. T. Vuong}, Characterization, dynamical systems and gradient methods for strongly quasiconvex functions, {\it J. Optim. Theory Appl.}, {\bf 206}, article number 60, (2025).  

\bibitem{LMY}
 {\sc F. Lara, R. T. Marcavillaca, L. H. Yen}, An extragradient projection method for strongly quasiconvex equilibrium problems with applications. {\it Comp. Appl. Math.}, {\bf 43}, Issue 3, N.º 128, 21 pp. (2024). 


\bibitem{Manga}
 {\sc O. L. Mangasarian}. ``Nonlinear Programming''. SIAM, Classics in 
 Applied Mathematics, Philadelphia, (1994).

\bibitem{MWG}
 {\sc A. Mas-Colell, M.D. Whinston, J.R. Green}, {\it Microeconomic Theory}. Oxford University Press, Oxford, (1995). 


 \bibitem{NS} 
  {\sc N. M. Nam, J. Sharkasky}, On strong quasiconvexity of functions in infinite dimensions, {\it Optim. Letters, DOI: 10.1007/s11590-025-02261-x}, (2025). 

\bibitem{Pi} 
{\sc N. Pischke}, On the proximal point algorithm for strongly quasiconvex functions in Hadamard spaces, {\it Optim. Methods $\&$ Software, DOI: 10.1080/10556788.2025.2531481}, (2025). 

\bibitem{P}
 \textsc{B. T. Polyak}, Existence theorems and convergence of minimizing
 sequences in extremum problems with restrictions, \textit{Soviet Math.},
 \textbf{7}, 72--75, (1966). 

 \bibitem{rwets}
 \textsc{R. T. Rockafellar, R. Wets}, Variational Analysis, \textit{Series: Grundlehren der mathematischen Wissenschaften}, \textbf{317}, German, (2009).


\bibitem{suzuki}
  {\sc S. Suzuki}. 
  ``Karush-Kuhn-Tucker type optimality condition for quasiconvex programming in terms of the Greenberg-Pierskalla subdifferential". 
  \textit{J. Global Optim.}, \textbf{79}, 191–-202, (2021).

\bibitem{Tran}
 {\sc V. N. Tran}, A relaxed proximal point algorithm with double-inertial effects for nonconvex equilibrium problems, arXiv preprint, arXiv: 2502.10986, (2025).

\bibitem{yaoeuro}
  {\sc P. Q. Khanh, H. T. Quyen, J-C. Yao}. 
  ``Optimality conditions under relaxed quasiconvexity assumptions using star and adjusted subdifferentials", \textit{European J. Oper. Res.}, \textbf{212}, 235--241, (2011).
\end{thebibliography}

\end{document}